\documentclass[oneside,notitlepage,12pt]{article}

\usepackage{amssymb}
\usepackage[leqno]{amsmath}
\usepackage{amsfonts}
\usepackage{amsopn}
\usepackage{amstext}
\usepackage{amsthm}
\usepackage[normalem]{ulem}

\usepackage{tikz}
\usetikzlibrary{arrows.meta}
\usetikzlibrary{graphs}
\usetikzlibrary{decorations.pathmorphing}

\usepackage{enumitem}

\usepackage{verbatim}
\usepackage[colorlinks, backref]{hyperref}

\usepackage{mathrsfs}
\usepackage{calrsfs}
\usepackage{clock} 

\usepackage{marvosym} 

\setlist{itemsep = 0pt}
\setlist[enumerate, 1]{label=\upshape (\arabic*), ref=(\arabic*)}

\newtheorem{tw}{Theorem}[section]
\newtheorem{wn}[tw]{Corollary}
\newtheorem{lm}[tw]{Lemma}
\newtheorem{prop}[tw]{Proposition}

\theoremstyle{definition}
\newtheorem{df}[tw]{Definition}
\newtheorem{ex}[tw]{Example}

\newtheorem{question}[tw]{Question}
\theoremstyle{remark}
\newtheorem{uwgi}[tw]{Remark}
\newtheorem{observation}[tw]{Observation}

\newcommand \set [1]{\{#1\}}
\newcommand \seq [1]{\langle #1 \rangle}
\newcommand \map [3]{#1\colon #2 \to #3} 
\newcommand \maps {\colon} 
\newcommand \im [1]{[#1]} 
\newcommand \preim [1]{^{-1}[#1]} 
\newcommand \fiber [1] {^{-1}(#1)}
\newcommand \from {\leftarrow} 
\newcommand \restr [1] {\mathord{\upharpoonright}_{#1}}
\newcommand \id [1]{{\operatorname{id}_{#1}}}
\newcommand \dom {\operatorname{dom}}
\newcommand \rng {\operatorname{rng}}

\newcommand \cmp {\circ} 
\newcommand \subs {\subseteq}
\newcommand \sups {\supseteq}
\renewcommand \iff {\Longleftrightarrow}
\newcommand \meet {\wedge}

\newcommand \abs [1] {\lvert#1\rvert}
\newcommand \card [1] {\lvert#1\rvert} 
\newcommand \clo [1] {\overline{#1}} 

\newcommand \diam {\operatorname{diam}}
\newcommand \supp {\operatorname{supp}}

\newcommand \sig {\sigma}
\newcommand \eps {\varepsilon}

\newcommand \N {\mathbb{N}}
\newcommand \Z {\mathbb{Z}}
\newcommand \Q {\mathbb{Q}}
\newcommand \R {\mathbb{R}}
\newcommand \Qpos {\Q^{\geq 0}}
\newcommand \Rpos {\R^{\geq 0}}
\let \qplus \Qpos

\newcommand \U {\mathbb{U}}

\newcommand \C {\mathscr{C}}
\newcommand \D {\mathscr{D}}
\newcommand \K {\mathscr{K}}
\newcommand \T {\mathscr{T}}

\newcommand \fin {{\operatorname{fin}}}

\newcommand \Lin {\mathfrak{L}}
\newcommand \Ult {\mathfrak{U}}
\newcommand \UltIso {\mathfrak{I}}
\newcommand \UltConv {\Ult^\prec}
\newcommand \IsoConv {\mathfrak{C}} 
\newcommand \Tree {\mathfrak{T}}
\newcommand \LevTree {\mathfrak{TL}}
\newcommand \ATree {\mathfrak{A}}
\newcommand \Ball {\mathfrak{B}}

\newcommand \ultras {\mathfrak U}
\newcommand \ultraf {\ultras_{\operatorname{fin}}}

\newcommand \ultracon {\ultras^\prec}
\newcommand \ultraconf {\ultracon_{\operatorname{fin}}}

\newcommand \Aut {\operatorname{Aut}}
\newcommand \Iso {\operatorname{Iso}}
\newcommand \AutM {\operatorname{Aut_m}}
\newcommand \IsoM {\operatorname{Iso_m}}
\let \aut \Aut

\newcommand \lev {\ell}
\newcommand \Lev {\operatorname{Lev}}

\newcommand \Bee {\mathscr{B}}
\newcommand \Beebar {\mkern3mu\overline{\mkern-3mu\Bee}}
\newcommand \Max {\operatorname{Max}}

\newcommand \monster {(\UltIso^{\Qpos}_{\rm prec})_\fin}
\newcommand \gen {{\operatorname{gen}}}

\newcommand \email [1] {%
    \quad{\Letter\,\small\href{mailto:#1}{\nolinkurl{#1}}}%
}

\newcommand \arxivlink [1]{%
    \href{https://arxiv.org/abs/#1}{\texttt{arXiv:#1}}%
}

\hypersetup{
	pdftitle = {Universal homogeneous two-sorted ultrametric spaces},
	pdfauthor = {Adam Bartoš, Wiesław Kubiś, Aleksandra Kwiatkowska, Maciej Malicki},
	colorlinks,
	linkcolor = [rgb]{0, 0, 0.7},
	citecolor = [rgb]{0, 0.7, 0},
	urlcolor = [rgb]{0.4, 0.2, 0},
	linktoc = section,
}

\title{Universal homogeneous \\ two-sorted ultrametric spaces}
\author{
{\sc Adam Barto\v{s}}
    \email{bartos@math.cas.cz}\\
    {\small Institute of Mathematics, Czech Academy of Sciences (CZECHIA)}
\and
{\sc Wies{\l}aw Kubi\'s}
    \email{kubis@math.cas.cz}\\
    {\small Institute of Mathematics, Czech Academy of Sciences (CZECHIA)}
\and
{\sc Aleksandra Kwiatkowska}
    \email{aleksandra.kwiatkowska@math.uni.wroc.pl}\\
    {\small University of Wrocław (POLAND) and  University of M\"unster (GERMANY)}
\and
{\sc Maciej Malicki}
    \email{mmalicki@mimuw.edu.pl}\\
    {\small Faculty of Mathematics, Informatics and Mechanics, University of Warsaw (POLAND)}
}

\date{\clocktime\today}

\begin{document}

\maketitle

\begin{abstract}
    We view ultrametric spaces as two-sorted structures consisting of a set of points and of a linearly ordered set of distances.
    We call the appropriate notion of embeddings distance-carrying (dc- for short). Those are obtained by combining isometries and linear order embeddings.
    We show that the class of all finite two-sorted ultrametric spaces with dc-embeddings is Fraïssé, and that the limit is the countable rational Urysohn ultrametric space $\U$.
    The space $\U$ is dc-universal for all countable ultrametric spaces, and its Cauchy completion $\clo{\U}$ is dc-universal for all separable ultrametric spaces, which is in contrast with the situation of classical ultrametric spaces and isometric embeddings, where no such universal space can exist.
    
    We study further properties of $\U$, of its variants, and of its automorphism group, which is richer than its group of isometries. In particular, we provide two types of tree representations of the two-sorted  ultrametric spaces, discuss connections to valued fields, and characterize the automorphism group of $\U$ as the semidirect product of a group of order preserving bijections and a group of isometries.
    Furthermore, we show universality of $\Aut(\U)$ and identify its universal minimal flow.
\end{abstract}


\paragraph{Keywords:}
ultrametric spaces, representations, universality, Fraïssé theory, automorphism groups

\vspace{-2ex}
\paragraph{Mathematics Subject Classification (2020): }
54E35, 
03C50, 
20B27, 
18A22  

\vspace{-2ex}
\paragraph{Acknowledgments.} 
Research of Adam Bartoš and Wiesław Kubiś was supported by GAČR (Czech Science Foundation) grant EXPRO 20-31529X and by the Czech Academy of Sciences (RVO 67985840). Research of Aleksandra Kwiatkowska was supported by DFG (German Research Foundation) under Germany’s Excellence Strategy EXC 2044–390685587, Mathematics M\"{u}nster: Dynamics–Geometry–Structure. Research of Maciej Malicki was partially supported by the National Science Centre, Poland
under the Weave-UNISONO call in the Weave programme [grant no
2021/03/Y/ST1/00072].


\tableofcontents

\section{Introduction}

A metric space $(X,d)$ is called an {\em ultrametric space} if for all $x,y,z\in X$, the ultrametric triangle inequality holds: $d(x,y)\leq\max\{d(x,z), d(y,z)\}$. 
Consequently, for any $x,y,z\in X$, either $d(x,y)=d(x,z)= d(y,z)$ (the triangle is equilateral), 
or, if we assume without loss of generality that $d(x,y)$ is the minimum, it holds that $d(x,y)<d(x,z)= d(y,z)$.

The {\em (open) ball} of center $x\in X$ and radius $r>0$ in an ultrametric space is the set $B_r(x)=\{y\in X\colon d(y,x)<r\}$. Analogously, we define the \emph{closed ball} as $\bar{B}_r(x)=\{y\in X\colon d(y,x)\leq r\}$. Note that for any two balls $B$ and $C$, they are either disjoint ($B\cap C=\emptyset$) or one is contained in the other ($B\subseteq C$ or $C\subseteq B$). Moreover, every point in a ball can serve as its center.

\begin{ex}
\begin{enumerate}
\item The Baire space $\N^\N=\{x\colon \N\to \N\}$ equipped with the metric $d(x,y)=\max\{2^{-n}\colon x(n)\neq y(n)\}$, for $x\neq y$, is an ultrametric space.
\item A valued field (with a countable value group) gives rise to an ultrametric space. For example, consider the $p$-adic numbers $\Q_p$, where $p$ is a prime. Any rational number $x\neq 0$ can be uniquely represented as $\frac{p^k r}{s}$, where $k,r,s\in \Z$, $s > 0$, and $r,s$ are not divisible by $p$. We define the norm $|x|_p=p^{-k}$ and the distance $d(x,y)=|x-y|_p$, letting $\Q_p$ be the completion of the rationals with respect to this metric $d$.
\end{enumerate}
\end{ex}

We are interested in \emph{universal} ultrametric spaces. In the context of all metric spaces, there exists a separable metric space into which every separable metric space embeds isometrically.
Standard examples include the space of continuous real-valued functions $C([0,1])$ equipped with the supremum norm or the Urysohn metric space, the latter being also \emph{(ultra)homogeneous}, i.e. every isometry between its finite subspaces extends to an isometry of the whole space.

For ultrametric spaces, however, the situation is diametrically different: there is no separable ultrametric space that is universal for all separable, or even just finite, ultrametric spaces. In 1994, Vestfrid~\cite{V} constructed an ultrametric space universal for all separable ultrametric spaces, but that space is not separable. It is defined as
\[ U=\{(x_n)\in (\Rpos)^\mathbb{N}\colon x_{n+1}\leq x_n \text{ for all } n
\text{ and } (x_n) \text{ converges to } 0 \}, \]
where for $x\neq y$, we let $d( x, y)= \max\{x_k,y_k\}$, with $k$ being the least index such that $x_k\neq y_k$.

Furthermore, A. J. Lemin and  V. A. Lemin \cite{LL}  proved that there is no ultrametric space of weight $<\mathfrak{c}$ into which every separable ultrametric space embeds isometrically. The authors also provided an example of a space with weight $\mathfrak{c}$:
$$ L_\N=\{f\colon \Q^{> 0}\to \N\colon f {\rm{ \ is\ eventually\ }} 0\}$$ 
equipped with the distance
\[ \textstyle
    d(f,g)=\sup_{\Rpos}\{x\in \Q^{> 0}\colon f(x)\neq g(x)\}
\]
for $f\neq g$. The space $L_\N$ is complete and universal for ultrametric spaces of cardinality $<\mathfrak{c}$. 
Let us just mention that replacing $\N$ with a cardinal number $\kappa$ in the definition above,
Lemins obtained the space $L_\kappa$, which is universal for all ultrametric spaces of weight $\leq \kappa$, and  $\card{L_\kappa}=w(L_\kappa)=\kappa^{\aleph_0}$.

If we restrict ourselves to a countable set of distances $0 \in E \subseteq \Rpos$, a universal space does exist. In that case,
\[
    U_E =\{(x_n)\in E^\mathbb{N}\colon x_{n+1}\leq x_n \text{ for all } n
    \text{ and } (x_n) \text{ converges to } 0 \}
\]
equipped with $d(x, y)= \max\{x_k,y_k\}$, with $k$ being the least index such that $x_k\neq y_k$, for $x\neq y$,
is a separable, complete $E$-valued ultrametric space into which all $E$-valued ultrametric spaces embed isometrically, as shown in the work of Gao and Shao \cite{GS}. This construction was inspired by that of Vestfrid.
All these examples admit a unified treatment, as sketched in Remark~\ref{rmk:representations}.

The reason why there is no universal separable ultrametric space $X$ lies in too strict treatment of distances: for every countable dense subspace $D \subseteq X$ and two points $x \neq y \in X$ there are points $x', y' \in D$ with $d(x, x'), d(y, y') < d(x, y)$.
Hence, by the ultrametric triangle inequality we have $d(x, y) = d(x, y') = d(x', y')$, and so $d(x, y)$ is one of the countably many distances attained by $D$.
Therefore, $X$ does not contain even all two-point spaces, and if we wish to find a universal separable ultrametric space, we must disregard exact numerical values for distances.

We work with ultrametric spaces viewed as two-sorted structures where the metric takes values in a linearly ordered set of distances.
We also consider appropriate two-sorted embeddings, called \emph{dc-embeddings}, preserving the orders of the distance sets.
Using Fraïssé theory, we will construct such a two-sorted ultrametric space. The Fraïssé limit $\U$ will be countable and universal for all countable two-sorted ultrametric spaces.
Furthermore, since a (uniformly continuous) dc-embedding from one ultrametric space to another extends uniquely to a dc-embedding between their Cauchy completions, the completion $\clo{\U}$ will be universal for all separable two-sorted ultrametric spaces.
More broadly, in the paper we embark on a systematic study of two-sorted ultrametric spaces, ways of representing them, methods of constructing universal objects, and properties of their automorphism groups.

In a recent related work by Gheysens, Pavlica, Pech, Pech, and Schneider~\cite{GPPPS_Echeloned}, the authors consider structures with a quaternary relation expressing when points in a pair are closer to each other than points in another pair, without explicitly referring to any distances.
They call such structures \emph{echeloned spaces}, and among other results prove that finite echeloned spaces form a Fraïssé class, that their order expansions form a Ramsey class, and that the automorphism group of the Fraïssé limit is universal (via a Katětov functor).
Even though echeloned spaces are substantially different from ultrametric spaces, some of the results are parallel to ours.
We compare both structures in more detail in Remark~\ref{rmk:echeloned}.

In a follow-up paper \cite{paper2} we prove the existence of a generic dc-automorphism of~$\U$ and the non-existence of a generic pair of dc-automorphisms. We achieve the former result by showing cofinal amalgamation property of partial automorphisms and characterizing amalgamation bases. Moreover, we develop a general strategy for showing cofinal amalgamation property for a broad class of categories. Additionally, we carry out a detailed analysis of possible structures of an orbit of a point under a dc-automorphism.

\subsection{Summary of the results}

Initially motivated by the study of universal ultrametric spaces, this work provides a construction of a generic universal ultrametric space and  investigates  its completions and their automorphism groups.
Parallel to these results, we develop several tools of independent interest such as principles for transferring Fraïssé-theoretic properties between categories, and representations of ultrametric spaces via certain categories of trees and ball spaces and via spaces of functions.
While this is partially present in the mathematical folklore, we significantly extend the tools and provide a systematic and unified treatment.
We now summarize the variety of obtained results for the reader's convenience.

Section~\ref{Sec:prelim} establishes the general framework for studying two-sorted ultrametric spaces, including two ways to represent them as trees, and for studying their automorphism groups. In Section~\ref{Sec:Fraisse} we construct the universal homogeneous two-sorted ultrametric space $\U$ and several related Fraïssé limits, and we investigate its completions. Section~\ref{Sec:automorphism} compares the automorphism group $\Aut(\U)$ with the isometry group $\Iso(\U)$ and discusses universality of $\Aut(\U)$ via Katětov functors.

\medskip\noindent
\textbf{Preliminaries (Section~\ref{Sec:prelim}).}
We introduce all key notions, present our setup, and collect general observations for later use.

\begin{itemize}
    \item Despite working with concrete possibly many-sorted structures, we use a basic category-theoretic framework to systematically organize and compare classes of structures we are interested in.
    This includes notation for induced categories and of finite structures and of partial automorphisms.
    
    \item Ultrametric spaces are introduced as two-sorted structures, where the first sort consists of points and the second of distances in a linear order. Maps, called dc-embeddings, are not necessarily isometric embeddings, we only require that they preserve the order between distances.
    This does not always imply (uniform) continuity.
    We provide a characterization (Lemma~\ref{thm:dc-continuity}).
    
    \item The Cauchy completion $\clo{X}$ of a two-sorted ultrametric space $X$ behaves as expected, and in particular has the universal property for uniformly continuous dc-embeddings (Observation~\ref{exttoCauchy}).

    \item For any ultrametric space $X$, we endow $\Aut(X)$ with two topologies of pointwise convergence depending on whether we view $X$ as a discrete or metric structure -- the latter is denoted by $\AutM(X)$.
    If $X$ is separable without isolated points, then the adjacent balls functor $\Bee$ (Proposition~\ref{prop:thebee}) induces an isomorphism of Polish groups $\AutM(\clo{X}) \to \Aut(\Bee_X)$ (Proposition~\ref{prop: beta}).
\end{itemize}

\medskip\noindent
\textbf{Generic ultrametric spaces (Section~\ref{Sec:Fraisse}).} We construct the universal homogeneous two-sorted ultrametric space $\U$ as a Fraïssé limit of the class of finite two-sorted ultrametric spaces.

\begin{itemize}
\item We establish transfer principles (Propositions~\ref{thm:fin_transfer} and~\ref{thm:Flim_transfer}) that allow us to transfer
Fraïssé-theoretic properties between different categories of structures, which we then apply to several classes related to ultrametric spaces.

\item We show that the class $\Ult_\fin$ of all finite two-sorted ultrametric spaces is a Fraïssé class (Lemma~\ref{thm:one_point_amalgamation}). Its Fraïssé limit $\U$ is dc-isomorphic to $\U_\Q$, the countable rational Urysohn ultrametric space, and so is both dc-homogeneous and iso-homogeneous (Theorem~\ref{thm:U_Fraisse}).

\item The countable rational Urysohn ultrametric space $\U$ is universal for countable ultrametric spaces even with respect to uniformly continuous dc-embeddings (Proposition~\ref{thm:universal_separable}), and therefore its Cauchy completion $\clo{\U}$ is universal for separable ultrametric spaces (Corollary~\ref{wn:universal u bar}).
\item 
    The class $\Ball_\fin$ of all finite leveled adjacency trees is Fraïssé, and its limits is the open ball structure $\Bee_\U$ of the space $\U$ (Proposition~\ref{thm:ball_Fraisse_limit}).
    Moreover, the Polish group $\Aut(\Bee_\U)$ is isomorphic to $\AutM(\clo{\U})$ (Proposition~\ref{thm:ball_automorphism_group}).

\item We study convexly ordered ultrametric spaces, i.e. linearly ordered in a way that each ball is convex. The resulting class $\Ult^\prec_\fin$ is a  Fraïssé class whose limit is $\U$ endowed with the generic convex order (Theorem~\ref{thm:CU_Fraisse}).

\item For $M \subseteq \N^{\geq 2} \cup \{\infty\}$, we consider ultrametric spaces where  sizes of maximal equilateral sets are bounded by elements of $M$. The resulting class $\Ult^M_\fin$ is Fraïssé and its limit $\U^M$ is again both dc-homogeneous and iso-homogeneous (Theorem~\ref{thm:MU_Fraisse}).

    \item We provide concrete representations of $\U$ and of its Cauchy and spherical completions (Section~\ref{section:concrete}), and we summarize their homogeneity properties (Proposition~\ref{thm:concrere}).
    The concrete representation of $\U$ will also give us a section for the canonical projection $\pi\maps \Aut(\U) \to \Aut(\Qpos)$, and similarly for the completions (Proposition~\ref{thm:section}).
 
\item  We discuss connections to valued fields, 
    in particular which of the concrete representations admit a natural field structure (Corollary~\ref{thm:correspondeces}).
    We conclude that $\U$ is the ultrametric reduct of every valued field of the form $k(t^\Q)$ for a countable field $k$ (Corollary~\ref{wn:Urysohn reduct}).
  
\end{itemize}

\medskip\noindent
\textbf{Automorphism groups (Section~\ref{Sec:automorphism}).} 
We investigate the relationship between the automorphism group $\Aut(\U)$, the isometry group $\Iso(\U)$, and the automorphism group $\Aut(\Qpos)$ of the ordered set of non-negative rationals.

\begin{itemize}
    \item We realize the dc-automorphism groups as topological semidirect products, namely $\Aut(\U) \cong \Iso(\U) \rtimes \Aut(\Qpos)$ and  $\AutM(\clo{\U}) \cong \IsoM(\clo{\U}) \rtimes \Aut(\Qpos)$ (Theorem~\ref{thm:semidirect}).
    
    \item Using results of Jahel--Zucker for short exact sequences, we establish extreme amenability of $\aut(\U^\prec)$ and amenability of $\aut(\U)$ (Theorem \ref{examen}).
    Furthermore, using work of Kechris--Pestov--Todorčević, we describe the universal minimal flow of $\aut(\U)$ (Theorem \ref{tw: umf-desrip}).
    
    \item Finite ultrametric spaces admit a Katětov functor (Proposition~\ref{thm:Katetov}), and consequently $\Aut(\U)$ is a topologically universal automorphism group for countable ultrametric spaces (Corollary~\ref{wn-katetov}).
    Universality of $\AutM(\clo{\U})$ for automorphism groups of separable ultrametric spaces remains open (Question~\ref{que:complete_aut_universal}). 
\end{itemize}

\section{Preliminaries}\label{Sec:prelim}

\subsection{General framework}

As we shall discuss several different (but related) types of structures as well as several types of structure-preserving maps, we shall adopt the language of category theory to treat the matter in a systematic way. 
Recall that a category $\C$ consists of a family of objects (sometimes called $\C$-objects) and of a composable family of morphisms (sometimes called arrows or $\C$-maps),
i.e. for every $\C$-maps between $\C$-objects $f\maps X \to Y$ and $g\maps Y \to Z$, we have the composition $g \cmp f\maps X \to Z$.
The composition is associative.
We also have the identity morphism $\id{X}\maps X \to X$ for every $\C$-object $X$ that is neutral with respect to composition.

Recall that a \emph{subcategory} $\D \subseteq \C$ is a category consisting of some $\C$-objects and some $\C$-maps between them, with the same composition.
A subcategory $\D \subseteq \C$ is called \emph{full} if we restrict only objects, i.e. if every $\C$-map between $\D$-objects is a $\D$-map, and a subcategory $\D \subseteq \C$ is called \emph{wide} if it has the same objects as $\C$, i.e. if we are restricting only the morphisms.
We will use some other standard category-theoretic notions such as functors, natural transformations, equivalences of categories, and colimits. See a standard reference such as \cite{MacLane} for details.

Our categories consist of model-theoretic structures in a fixed signature (sometimes many-sorted, but we use only finitely many sorts), and of all embeddings.
We call them \emph{categories of structures}.
For every category of structures $\C$, the full subcategory of all finite structures is denoted by $\C_\fin$.
Also, there is a notion of a substructure, i.e. of an embedding that is an inclusion on the level of sets.
The notation is $X \leq Y$, meaning that $X$ is a substructure of $Y$ and that $Y$ is an extension of $X$.
Arguments are sometimes simplified by replacing embeddings with substructures. This is without loss of generality because, up to isomorphism, every embedding is an inclusion.

Recall that in general an {\it isomorphism} in a category $\C$ is a $\C$-map $f\maps X \to Y$ such that there is $\C$-map $f^{-1}\maps Y \to X$ satisfying $f^{-1} \cmp f = \id{X}$ and $f \cmp f^{-1} = \id{Y}$.
In a category of structures, it is equivalently a $\C$-map that is bijective (in every sort) as $\C$-maps are embeddings.
Isomorphisms $X \to X$ are called {\it automorphisms}.
For every $\C$-object $X$ the automorphism group of $X$ is denoted by $\Aut(X)$.

\begin{uwgi} \label{rem:many-sorted}
    Some general results we will use, such as the characterization of existence of a generic automorphism due to Kerchis--Rosendal~\cite{KR} or the Kechris--Pestov--Todorčević correspondence~\cite{KPT}, are stated for classes of one-sorted structures (with both functions and relations).
    This is not a problem for us as a many-sorted structure can be turned into a one-sorted structure while preserving the notion of embedding, and hence any category of structures is equivalent to a category of one-sorted structures.
\end{uwgi}

Let $\C$ be a category of finite structures.
For every $\C$-object $A$ a \emph{partial automorphism} of $A$ is a $\C$-isomorphism $p$ such that $\dom(p), \rng(p) \leq A$, i.e. an isomorphism between substructures of $A$.
An embedding of partial automorphisms $f\maps \seq{A,p} \to \seq{B,q}$ is a $\C$-map $f\maps A \to B$ such that $f \cmp p = q \cmp f$ on $\dom(p)$, from which it follows that $f$ restricts to (unique) $\C$-maps $\dom(p) \to \dom(q)$ and $\rng(p) \to \rng(q)$.
We denote the category of partial automorphisms in $\C$ and their embeddings by $\C^*$.
We can view $\C^*$ as a category of structures. Indeed, if structures in $\C$ are one-sorted, we turn $\seq{A,p}$ into a three-sorted structure with sorts $A,\dom(p),\rng(p)$, and we add the embeddings  $\dom(p)\to A$ and $\rng(p) \to A$ as well as both $p$ and $p^{-1}$ as operations. If $\C$ is many-sorted, we do this for every sort.

\subsection{Two-sorted ultrametric spaces}\label{prel:2sort}

By $\Lin$ we denote the category of all linear orders $\seq{L, \leq_L}$ and all embeddings, i.e. maps $f\maps L \to L'$ such that $x \leq_L y$ if and only if $f(x) \leq_{L'} f(y)$.
It is well-known that $\Lin_\fin$ is a Fraïssé class, whose limit is the linearly ordered set of rationals $\Q$.
(We recall Fraïssé theory in Section~\ref{Sec:Fraisse}.)

We shall consider ultrametric spaces with linear orders as sets of distances.
First, let $\Lin^0$ denote the category of linear orders $\seq{L, \leq_L, 0_L}$ with the least element $0_L$ and of embeddings of linear orders preserving the least element.
As usual, the triple $\seq{L, \leq_L, 0_L}$ is often denoted just by $L$, and $\leq_L$ and $0_L$ are often denoted just by $\leq$ and $0$, respectively.
Clearly, $\Lin^0$ is a category equivalent to $\Lin$.

We propose to consider ultrametric spaces as two-sorted structures of the form $\seq{X, d_X, D_X}$,
where $X$ is a set of points, $D_X$ is a set of distances, i.e. an $\Lin^0$-object, and $\map{d_X}{X \times X}{D_X}$ is an ultrametric.
To be more specific, $d = d_X$ is symmetric, $d(x, x') = 0$ if and only if $x = x'$, and $d$ satisfies the ultrametric triangle inequality 
\[
    d(x, x'') \leq \max\set{d(x,x'), d(x',x'')} \qquad \text{for $x,x',x'' \in X$.}
\]
Again, we shall denote $d_X$ by $d$ and $\seq{X, d_X, D_X}$ by $\seq{X, D_X}$ or just $X$ whenever convenient.
We shall refer to the two sorts as the \emph{point sort} and the \emph{distance sort}, respectively, when convenient.

A \emph{distance-carrying embedding} $f\maps \seq{X, d_X, D_X} \to \seq{Y, d_Y, D_Y}$ (\emph{dc-embedding} for short) is a one-to-one map $f\maps X \to Y$ together with an $\Lin^0$-map $D_f\maps D_X \to D_Y$ such that
\[
    d_Y(f(x), f(x')) = D_f(d_X(x, x')) \qquad \text{for } x, x' \in X.
\]
Note that an \emph{isometric embedding} corresponds to a dc-embedding $f\maps X \to Y$ such that $D_f$ is the inclusion $D_X \leq D_Y$, i.e. we have $d_Y(f(x), f(x')) = d_X(x, x')$.

Let $\Ult$ denote the category of ultrametric spaces and distance-carrying embeddings.
Clearly the assignment $(f\maps X \to Y) \mapsto (D_f\maps D_X \to D_Y)$ yields a functor $D\maps \Ult \to \Lin^0$ with the restriction $D_\fin \maps \Ult_\fin \to \Lin^0_\fin$ (which we may denote just by $D$).
Note that a $\Ult_\fin$-object is an ultrametric space $X$ such that both the set of points $X$ and the set of distances $D_X$ are finite.

Let $\UltIso^E \subs \Ult$ for $E \in \Lin^0$ denote the subcategory of all ultrametric spaces $X$ with $D_X \leq E$ and all isometric embeddings.
We say that an ultrametric space $X$ is \emph{precise} if $d_X\maps X \times X \to D_X$ is surjective, i.e. every distance is achieved.
By $\UltIso^E_{\rm prec} \subs \UltIso^E$
  we mean the full subcategory consisting of precise spaces. 
In this view, the category of classical ultrametric spaces is just $\UltIso^{\Rpos}_{\rm prec}$.

Since every ultrametric space $X$ can be viewed both as an $\Ult$-object and an $\UltIso^{D_X}$-object, we make a convention that $\Aut(X)$ denotes the dc-automorphism group, while $\Iso(X)$ denotes the isometry group.

A {\it precise partial dc-automorphism} $p$ of $A$ is a  partial automorphism such that all $\dom(p), \rng(p) \leq A$ are precise.
Then for every $r \in \dom(D_p)$ we have $D_p(r) = D_p(d(x, y)) = d(p(x), p(y))$ for some $x, y \in A$, i.e.  we can ignore the distance part of $D_p$.

\medskip

By the next lemma, we can often consider only precise ultrametric spaces without loss of generality as they form a cofinal subfamily in $\Ult_\fin$.

\begin{lm}\label{precise}
For any space $X \in \Ult_\fin$ there is a precise space $Y \in \Ult_\fin$ with $X \leq Y$ and $D_X = D_Y$.
\end{lm}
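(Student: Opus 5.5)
We add new points, one for each distance that $X$ does not attain, and keep the distance set unchanged. Let $R = D_X \setminus d_X[X \times X]$ be the set of missing distances; it is finite. We may assume $0 \in d_X[X\times X]$ whenever $X \neq \emptyset$. If $X = \emptyset$, first add a single point, which attains $0$. So fix a point $x_0 \in X$.

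For each $r \in R$ take a new point $y_r$, and let $Y = X \cup \set{y_r : r \in R}$ with $D_Y = D_X$. Define the distances in the style of a rooted tree. For $x \in X$ put $d_Y(y_r, x) = \max\set{r, d_X(x_0, x)}$. For $r \neq s$ in $R$ put $d_Y(y_r, y_s) = \max\set{r,s}$. All other distances are inherited from $d_X$. In words, $y_r$ is placed at distance exactly $r$ from $x_0$, and the extension is the "free" one: distances to the new points are determined through $x_0$. Then $d_Y(y_r, x_0) = \max\set{r, 0} = r$, so every distance in $D_X$ is attained and $Y$ is precise. All values lie in $D_X$, symmetry holds by definition, and positivity holds because $r > 0$ for every $r \in R$. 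The inclusion $X \to Y$, together with $\id{D_X}$, is a dc-embedding, so $X \leq Y$ and $D_X = D_Y$.

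The only real work is the ultrametric triangle inequality. The clean way to handle it is to note a uniform formula. Set $\rho(z) = r$ if $z = y_r$ and $\rho(z) = d_X(x_0, z)$ if $z \in X$. Then $d_Y(u,v) = \max\set{\rho(u), \rho(v)}$ whenever at least one of $u, v$ is new. This holds because $\rho(x_0) = 0$ and $r \ne d_X(x_0,x)$ is irrelevant to the max formula.

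Now check the triangle $u, v, w$ by cases according to how many of the points are new. If all three lie in $X$, the inequality is the one for $d_X$. If one of $u,w$ is new, say $u$, then
\[
d(u,w) = \max\set{\rho(u), \rho(w)} \leq \max\set{d(u,v), d(v,w)},
\]
because $\rho(u) \leq d(u,v)$ and $\rho(w) \leq \max\set{d(v,x_0), d(v,w)}$ by the ultrametric property. Here we need $\rho(v) \leq d(u,v)$ when $u$ is new, and $d(x_0,w) \leq \max\set{d(x_0,v), d(v,w)}$ holds in the old space when $v, w \in X$. The remaining case has $u, w \in X$ and $v$ new. Then $d(u,w) \leq \max\set{\rho(u), \rho(w)} \leq \max\set{d(u,v), d(v,w)}$ by the ultrametric inequality through $x_0$. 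I expect this bookkeeping over cases to be the main (though routine) obstacle.
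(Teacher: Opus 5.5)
Your proof is correct, but it uses a different construction from the paper's. The paper ignores which distances $X$ already attains. It lists the positive distances $r_1<\dots<r_m$ of $D_X$ and adjoins a fresh block $a_0,\dots,a_m$ with $d(a_i,a_j)=r_{\max(i,j)}$, placed at the top distance $r_m=\max D_X$ from every point of $X$. Because $r_m$ dominates every distance, the new block does not interact with the internal geometry of $X$. The triangle check is therefore immediate, and no base point is needed, so $X=\emptyset$ is not a special case.

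You instead adjoin only one point $y_r$ per missing distance $r$ and attach it to a base point $x_0$. Every $r\in R$ differs from all $d_X(x_0,x)$ and from the other elements of $R$. So once $d(y_r,x_0)=r$ is prescribed, your values $d(y_r,x)=\max\{r,d_X(x_0,x)\}$ and $d(y_r,y_s)=\max\{r,s\}$ are forced by the ultrametric inequality: yours is the unique ultrametric extension with this property. This gains economy (you add $\lvert R\rvert$ points, plus one if $X=\emptyset$) and a clean uniform verification through $\rho(z)=d_Y(x_0,z)$. The cost is a base point and a slightly longer case analysis.

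One small gap in that case analysis: when $u$ is new, you justify $\rho(w)\le\max\{\rho(v),d(v,w)\}$ only for $v,w\in X$. When $v$ or $w$ is new, it follows at once from $d(v,w)=\max\{\rho(v),\rho(w)\}$, so the argument goes through.
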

\begin{proof}
List all positive elements of $E = D_X$ as $r_1<\cdots<r_m$ and pick 
$m+1$ elements $a_0,a_1\ldots, a_m$ that are not in $X$.
We let $Y = X \cup \{a_0,a_1\ldots, a_m\}$ and define $d \supseteq d_X$.
Let for $j>i$, $d(a_i,a_j)=r_j$, and $d(a_i,a_i)=0$.
For $b\in X$ and $i=0,1,\ldots, m$, we let $d(b,a_i)=r_m$.
Then $\seq{Y, d, E}$ is as required.
\end{proof}

Ultrametric spaces such that  distance functions have values in  a partially ordered set with a smallest element were studied by Priess-Crampe and Ribenboim in a series of articles, e.g. \cite{PCR1}, \cite{PCR2}, \cite{PCRh}.
Note that what we call a dc-embedding appears also in \cite[p.~57]{PCR1} and is called there an isometry.
Isometric homogeneity formulated using levelwise regular branching in the tree of precise balls is considered in \cite{PCRh}.

\subsubsection{Tree representation of ultrametric spaces by precise balls}\label{sec:treerep}

It is well known that ultrametric spaces correspond to trees of balls. We will make it precise in our setup and we will introduce notation used later.

Let us fix an ultrametric space $X$.
By a \emph{ball} in $X$, we mean any subset $B \subseteq X$ that is an \emph{open ball}, i.e. a set of the form $B_r(a) = \set{x \in X: d(x, a) < r}$ for some $a \in X$ and $r \in D_X \setminus \set{0}$, or a \emph{closed ball}, i.e. a set of the form $\bar{B}_r(a) = \set{x \in X: d(x, a) \leq r}$ for some $a \in X$ and $r \in D_X$.
Note that neither the center nor the radius is unique for a general ball (in fact, every point of a ball works as its center in an ultrametric space).
A \emph{precise ball} is a closed ball of the form $\bar{B}(a, b) = \bar{B}_{d(a, b)}(a) = \set{x \in X: d(a, x) \leq d(a, b)}$, or equivalently a ball $B$ that attains its \emph{diameter} $\diam(B) = \max\set{d(x, y): x, y \in B}$. 
In particular, every singleton $\set{a}$ is a precise ball of diameter $0$.

Note that every two balls $B$, $B'$ are either disjoint, or one is contained in the other.
In the former case, we put $d(B, B') = d(x, x')$ for any $x \in B$ and $x' \in B'$, which does not depend on the choice of the points.
In the latter case, we put $d(B, B') = 0$, so in both cases we have $d(B, B') = \min\set{d(x, x'): x \in B, x' \in B'}$.

\bigskip

By a \emph{tree} we mean a partially ordered set $T$ such that for every $t \in T$ the \emph{down-set} $(\leftarrow, t] = \set{x: x \leq t}$ is linearly ordered.
This differs from the classical definition in set theory, where one requires down intervals to be well-ordered.
$T$ is a \emph{meet tree} if additionally for every $t, t' \in T$ the meet $t \meet t' = \inf\set{t, t'}$ exists.
Let $\Tree$ denote the category of meet trees and meet-preserving embeddings.
For every $t \in T$ we have the equivalence relation $\approx_t$ defined by $s \approx_t s'$ if $s \meet s' > t$ that partitions the up-set $(t, \to )=\{x\colon x>t\}$ into equivalence classes, which we call \emph{branching classes}.
To see the transitivity of $s \approx_t s' \approx_t s''$ note that $s \meet s'' \geq \min\set{s \meet s', s' \meet s''} > t$ since $(\from, s']$ is linearly ordered.
If $T$ is finite, then branching classes have immediate successors of $t$ as smallest elements.

A \emph{leveled meet tree} is a triple $\seq{T, \lev_T, \Lev(T)}$ such that $T$ is a meet tree, $\Lev(T)$ is a linear order, and $\lev_T\maps T \to \Lev(T)$, often denoted just by $\ell$, reverses the strict order, i.e. $t < t'$ implies $\lev_T(t) > \lev_T(t')$, i.e. every restriction $\lev_T\maps (\leftarrow, t] \to \Lev(T)$ is a reverse-embedding of linear orders.
Similarly to dc-embeddings of ultrametric spaces, a \emph{level-carrying $\Tree$-embedding} is a $\Tree$-map $f\maps S \to T$ together with an $\Lin$-map $\Lev(f)\maps \Lev(S) \to \Lev(T)$ such that $\lev_T(f(s)) = \Lev(f)(\lev_S(s))$ for every $s \in S$.
Let $\LevTree$ be the category of leveled meet trees and level-carrying $\Tree$-embeddings.
Clearly, we have a forgetful functor $\Lev\maps \LevTree \to \Lin$.
Let $\LevTree^0$ be the subcategory of $\LevTree$ of leveled meet trees $T$ such that $\Lev(T)$ has the smallest element $0$ and such that for every $t \in T$ there are leaves $x, y \in \ell_T^{-1}(0)$ such that $t = x \meet y$.
As morphisms of $\LevTree^0$ we take $\LevTree$-morphisms $f$ such that $\Lev(f)$ preserves $0$.
Then $\Lev$ yields a functor $\LevTree^0 \to \Lin^0$.

A connection between $\R$-valued ultrametric spaces and certain types of trees was exploited by Gao and Shao~\cite{GS} in connection with studying the isometry groups, and later by Camerlo, Marcone, and Motto Ros~\cite{CaMarRos} while describing the connection functorially.
Now we give an explicit equivalence of our categories of spaces and trees.

\begin{observation}
   $\Ult$ and $\LevTree^0$ are equivalent categories.
    For every ultrametric space $X$ we consider the meet tree $T = \seq{\Beebar_X, \sups}$ of all precise balls, and we turn it into a leveled tree by putting $\Lev(T) = D_X$ and $\ell_T = \diam_X$, where $\diam_X\maps \Beebar_X \to D_X$ is the diameter operation.
    For every dc-embedding $f\maps X \to Y$ let $\Beebar_f\maps \Beebar_X \to \Beebar_Y$ be the map $\bar{B}(x, x') \mapsto \bar{B}(f(x), f(x'))$ together with the level-carrying part $\Lev(\Beebar_f) = D_f$.
    Then $\Beebar\maps \Ult \to \LevTree^0$ is a functor.

    Now for every leveled meet tree $T$ in $\LevTree^0$ let $\Max(T) = \seq{\lev\fiber{0}, d, \Lev(T)}$ be the ultrametric space of maximal elements of $T$ where $d(x, y) = \lev(x \meet y)$, and for every $\LevTree^0$-map $f\maps S \to T$ let $\Max(f)\maps \Max(S) \to \Max(T)$ be the restriction $f\restr{\ell\fiber{0}}$ together with the distance-carrying part $D_{\Max(f)} = \Lev(f)$.
    Then $\Max\maps \LevTree^0 \to \Ult$ is a functor.

    Together, $\Beebar$ and $\Max$ realize an equivalence between the categories $\Ult$ and $\LevTree^0$ that restricts to an equivalence between $\Ult_\fin$ and $\LevTree^0_\fin$.
    Moreover, we have ${\Lev} \cmp \Beebar = D$ and $D \cmp \Max = \Lev$, i.e. the functors do nothing on the distance/level set.
\end{observation}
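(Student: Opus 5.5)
I will check in turn that $\Beebar$ and $\Max$ are well-defined functors, construct natural isomorphisms $\Max \cmp \Beebar \cong \id{\Ult}$ and $\Beebar \cmp \Max \cong \id{\LevTree^0}$, and note that everything preserves finiteness and acts identically on distances and levels.

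\textbf{The functor $\Beebar$.} Any two precise balls are disjoint or nested, so the balls containing a fixed ball form a chain under $\sups$, and $\seq{\Beebar_X, \sups}$ is a tree. The meet of $\bar{B}(a,b)$ and $\bar{B}(c,e)$ is the smallest precise ball containing both, namely $\bar{B}(a, c')$, where $c'$ is chosen so that $d(a,c') = \max\set{d(a,b), d(c,e), d(a,c)}$. Such a $c'$ exists because each of the three values is attained by a pair containing $a$ (or by $c,e$, which lies in the ball around $a$ of that radius). The diameter map strictly reverses strict inclusion, since a precise ball attains its diameter and a strictly larger precise ball around the same center must attain a strictly larger distance. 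Every precise ball $\bar{B}(x,y)$ equals $\set{x} \meet \set{y}$ with $\diam\set{x} = 0$, and the leaves of diameter $0$ are exactly the singletons, so $\Beebar_X \in \LevTree^0$.

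For a dc-embedding $f$, I need $\Beebar_f$ to be well defined and injective. This follows from
\[ \bar{B}(x,x') \subseteq \bar{B}(y,y') \iff d(x,y) \leq d(y,y') \text{ and } d(x,x') \leq d(y,y'), \]
which is a condition expressed purely by the order of distances, and $D_f$ preserves and reflects that order. So $\Beebar_f$ is well defined, order-embedding, meet-preserving (meets are computed by a max of distances) and level-carrying with $\Lev(\Beebar_f) = D_f$. Functoriality is immediate.

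\textbf{The functor $\Max$.} On $\ell\fiber{0}$ set $d(x,y) = \ell(x \meet y)$. Then $d(x,y) = 0$ iff $x = y$, because a leaf strictly below another leaf would have positive level. For the ultrametric inequality, the meets $x \meet y$, $y \meet z$, $x \meet z$ all lie in the chain $(\from, x]$, and $x \meet z \geq \min\set{x \meet y, y \meet z}$, since any element below both $x$ and $z$ in that chain is dominated suitably. Because $\ell$ reverses order, $d(x,z) \leq \max\set{d(x,y), d(y,z)}$. Morphisms preserve meets and levels, so $\Max(f)$ is a dc-embedding.

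\textbf{Natural isomorphisms.} For $X \in \Ult$, the map $x \mapsto \set{x}$ identifies $X$ with the leaves of $\Beebar_X$, and $\diam(\set{x} \meet \set{y}) = \diam \bar{B}(x,y) = d(x,y)$. The identity on $D_X$ then gives $\Max\Beebar X \cong X$, naturally in $X$.

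For $T \in \LevTree^0$, send $t$ to the set of leaves above it, $\eta_T(t) = \set{x \in \ell\fiber{0} : x \geq t}$. Writing $t = x \meet y$ as the definition of $\LevTree^0$ allows, I claim $\eta_T(t) = \bar{B}(x,y)$ in $\Max(T)$. A leaf $z$ satisfies $z \geq t$ iff $z \meet x \geq t$, iff $\ell(z \meet x) \leq \ell(t)$, using that $z \meet x$ and $t$ are comparable in $(\from, x]$ and $\ell$ is strictly reversing there. Hence $\eta_T$ is surjective onto precise balls and level-preserving with diameter $\ell(t)$. It is an order isomorphism because $t \leq s$ iff $\eta_T(t) \sups \eta_T(s)$; the reverse direction uses that every $s$ has a leaf above it, which is again witnessed by the meet representation. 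Naturality is a direct check.

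\textbf{Main obstacle.} The step I expect to need the most care is this last point: proving that $\eta_T$ is injective and reflects order. This is exactly where the $\LevTree^0$ condition that every node is a meet of two leaves is essential. Without it, non-branching internal nodes would be lost under $\Beebar \cmp \Max$.

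Finally, both functors preserve finiteness. A finite $X$ has at most $\card{X}^2$ precise balls, and a finite tree has finitely many leaves. So the equivalence restricts to one between $\Ult_\fin$ and $\LevTree^0_\fin$, and the identities ${\Lev} \cmp \Beebar = D$ and $D \cmp \Max = \Lev$ hold by construction.
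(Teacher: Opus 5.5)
Your proposal is correct and follows essentially the same route as the paper's proof: the inclusion criterion for precise balls transported by $D_f$, singletons for the natural isomorphism $X \cong \Max(\Beebar_X)$, and sets of leaves above a node for $T \cong \Beebar_{\Max(T)}$, with the meet-of-two-leaves axiom of $\LevTree^0$ giving faithfulness. One local slip needs fixing: in the ultrametric inequality for $\Max(T)$, $y \meet z$ need not lie in $(\from, x]$. The correct observation is that $x \meet y$ and $y \meet z$ both lie in the chain $(\from, y]$, so the smaller of them is below both $x$ and $z$, hence below $x \meet z$, which gives $\ell(x \meet z) \leq \max\set{\ell(x \meet y), \ell(y \meet z)}$.
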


\begin{proof} 
    There are many thing to verify, most of which are very easy and left for the reader. We stress the following.
    \begin{itemize}
        \item $T = \seq{\Beebar_X, \sups}$ indeed has meets. If two balls $B, B' \in \Beebar_X$ are comparable, the meet is the larger one, and if they are disjoint, $B \meet B' = \bar{B}(x, x')$ for any $x \in B$ and $x' \in B'$.
        
        \item We have $\bar{B}(x, x') \subs \bar{B}(y, y')$ if and only if $\bar{B}(f(x), f(x')) \subs \bar{B}(f(y), f(y'))$.
        Let $r = d(x, x')$ and $s = d(y, y')$, so $\bar{B}(x, x') = \bar{B}_r(x)$ and $\bar{B}(y, y') = \bar{B}_s(y)$.
        Then $\bar{B}(x, x') \subs \bar{B}(y, y')$ if and only if $d(x, y) \leq s$ and $r \leq s$, which is equivalent to $d(f(x), f(y)) \leq D_f(s)$ and $D_f(r) \leq D_f(s)$ as $f$ is a dc-embedding.
        
        \item The previous point implies that $\Beebar_f$ is well-defined and that it preserves meets.

        \item For every $X \in \Ult$ we have that $\Max(\Beebar_X)$ is a copy of $X$ where every point $x$ is replaced with the singleton precise ball $\set{x}$, which yields a natural isomorphism.

        \item Similarly, for every $T \in \LevTree^0$, $\Beebar_{\Max(T)}$ is a copy of $T$ where every node $t$ is replaced with the family $[t, \to) \cap \ell^{-1}(0)$ of all leaves above $t$.
        This faithfully reconstructs the tree since we assumed that for every $t \in T$ there are $x, y \in \ell^{-1}(0)$ with $t = x \meet y$.
        \qedhere
    \end{itemize}
\end{proof}

\subsubsection{Tree representation of ultrametric spaces by adjacent balls}\label{sec:adjballs}

Replacing precise balls with so-called adjacent balls we arrive at the notion of a tree of adjacent balls.
In Proposition~\ref{prop: beta} and Section~\ref{sec:cloU} we will use those trees to represent automorphism groups of Cauchy completions of ultrametric spaces, and we will show that there is a generic leveled adjacency tree.

Let $X$ be an ultrametric space, let $B, B' \subs X$ be disjoint balls, and let $r = d(B, B')$.
Let us put $E_{B, B'} = B_r(x)$ and $E_{B', B} = B_r(x')$ for any $x \in B$ and $x' \in B'$.
Note that the open balls $E_{B, B'}$ and $E_{B', B}$ are the largest balls that are disjoint and contain $B$ and $B'$, respectively.
We say that $B, B'$ are \emph{adjacent}, and we write $B \sim B'$, if $B = E_{B, B'}$ and $B' = E_{B', B}$.
Note that the $\seq{E_{B, B'}, E_{B', B}}$ is the unique pair of adjacent balls containing $B$ and $B'$.
We call it the \emph{adjacent envelope} of $\seq{B, B'}$.
We shall call a single (necessarily open) ball $B$ \emph{adjacent} if it is adjacent to some other ball $B'$.

For every $x \neq y \in X$, similarly to $\bar{B}(x, y)$, we put $B(x, y) = B_r(x)$ where $r = d(x, y)$.
Let $B, B' \subs X$ be disjoint balls as before, and let $x \in B$ and $y \in B'$ be any points.
We have that $\seq{B(x, y), B(y, x)}$ is the adjacency envelope of $\seq{B, B'}$.
In particular, the ball $B(x, y)$ is adjacent, and every adjacent ball is of this form.
Let $\Bee_X = \set{B(x, y): x \neq y \in X}$ denote the family of all adjacent balls of $X$.

For every adjacent ball $B \subs X$ we define its \emph{radius} $r(B)$ as $d(x, y)$ for any $x, y$ such that $B = B(x, y)$, or equivalently $r(B) = \max\set{r \in D_X: B = B_r(x)} > 0$ where $x$ is any point of $B$.
Clearly, $B \sim B'$ implies $r(B) = r(B')$.
Note that in a finite space a ball can be simultaneously precise and adjacent, but the diameter and radius are different.

\begin{observation}
    For any disjoint balls $B$, $B'$, $B''$, if $B \sim B'$ and $B' \sim B''$, then $B \sim B''$, so the adjacency relation is the anti-reflexive modification of an equivalence relation on the family of all balls of $X$.
    Adjacent balls form nondegenerate equivalence classes, while non-adjacent balls form singleton equivalence classes.
    Moreover, nondegenerate precise balls (of diameter $r$) are exactly unions of nondegenerate adjacency equivalence classes (of adjacent balls of radius $r$), which correspond to the branching classes.
\end{observation}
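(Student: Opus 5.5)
The plan is to work throughout with the description of adjacent balls via points: every adjacent ball has the form $B(x,y) = B_r(x)$ with $r = d(x,y) > 0$. For disjoint balls $B, B'$ the adjacent envelope is determined by any $x \in B$, $y \in B'$ through $E_{B,B'} = B(x,y)$. So $B \sim B'$ holds exactly when $B = B_r(x)$ and $B' = B_r(y)$ for some $x \in B$, $y \in B'$ with $r = d(x,y) = d(B,B')$.

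For transitivity, suppose $B \sim B'$ and $B' \sim B''$ with $B, B', B''$ pairwise disjoint. Fix $x \in B$, $y \in B'$, $z \in B''$, and put $r = d(x,y)$ and $s = d(y,z)$. Then $B' = B_r(y) = B_s(y)$.
\begin{itemize}
\item First I show $r = s$. Suppose $r < s$. Since $d(x,y) = r$, by the convention for $r(B)$ there is no point at distance in $[r,s)$ from $y$ except those of distance exactly $r$, such as $x$. So $x \in B_s(y) = B_r(y)$, that is $d(x,y) < r$, a contradiction. The case $s < r$ is symmetric with $z$ in place of $x$.
\item Next, $d(x,z) \le \max(r,r) = r$. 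Also $d(x,z) \ge r$, since otherwise $z \in B_r(x) = B$, contradicting disjointness.
\item Hence $d(B,B'') = r$, and $E_{B,B''} = B_r(x) = B$ and $E_{B'',B} = B_r(z) = B''$. This gives $B \sim B''$.
\end{itemize}
Consequently the relation ``$B = B'$ or $B \sim B'$'' is reflexive, symmetric and transitive. In the transitivity check, the case where $B = B''$ is trivially fine, so $\sim$ is the anti-reflexive part of an equivalence relation.

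The classes are as claimed: an adjacent ball is by definition adjacent to some other ball, so its class is nondegenerate, while a non-adjacent ball has a singleton class.

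For the last claim, fix a nondegenerate precise ball $P = \bar B(a,b)$ of diameter $r = d(a,b) > 0$. I would show that the relation $x \approx y \iff d(x,y) < r$ partitions $P$ into open balls $B_r(x)$. Any two distinct such classes, with representatives $x, y$, satisfy $d(x,y) = r$, so they are mutually adjacent with radius $r$.
\begin{itemize}
\item At least two classes occur, since $a$ and $b$ lie in different ones. Thus $P$ is the union of a single nondegenerate adjacency class of balls of radius $r$, namely $\{B(x,a) : x \in P\} \cup \{B(a,b)\}$.
\item Conversely, any ball adjacent to some $B_r(x)$ with $x \in P$ has the form $B_r(y)$ with $d(x,y) = r$, so $y \in P$. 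Hence the class is exactly the set of these balls.
\end{itemize}

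In the tree $\Beebar_X$, the node $P$ has as its branching classes the sets of precise balls contained in a fixed $B_r(x)$ with $x \in P$. This is because $Q \approx_P Q'$ iff $Q \meet Q' \subsetneq P$, iff points of $Q$ and $Q'$ lie at distance $< r$. This gives the correspondence with branching classes.

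The main obstacle is the radius argument for $r = s$: one must check that $B_r(y) = B_s(y)$ forces equality of the radii when both radii are realized as distances from $y$. The rest is bookkeeping with the ultrametric inequality.
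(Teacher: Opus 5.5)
The paper states this observation without proof, so there is no route to compare with. Your direct verification from the definition of the adjacent envelope is correct in substance, and it is exactly the check the paper leaves to the reader. Four local points should still be repaired.

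First, in the step $r=s$, the sentence invoking ``the convention for $r(B)$'' is muddled. Under the hypothesis $B_r(y)=B_s(y)$ with $r<s$, there is no point at distance in $[r,s)$ from $y$ at all, including at distance exactly $r$. The argument is simply that $d(x,y)=r<s$ gives $x\in B_s(y)=B_r(y)$, i.e.\ $d(x,y)<r$, a contradiction.

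Second, your explicit description $\{B(x,a): x\in P\}\cup\{B(a,b)\}$ of the class is wrong. For $x\in P$ with $0<d(x,a)<r$, the ball $B(x,a)=B_{d(x,a)}(x)$ has radius $d(x,a)<r$ and lies in a different class. Moreover, $B(a,a)$ is undefined. The class is $\{B_r(x): x\in P\}$, which is what your preceding sentence actually proves.

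Third, to conclude that ``$=$ or $\sim$'' is transitive, you must also rule out $B\neq B''$ with $B\cap B''\neq\emptyset$. This is immediate: your proof of $r=s$ never used disjointness of $B$ and $B''$. Hence $B=B_r(x)$ and $B''=B_r(z)$ are open balls of the same radius, so they are equal or disjoint.

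Fourth, for the word ``exactly'' you should also record the converse. Every nondegenerate class comes from some nondegenerate precise ball: the class of $B(x,y)$ is the one attached to $P=\bar B(x,y)$. Relatedly, your ``Conversely'' bullet claims that a ball adjacent to $B_r(x)$ has radius $r$. This needs the same radius-uniqueness argument as before, using a witness $x'\in P$ with $d(x,x')=r$, or else the well-definedness of $r(B)$ that the paper takes for granted.

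With these fixes, including the branching-class identification via $Q\meet Q'\subsetneq P$, the proof is complete.
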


\begin{uwgi}
In a finite space $X$, every open ball $B \subsetneq X$ is adjacent. But in infinite spaces there may exist nontrivial open balls that are not adjacent, e.g. a ball $B_r(x)$ such that for every $r' > r$ there is $y$ with $r < d(x, y) < r'$, but there is no $y$ with $d(x, y) = r$.
\end{uwgi}

Motivated by the the concrete adjacency structure $\seq{\Bee_X, \sups, \sim}$ of balls, we define the {\it adjacency relation} on any tree $T$ by $$t \sim t' \iff (\from, t) = (\from, t') \text{ and }t \neq t'.$$
By an \emph{adjacency tree} we mean a tree $T$ such that:
\begin{itemize}
    \item For every $t \in T$ there is $t' \neq t$ with $t' \sim t$.
    \item For any incomparable $t, t' \in T$ there are $e \leq t$ and $e' \leq t'$ in $T$ such that $e \sim e'$.
    We call the pair $\seq{e, e'}$ the \emph{adjacency envelope} of $\seq{t, t'}$.
\end{itemize}
Note that for every incomparable pair $\seq{t, t'}$, its adjacency envelope is unique.
Indeed, if $\seq{e, e'}$ and $\seq{f, f'}$ are two adjacency envelopes, then we have $e, f \leq t$, and so they are comparable.
If $e < f \sim f'$, then $e < f'$, which is a contradiction with $e', f' \leq t$ being comparable.
Similarly, any of $e > f$, $e' < f'$, and $e' > f'$ lead to a contradiction, so $\seq{e, e'} = \seq{f, f'}$.

Let $\ATree$ denote the category of all adjacency trees and all adjacency preserving and adjacency reflecting tree embeddings, i.e. maps $f\maps S \to T$ such that for every $s, s' \in S$ we have $s \leq s'$ if and only if $f(s) \leq f(s')$, and $s \sim s'$ if and only if $f(s) \sim f(s')$.

\begin{lm} \label{thm:adjacency_preserving}
    For a map $f\maps S \to T$ between adjacency trees to be an $\ATree$-map it is enough that it preserves order and adjacency.
\end{lm}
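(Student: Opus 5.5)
We need to show that if $f\maps S \to T$ is a map between adjacency trees with $s \leq s' \Rightarrow f(s) \leq f(s')$ and $s \sim s' \Rightarrow f(s) \sim f(s')$, then $f$ is injective, reflects the order, and reflects adjacency. The one structural fact we use is that in an adjacency tree, any two incomparable nodes $s, s'$ have an adjacency envelope $\seq{e, e'}$ with $e \leq s$, $e' \leq s'$, $e \sim e'$. The plan is to pass this configuration through $f$ and observe that $f(s)$ and $f(s')$ remain incomparable. Everything then reduces to the case of incomparable pairs.

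\textbf{Key claim: incomparable nodes have incomparable images.} Let $s, s'$ be incomparable with envelope $\seq{e, e'}$. By hypothesis $f(e) \leq f(s)$, $f(e') \leq f(s')$ and $f(e) \sim f(e')$. Suppose $f(s) \leq f(s')$.
\begin{itemize}
\item Then $f(e)$ and $f(e')$ both lie in the linearly ordered down-set $(\from, f(s')]$, so they are comparable.
\item Adjacent nodes are never comparable: if $t < t'$ then $t \in (\from, t')$ but $t \notin (\from, t)$, so the down-sets differ. Also $t \sim t'$ requires $t \neq t'$.
\item This contradicts $f(e) \sim f(e')$.
\end{itemize}
The case $f(s') \leq f(s)$ is symmetric. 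In particular $f(s) \neq f(s')$.

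\textbf{Injectivity and order reflection.} Take $s \neq s'$. If they are comparable, say $s < s'$, choose $u \sim s'$ with $u \neq s'$, which exists by the first axiom. Then $(\from, u) = (\from, s')$, which contains $s$, so $s < u$. Since $u$ and $s'$ are incomparable, the key claim makes $f(u)$ and $f(s')$ incomparable. If we had $f(s) = f(s')$, then $f(s') = f(s) \leq f(u)$, a contradiction. So $f$ is injective on comparable pairs, and on incomparable pairs by the key claim.

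Now suppose $f(s) \leq f(s')$. The pair $s, s'$ cannot be incomparable, by the key claim. If $s' < s$, then $f(s') \leq f(s)$, so $f(s) = f(s')$, contradicting injectivity. Hence $s \leq s'$, so $f$ is a strict order embedding.

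\textbf{Adjacency reflection.} Assume $f(s) \sim f(s')$. Then $f(s)$ and $f(s')$ are incomparable, so $s$ and $s'$ are incomparable by order reflection; let $\seq{e, e'}$ be their envelope. We have $f(e) \sim f(e')$ with $f(e) \leq f(s)$ and $f(e') \leq f(s')$.

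Suppose $f(e) < f(s)$. Then $f(e) \in (\from, f(s)) = (\from, f(s'))$, so $f(e) < f(s')$. Both $f(e)$ and $f(e')$ then lie in the chain $(\from, f(s')]$, so they are comparable, which contradicts $f(e) \sim f(e')$. Hence $f(e) = f(s)$, and by symmetry $f(e') = f(s')$. Injectivity gives $e = s$ and $e' = s'$, so $s \sim s'$.

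The main obstacle is the key claim, where the envelope axiom does the real work. The remaining steps are bookkeeping using linearity of down-sets.
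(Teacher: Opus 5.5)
Your proof is correct and follows essentially the same route as the paper's. The adjacency envelope shows that incomparable pairs have incomparable images; an adjacent sibling of the larger node rules out $f$ collapsing a comparable pair; and adjacency is reflected by forcing $f(e)=f(s)$ and $f(e')=f(s')$. You also make explicit two points the paper leaves implicit: why $f(e)=f(s)$ holds, and that the final step $s=e$ needs injectivity, which you prove first. One small slip: in the last part, the incomparability of $s,s'$ follows from order \emph{preservation}, not from order reflection.
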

\begin{proof}
    The map $f$ reflects adjacency:
    if $f(s) \sim f(s')$, then $s, s'$ are incomparable, and so the adjacency envelope $s \geq e \sim e' \leq s'$ exists, and we have $f(s) \geq f(e) \sim f(e') \leq f(s')$.
    Hence, $f(s) = f(e)$ and $f(s') = f(e')$, and we have $s = e \sim e' = s'$.

    The map $f$ reflects order:
    suppose $f(s) \leq f(s')$.
    We consider several cases.
    If $s \leq s'$, we are done.
    If $s, s'$ are incomparable, we consider the adjacency envelope $s \geq e \sim e' \leq s'$, and we have $f(s) \geq f(e) \sim f(e') \leq f(s')$, and so $f(s), f(s')$ are incomparable, which is a contradiction.
    If $s > s'$, then $f(s) = f(s')$, but there is $t \sim s$, and so $t > s'$, and we have $f(s') \leq f(t) \sim f(s) = f(s')$, which is a contradiction.
\end{proof}

As with leveled meet trees, we consider \emph{leveled adjacency trees} $\seq{T, \ell_T, \Lev(T)}$: we assume that $T$ is an adjacency tree, $\Lev(T) \in \Lin^0$ is a linear order, and $\lev_T\maps T \to \Lev(T)$, often denoted just by $\lev$, is a map such that for every $t, t' \in T$ we have:
\[
    t < t' \implies \lev(t) > \lev(t'), \quad\qquad
    t \sim t' \implies \lev(t) = \lev(t'), \quad\qquad
    \lev\fiber{0} = \emptyset.
\]
We define the category $\Ball$ such that objects are leveled adjacency trees, and a $\Ball$-map $f\maps \seq{S, \lev_S, \Lev(S)} \to \seq{T, \lev_T, \Lev(T)}$ is an $\ATree$-map $f\maps S \to T$ together with an $\Lin^0$-map $\Lev(f)\maps \Lev(S) \to \Lev(T)$ such that $f$ is \emph{level-carrying}, i.e. $\lev_T(f(s)) = \Lev(f)(\lev_S(s))$ for every $s \in S$.
The projection to the level sort induces a functor $\Lev\maps \Ball \to \Lin^0$.

Let $X \in \Ult$.
By the earlier observations $T = \seq{\Bee_X, \sups,\sim}$ is an adjacency tree.
If we put $\Lev(T) = D_X$ and $\lev_T(B) = r(B)$, we obtain a leveled adjacency tree $\seq{T, \lev_T, \Lev(T)} \in \Ball$.
In this way we can view every $\Bee_X$ as a $\Ball$-object.

For every dc-embedding $f\maps X \to Y$ we consider a $\Ball$-map $\Bee_f\maps \Bee_X \to \Bee_Y$ defined by
$\Bee_f(B(x, y)) = B(f(x), f(y))$,
or equivalently by $\Bee_f(B_r(x)) = B_{D_f(r)}(f(x))$, where $r=d(x,y)$.

\begin{prop}\label{prop:thebee}
    $\Bee_f\maps \Bee_X \to \Bee_Y$ is a $\Ball$-map with $\Lev(\Bee_f) = D_f$, and so we have a functor $\Bee\maps \Ult \to \Ball$ with ${\Lev} \cmp \Bee = D$.
\end{prop}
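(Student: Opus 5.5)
We must show four things: $\Bee_f$ is well defined, it preserves order and adjacency (by Lemma~\ref{thm:adjacency_preserving} this already makes it an $\ATree$-map), it is level-carrying with $\Lev(\Bee_f) = D_f$, and $\Bee$ is functorial. The working tool is a membership criterion. For $x \ne y$ and any $z$, with $r = d(x,y)$, we have $z \in B(x,y)$ iff $d(x,z) < d(x,y)$. Since $f$ is a dc-embedding and $D_f$ is an order embedding, this holds iff $d(f(x),f(z)) < d(f(x),f(y))$, i.e.\ iff $f(z) \in B(f(x),f(y))$. Hence $B(f(x),f(y)) \cap f[X] = f[B(x,y)]$.

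For well-definedness, suppose $B(x,y) = B(x',y')$, so $x' \in B(x,y)$ and the radii agree. The radius is intrinsic: $r(B) = \max\set{r \in D_X : B = B_r(x)}$. I would rather characterize it as $d(B, X \setminus B)$, attained by $y$. Then $d(x,y) = d(x',y')$, hence $d(f(x),f(y)) = d(f(x'),f(y'))$. Also $d(f(x),f(x')) = D_f(d(x,x')) < D_f(d(x,y))$, so $f(x')$ lies in $B(f(x),f(y))$ and the two open balls $B_{D_f(r)}(f(x))$ and $B_{D_f(r)}(f(x'))$ coincide. This also gives the alternative description $\Bee_f(B_r(x)) = B_{D_f(r)}(f(x))$ and shows at once that $\lev(\Bee_f(B)) = D_f(r(B))$. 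So the map is level-carrying with $\Lev(\Bee_f) = D_f$, which is an $\Lin^0$-map.

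For order preservation, write $B = B(x,y) \supseteq B' = B(x',y')$. Then $x' \in B$ and $r(B') \le r(B)$. Indeed, $y' \in B$ would force $d(x',y') < r(B)$, and if $y' \notin B$ then $B' = B_{d(x',y')}(x')$ contains $B$ unless $d(x',y') = r(B)$; in either case $r(B') \le r(B)$. Applying $f$ gives $d(f(x),f(x')) < D_f(r(B))$ and $D_f(r(B')) \le D_f(r(B))$, so $B_{D_f(r(B'))}(f(x')) \subseteq B_{D_f(r(B))}(f(x))$.

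For adjacency, $B \sim B'$ means that $B = B(x,y)$ and $B' = B(y,x)$ for some $x, y$, since an adjacent pair is its own envelope and the envelope of disjoint balls is $\seq{B(x,y),B(y,x)}$. The image pair $B(f(x),f(y))$, $B(f(y),f(x))$ is then again adjacent for the same reason. Functoriality follows directly from the formula, because $\Bee_{g\cmp f}(B(x,y)) = B(gf(x),gf(y))$, and identities go to identities.

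The main obstacle is the bookkeeping in the order-preservation step, namely making sure that containment of adjacent balls translates into a statement about centers and radii that only uses distances of points from $X$. The argument cannot quantify over all points of $Y$; this is exactly where the criterion $d(x',x) < r(B)$, $r(B') \le r(B)$ is used, and I would verify that criterion carefully first.
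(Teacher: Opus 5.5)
Your proposal is correct and follows essentially the same route as the paper. It shows well-definedness by independence of the center (using that $r(B)$ is intrinsic), order preservation from $r(B')\le r(B)$ together with a shared center, adjacency via the form $B(x,y)\sim B(y,x)$, the $\ATree$-property via Lemma~\ref{thm:adjacency_preserving}, and the level computation $r(B(f(x),f(y)))=D_f(d(x,y))$; you additionally spell out why $B'\subseteq B$ forces $r(B')\le r(B)$ and check functoriality, both of which the paper leaves implicit.
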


\begin{proof} \hfill
\begin{itemize}
\item $\Bee_f$ is well-defined since it maps an adjacent ball $B$ of radius $r$ to $B_q(f(x))$ where $q = D_f(r)$ and $x \in B$. This does not depend on the choice of $x$:
for $x, x' \in B$ we have $d(x, x') < r$, and so $d(f(x), f(x')) < q$, and so $B_q(f(x)) = B_q(f(x'))$.

\item $\Bee_f$ preserves order: if $x \in B \subs B'$, then $r(B) \leq r(B')$, and so 
\[
\Bee_f(B) = B_{D_f(r(B))}(f(x)) \subs B_{D_f(r(B'))}(f(x)) = \Bee_f(B').
\]

\item $\Bee_f$ preserves adjacency since every pair $B \sim B'$ is of the form $B(x, y) \sim B(y, x)$ and so $\Bee_f(B) = B(f(x), f(y)) \sim B(f(y), f(x)) = \Bee_f(B')$.

\item $\Bee_f$ is an $\ATree$-map by Lemma~\ref{thm:adjacency_preserving}.

\item $\Bee_f$ is level-carrying with $\Lev(\Bee_f) = D_f$ since for every $x, y \in X$ we have 
\begin{align*}
    \lev(\Bee_f(B(x, y))) = r(B(f(x), f(y))) &= d(f(x), f(y)) \\ &= D_f(d(x, y)) = D_f(r(B(x, y))).
    \qedhere
\end{align*}
\end{itemize}
\end{proof}

\begin{lm} \label{thm:B_essentially_wide}
    $\Bee_\fin$ is essentially surjective on objects, i.e. for every $T \in \Ball_\fin$ there is $X \in \Ult_\fin$ such that $\Bee_X \cong T$.
\end{lm}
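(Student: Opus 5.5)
Given a finite leveled adjacency tree $T$, the plan is to take as points the maximal elements of $T$, suitably augmented, with distance given by the level of the adjacency envelope. In a finite adjacency tree every element $t$ has an adjacent sibling $t'$, and the adjacency classes are exactly the nonempty sets of the form $\set{s : (\from, s) = (\from, t)}$. Each such class has size at least $2$.

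For every maximal element $m \in T$ I would introduce a new point $x_m$. In addition, and this is where care is needed, the construction must make every node of $T$ equal to an open ball $B(x,y)$ whose radius is exactly its level. Two facts guide this. In $\Bee_X$ the ball $B(x,y)$ for $d(x,y)=r$ contains all points at distance $<r$ from $x$. Also, an element $t$ of $T$ sits strictly below its successors, whose levels are smaller. So a node $t$ should correspond to the set of points $x_m$ with $m \geq t$.

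I would therefore set $X = \set{x_m : m \in \Max(T)}$, with $d(x_m, x_{m'}) = \lev(e)$ where $\seq{e,e'}$ is the adjacency envelope of $\seq{m,m'}$ for $m \neq m'$, and $D_X = \Lev(T)$. Note that distinct maximal elements are incomparable, so the envelope exists.

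The verification then runs in order:
\begin{enumerate}
\item The ultrametric inequality holds. For three maximal elements, the envelopes compare via the linear down-sets, and the level reverses order, so the lower branching point has the larger level.
\item Each node is recovered as a ball. For $t \in T$ with adjacent $t'$, pick maximal $m \geq t$ and $m' \geq t'$. Then the envelope of $\seq{m,m'}$ is $\seq{t,t'}$, so $B(x_m,x_{m'})$ consists of the points $x_n$ with $d(x_m,x_n) < \lev(t)$, and these should be exactly the $x_n$ with $n \geq t$.
\item The map $t \mapsto \set{x_n : n \geq t}$ is therefore a bijection onto $\Bee_X$. It preserves order and adjacency, so by Lemma~\ref{thm:adjacency_preserving} it is a $\Ball$-isomorphism with identity on levels.
\end{enumerate}

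The main obstacle is the equivalence in step 2: $d(x_m,x_n) < \lev(t)$ if and only if $n \geq t$. One direction is direct. If $n \not\geq t$, the envelope of $\seq{m,n}$ sits at or below $t$ and so has level $\geq \lev(t)$. If $n \geq t$, the envelope lies strictly above $t$, hence has strictly smaller level.

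A subtle point is that $\lev$ is only required to be strictly reversing along chains, not injective across the tree. That is fine here, since all comparisons take place inside a single branch.

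If maximal elements alone do not suffice, one could add extra leaves below. However, the adjacency-tree axiom gives every maximal element a sibling, and finiteness ensures that every node lies below some maximal element, so no padding should be needed. Finally, $D_X = \Lev(T)$ is finite, so $X \in \Ult_\fin$.
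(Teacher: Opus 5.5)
Your proposal is correct and follows essentially the paper's proof. It uses the same space of leaves, with $d(x_m,x_{m'})$ the level of the adjacency envelope and $D_X=\Lev(T)$, and the same key claim that $B(x_m,x_{m'})=\set{x_n : n\geq t}$ whenever $m\geq t\sim t'\leq m'$. The only differences are minor: you check the ultrametric inequality directly by comparing envelopes, where the paper derives it afterwards from $\Bee_X$ being a tree, and you get injectivity and order reflection from Lemma~\ref{thm:adjacency_preserving} rather than by hand, which is legitimate because you established $X\in\Ult$ first, so $\Bee_X$ is already an adjacency tree.
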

\begin{proof}
    Note that $T$ is a leveled adjacency tree $T=\seq{T, \lev_T, \Lev(T)}$.
    Let $X$ be the set of all maximal elements of $T$ (i.e. the leaves), and let $D_X = \Lev(T)$.
    For $t \neq t' \in X$ we put $d_X(t, t') = \lev_T(e)$ where $\seq{e, e'}$ is the adjacency envelope of $\seq{t, t'}$ in $T$.

    Clearly $d = d_X$ is symmetric, and $d(x, x') = 0$ if and only if $x = x'$.
    We put $\Bee_X = \set{B_X(x, y): x \neq y}$, where $B_X(x, y) = \set{z: d(x, z) < d(x, y)}$, even though we have not yet proved the ultrametric triangle inequality for $d_X$. That will follow once we show that $\Bee_X$ is a tree.
    First we show that $\Phi$ mapping every $t \in T$ to $B_t = \set{x \in X: x \geq t}$ is an isomorphism of posets $T \to \Bee_X$.
    
    \medskip
    \noindent
    {\bf Claim.}
    We have $B_X(x, x') = B_t$ wherever $x \geq t \sim t' \leq x'$ for some $x, x' \in X$ and $t, t' \in T$.
    \medskip

    \noindent
    {\it Proof of Claim.}
    Whenever $y \in X \setminus \set{x}$, we let $\seq{e_{x, y}, e_{y, x}}$ denote the adjacency envelope.
    If $y \in B_t$, we have $x, y \geq t$, and so either $y = x$, or $e_{x, y}, e_{y, x} > t$ and $d(x, y) = \lev(e_{x, y}) < \lev(t) = d(x, x')$.
    In both cases $y \in B_X(x,x')$.

    If $y \in B_X(x, x')$, then either $y = x$, or $\ell(e_{y, x}) = d(y, x) < d(y, x') = \ell(e_{y, x'})$.
    In the second case we have $e_{x, y} \sim e_{y, x} > e_{y, x'}$ and so $x > e_{y, x'} \sim e_{x', y} \leq x'$.
    Hence, $\seq{e_{y, x'}, e_{x', y}} = \seq{t, t'}$, and $y > t$.
    In both cases, $y \geq t$, i.e. $y \in B_t$.
    \hfill $\qed_{\text{Claim}}$
    
    \medskip

    Given $t \in T$, let $t' \sim t$ and let $x \geq t$ and $x' \geq t'$ be any maximal elements of $T$, i.e.
    $x, x' \in X$.
    By the Claim, $B_t = B_X(x, x') \in \Bee_X$.
    On the other hand, given $B \in \Bee_X$, we know that $B$ is of the form $B_X(x, x')$ for some $x, x' \in X$.
    Let $\seq{t, t'}$ be the adjacency envelope of $\seq{x, x'}$ in $T$.
    Then by the Claim, we have $B = B_t$, and $\Phi$ is surjective.

    Now let $t, t' \in T$.
    Clearly, if $t \leq t'$, then $B_t \supseteq B_{t'}$.
    Also, if $t, t'$ are incomparable, then $B_t \cap B_{t'} = \emptyset$, and so they are $\subs$-incomparable.
    Moreover, if $t < t'$, then $B_t \supsetneq B_{t'}$ since there is $t'' \sim t'$ and $B_t \setminus B_{t'} \supseteq B_{t''}$.
    This together with surjectivity shows that $\Phi\maps T \to \Bee_X$ is an isomorphism of posets.
    Hence, $\Bee_X$ is a tree, and both $\Phi$ and $\Phi^{-1}$ preserve the adjacency relation.
    
    Since $\Bee_X$ is a tree, $d$ is an ultrametric:
    If we had $d(x, y) > d(x, z), d(y, z)$, then $B_X(x, y)$ and $B_X(y, x)$ would be incomparable balls containing $z$, giving a contradiction.
    Finally, for $t \in T$ and $x, x' \in X$ as in the Claim, we have $r(B_t) = r(B_X(x, x')) = d(x, x') = \lev_T(t)$, and so $\Phi$ is level-preserving.
    Altogether, $\Phi\maps T \to \Bee_X$ is a $\Ball$-isomorphism.
\end{proof}

\begin{observation} \label{thm:unique_ball_pair}
    Note that for every $X \in \Ult$ and $x \neq y \in X$ we have that $\seq{B(x, y), B(y, x)}$ is the unique pair $\seq{B, B'}$ of adjacent balls in $\Bee_X$ such that $x \in B$ and $y \in B'$.
\end{observation}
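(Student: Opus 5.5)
To prove Observation~\ref{thm:unique_ball_pair}, I split it into existence and uniqueness. Both parts rest only on the definitions of $B(x,y)$, of adjacency via the adjacent envelope $\seq{E_{B,B'},E_{B',B}}$, and on the dichotomy that two balls are either disjoint or nested.

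\emph{Existence.} Put $r=d(x,y)>0$. Then $x\in B(x,y)=B_r(x)$ and $y\in B(y,x)=B_r(y)$. These two balls are disjoint: a common point $z$ would give $d(x,y)\le\max\set{d(x,z),d(z,y)}<r$, which is impossible. They are also adjacent. Their distance is $d(B(x,y),B(y,x))=d(x,y)=r$, so the adjacent envelope of $\seq{B(x,y),B(y,x)}$ is $\seq{B_r(x),B_r(y)}$, which is the pair itself. This is exactly what the paper already records when it says that $\seq{B(x,y),B(y,x)}$ is the adjacency envelope of any pair of disjoint balls containing $x$ and $y$ respectively.

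\emph{Uniqueness.} Let $B\sim B'$ be adjacent balls with $x\in B$ and $y\in B'$. By definition they are disjoint, and $B=E_{B,B'}$, $B'=E_{B',B}$. Set $s=d(B,B')$. Because the distance between disjoint balls is computed from any pair of representatives, $s=d(x,y)=r$. The definition of the envelope allows any point of $B$ as center, so choosing $x$ gives $E_{B,B'}=B_s(x)=B_r(x)=B(x,y)$, and similarly $E_{B',B}=B_r(y)=B(y,x)$. Hence $\seq{B,B'}=\seq{B(x,y),B(y,x)}$.

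The only step that needs care is that $E_{B,B'}$ does not depend on the chosen representatives. It is well defined because all points of $B$ lie within distance less than $s$ of one another, since $B\subseteq E_{B,B'}$ and balls can be recentered at any of their points. Since adjacency forces both balls to equal their envelopes, this is essentially immediate. I therefore expect no real obstacle, only bookkeeping with the definitions.
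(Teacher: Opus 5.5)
Your proof is correct and is exactly the reasoning the paper leaves implicit. An adjacent pair coincides with its own adjacency envelope, and for any disjoint balls $B\ni x$, $B'\ni y$ that envelope is $\seq{B(x,y),B(y,x)}$, because $d(B,B')=d(x,y)$ and the envelope may be centered at $x$ and at $y$. The one loose spot is your last paragraph: its justification of center-independence is circular, since it appeals to $B\subseteq E_{B,B'}$. That independence is already part of the paper's definition of $E_{B,B'}$, and it also follows directly, because a point of $B$ at distance $\geq d(B,B')$ from $x$ would force the ball $B$ to contain $y$.
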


Let us for now denote by $\Ult^+_\fin$ denote the full subcategory of $\Ult_\fin$ consisting of spaces with nonempty point sort.

\begin{lm} \label{thm:B_star_surjective}
    For every $X \in \Ult_\fin$  and $Y \in \Ult$ (with the single exception of $X = \seq{\set{*}, D_X}$ and $Y = \seq{\emptyset, D_Y}$) and every $\Ball$-map $g\maps \Bee_X \to \Bee_Y$ there is a $\Ult$-map $f\maps X \to Y$ such that $\Bee_f = g$.
    In particular, $\Bee_\fin\maps \Ult^+_\fin \to \Ball_\fin$ is full.
\end{lm}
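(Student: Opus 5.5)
The plan is to build $f$ pointwise. For each $x \in X$, I will send $x$ to an arbitrary point of the $g$-image of the smallest adjacent ball containing $x$. The distance part is taken to be $D_f = \Lev(g)$. Observation~\ref{thm:unique_ball_pair} then forces $\Bee_f = g$.

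First I handle the degenerate cases. If $X$ has empty point sort, then $\Bee_X = \emptyset$. I take $f$ to be the empty map together with $D_f = \Lev(g)$, and $\Bee_f = g$ holds trivially. If $X = \seq{\set{*}, D_X}$, again $\Bee_X = \emptyset$. Since we excluded the case where $Y$ has empty point sort, I can send $*$ to any point of $Y$, with $D_f = \Lev(g)$. Then $\Bee_f = \emptyset = g$.

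Now assume $\card{X} \geq 2$. Fix $x \in X$. Because $X$ is finite, there is $y \neq x$ with $d(x,y)$ minimal, and I put $B_x = B(x,y)$. Every adjacent ball containing $x$ has the form $B(x,z)$ with $z \neq x$, hence radius at least $d(x,y)$. So $B_x$ is the least adjacent ball containing $x$, and it lies inside every other adjacent ball containing $x$. I expect this to be the only point needing care: in the infinite case such a minimal ball need not exist, which is why finiteness of $X$ is assumed.

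The ball $g(B_x) \in \Bee_Y$ is nonempty, so I choose $f(x) \in g(B_x)$. Since $g$ preserves order, $g(B_x) \subseteq g(B)$ for every adjacent ball $B \ni x$, and therefore $f(x) \in g(B)$ for all such $B$.

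Next I verify the distances. Take $x \neq y$ in $X$. Then $x \in B(x,y) \sim B(y,x) \ni y$. By the previous step, $f(x) \in g(B(x,y))$ and $f(y) \in g(B(y,x))$. Since $g$ preserves adjacency, these two balls are adjacent and in particular disjoint, so $f(x) \neq f(y)$. By Observation~\ref{thm:unique_ball_pair}, $\seq{g(B(x,y)), g(B(y,x))}$ is the unique adjacent pair separating $f(x)$ and $f(y)$. Hence
\[
B(f(x), f(y)) = g(B(x,y)).
\]
Taking radii and using that $g$ is level-carrying gives
\[
d(f(x), f(y)) = r(g(B(x,y))) = \Lev(g)(r(B(x,y))) = \Lev(g)(d(x,y)).
\]
So $f$ together with $D_f = \Lev(g)$ is a dc-embedding. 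Moreover $\Bee_f(B(x,y)) = B(f(x), f(y)) = g(B(x,y))$, and since every adjacent ball of $X$ is of this form, $\Bee_f = g$.

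For fullness of $\Bee_\fin\maps \Ult^+_\fin \to \Ball_\fin$, take $X, Y \in \Ult^+_\fin$. The excluded exception cannot occur because $Y$ has nonempty point sort. The constructed $f$ lies in $\Ult_\fin$ since both $X$ and $Y$ are finite.
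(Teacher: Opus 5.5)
Your proof is correct and follows essentially the same route as the paper. You send each $x$ to a point of $g$ applied to the smallest adjacent ball $B_{r_x}(x)$ containing $x$, set $D_f = \Lev(g)$, and use Observation~\ref{thm:unique_ball_pair} to get $g(B(x,y)) = B(f(x),f(y))$, which gives both the dc-embedding property and $\Bee_f = g$; in the degenerate cases you explicitly set $D_f = \Lev(g)$, which the paper leaves implicit.
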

\begin{proof}
    If $X$ has at most one point, then $\Bee_X = \emptyset$, and so $g$ is the empty map, and any map $f\maps X \to Y$ is a dc-embedding with $\Bee_f = g$.
    In the following we suppose that $X$ has at least two points.
    
    For every $x \in X$ let $r_x = \min\set{d(x, y): y \in X \setminus \set{x}}$, so that $B_{r_x}(x)$ is the smallest adjacent ball in $X$ containing $x$.
    We let $f(x)$ be any point of $g(B_{r_x}(x))$, and of course we put $D_f = \Lev(g)$.
    We need to check that $f$ is a dc-embedding and that $\Bee_f = g$.

    Let $x \neq y \in X$. We show that $g(B(x, y)) = B(f(x), f(y))$.
    In $\Bee_X$ we have $B_{r_x}(x) \subs B(x, y) \sim B(y, x) \supseteq B_{r_y}(y)$, and therefore 
    \[
        f(x) \in g(B_{r_x}(x)) \subs g(B(x, y)) \sim g(B(y, x)) \supseteq g(B_{r_y}(y)) \ni f(y).
    \]
    It follows from Observation~\ref{thm:unique_ball_pair} that 
    \[
        \seq{g(B(x, y)), g(B(y, x))} = \seq{B(f(x), f(y)), B(f(y), f(x))}.
    \]
    In particular, $g(B(x, y)) = B(f(x), f(y))$.
    Hence, we have
    \begin{align*}
        D_f(d(x, y)) = \Lev(g)(r(B(x, y))) &= r(g(B(x, y))) \\ &= r(B(f(x), f(y))) = d(f(x), f(y)),
    \end{align*}
    and that $f$ is a dc-embedding with $D_f = \Lev(g)$.
    Finally, the fact that $g(B(x, y)) = B(f(x), f(y))$ for every $x \neq y \in X$ exactly means that $\Bee_f = g$.
\end{proof}

\begin{uwgi}
    The functor $\Bee$ (as opposed to $\Bee_\fin$) is not full since a convergent sequence with and without the limit have the same ball space, and the identity embedding between them does not lift to points.
\end{uwgi}

\begin{uwgi}
    Every $X \in \Ult_\fin$ with at least two points can be reconstructed from $\Bee_X$: the points of $X$ are exactly the elements of singleton balls in $\Bee_X$.
    Hence, the functor $\Bee_\fin\maps \Ult^+_\fin \to \Ball_\fin$ is injective on objects.
    By the above lemmas this functor is also essentially surjective on objects and full.
    Altogether, it is essentially a quotient: the categories have essentially the same objects but some morphisms in homsets are identified.
\end{uwgi}

In  Figure~\ref{fig:zoo} we summarize the categories and functors introduced above.

\begin{figure}[!ht] 
\centering
\begin{tikzpicture}[
    x = {(6em, 0em)},
    y = {(0em, 4em)},
    label/.style = {
        edge label={#1},
        font=\footnotesize,
    },
]
    \node (I) at (0, 0) {$\UltIso^E$};
    \node (U) at (1, 0) {$\Ult$};
    \node (TL) at (2, 1) {$\LevTree^0$};
    \node (T) at (3, 1) {$\Tree$};
    \node (B) at (2, -1) {$\Ball$};
    \node (A) at (3, -1) {$\ATree$};
    \node (L) at (3, 0) {$\Lin^0$};

    \graph{
        (I) ->[label=$\subs$] (U) ->[label=$D$, pos=0.6] (L),
        (U) ->[label=$\Bee$, inner sep=2pt, swap] (B) ->[label=$\Lev$, swap, inner sep=2pt] (L),
        (B) -> (A),
        (U) ->[label=$\Beebar$, inner sep=2pt, bend left=10] (TL) ->[label=$\Lev$, inner sep=2pt] (L),
        (TL) ->[label=$\Max$, inner sep=1pt, pos=0.4, bend left=10] (U),
        (TL) -> (T),
    };
\end{tikzpicture}

\caption{The introduced categories and functors between them.}
\label{fig:zoo}
\end{figure}

\subsection{Topologies on ultrametric spaces and their automorphism groups}\label{sec:topandCau}

As with classical ultrametric spaces, we have the metric topology and uniformity: a subset $U \subs X$ is open if for every $a \in U$ there is some $r \in D_X \setminus \set{0}$ such that $B_r(a) = \set{x \in X: d(x, a) < r} \subs U$, and each $r \in D_X \setminus \set{0}$ induces the partition $\set{B_r(a): a \in X}$. The family of these partitions forms a basis of a uniformity on $X$.
Hence, topological and uniform notions like a subset being dense or a space being uniformly complete are well-defined.

We note that a dc-embedding $f\maps X \to Y$ (or even an isometric embedding with $D_X \subsetneq D_Y$) may not be continuous.
The key ingredient is that $D_f$ is \emph{continuous at $0$}, i.e. there is no $\eps \in D_Y$ such that $0 < \eps < D_f\im{D_X \setminus \set{0}}$.
An ultrametric space $X$ is \emph{discrete} if for every $x \in X$ there is $\eps \in D_X \setminus \set{0}$ such that $B_\eps(x) = \set{x}$, and it is \emph{uniformly discrete} if moreover $\eps$ can be chosen independently of $x$.
\begin{lm} \label{thm:dc-continuity}
    Let $f\maps X \to Y$ be a dc-embedding of ultrametric spaces.
    \begin{enumerate}
        \item $f$ is continuous if and only if $D_f$ is continuous at $0$ or $X$ is discrete.
        \item $f$ is uniformly continuous if and only if $D_f$ is continuous at $0$ or $X$ is uniformly discrete.
    \end{enumerate}
    In particular, a dc-isomorphism is always a (uniform) homeomorphism.
\end{lm}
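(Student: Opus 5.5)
The plan is to reduce everything to a single inequality and then handle the two directions of each equivalence separately. For $x \in X$ and $r \in D_X \setminus \set{0}$ the dc-embedding property gives
\[
    f\im{B_r(x)} = B_{D_f(r)}(f(x)) \cap f\im{X},
\]
because $d(f(x), f(y)) < D_f(r)$ holds if and only if $d(x,y) < r$, as $D_f$ is an order embedding. So continuity of $f$ at $x$ asks: for every $\eps \in D_Y \setminus \set{0}$, is there $r \in D_X \setminus \set{0}$ with $f\im{B_r(x)} \subs B_\eps(f(x))$? Uniform continuity asks the same with $r$ independent of $x$.

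For the sufficiency directions I would argue as follows. If $D_f$ is continuous at $0$, then given $\eps > 0$ in $D_Y$ there is $r \in D_X \setminus \set{0}$ with $D_f(r) \leq \eps$. This is exactly the negation of $0 < \eps < D_f\im{D_X \setminus \set{0}}$, using linearity of $D_Y$. Then for every $x$ we get $f\im{B_r(x)} \subs B_{D_f(r)}(f(x)) \subs B_\eps(f(x))$, which gives uniform continuity, and hence continuity. If instead $X$ is discrete (resp.\ uniformly discrete), pick $\eps_x$ with $B_{\eps_x}(x) = \set{x}$ (resp.\ a uniform $\eps$). The image of this ball is a single point, so $f$ is trivially continuous (resp.\ uniformly continuous).

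For necessity, suppose $D_f$ is not continuous at $0$, and fix $\eps \in D_Y$ with $0 < \eps < D_f(r)$ for all $r \in D_X \setminus \set{0}$.
\begin{itemize}
    \item If $f$ is continuous at $x$, there is $r$ with $f\im{B_r(x)} \subs B_\eps(f(x))$. For $y \in B_r(x)$ with $y \neq x$ we would have $d(f(x), f(y)) = D_f(d(x,y)) > \eps$, a contradiction. Hence $B_r(x) = \set{x}$, so $x$ is isolated. As $x$ is arbitrary, $X$ is discrete.
    \item The same argument with a uniform $r$ shows that $X$ is uniformly discrete.
\end{itemize}
This settles both items. The only delicate point I anticipate is the edge cases. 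The first is when $D_X = \set{0}$, i.e.\ $X$ has at most one point; then $X$ is vacuously uniformly discrete, or one can note that no $r$ exists but the statement still holds. The second is checking that "continuous at $0$" is phrased via $D_Y$, as the argument above requires.

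For the final claim, let $f$ be a dc-isomorphism. Then $D_f$ is an order isomorphism $D_X \to D_Y$, so it is surjective. It is therefore continuous at $0$: any $\eps > 0$ in $D_Y$ equals $D_f(r)$ for some $r > 0$, so $\eps$ is not below all of $D_f\im{D_X \setminus \set{0}}$. By item (2), $f$ is uniformly continuous. The same reasoning applied to $f^{-1}$, whose distance part $D_f^{-1}$ is also an order isomorphism, shows that $f^{-1}$ is uniformly continuous. Hence $f$ is a uniform homeomorphism.
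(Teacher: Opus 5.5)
Your proof is correct and is essentially the paper's argument. It uses the same $\eps$--$\delta$ reformulation of (uniform) continuity and gets sufficiency from some $\delta\in D_X\setminus\set{0}$ with $D_f(\delta)\leq\eps$. Necessity comes, as in the paper, from observing that an $\eps$ witnessing discontinuity of $D_f$ at $0$ forces $B_\delta(x)=\set{x}$. You also make explicit the discrete-case sufficiency and the consequence for dc-isomorphisms, both of which the paper leaves to the reader.

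One small correction to your edge-case remark. For nonempty $X$ with $D_X=\set{0}$, $X$ is not (uniformly) discrete under the paper's definition, since no $\eps\in D_X\setminus\set{0}$ exists, so it is not ``vacuously uniformly discrete''. However, the $\eps$--$\delta$ condition then also fails whenever $D_Y\neq\set{0}$, so your second alternative is the correct one: the equivalences still hold as stated.
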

\begin{proof}
    Continuity means that for every $\eps \in D_Y \setminus \set{0}$ and $x \in X$ there is $\delta \in D_X \setminus \set{0}$ such that $f\im{B_\delta(x)} \subs B_\eps(f(x))$, and uniform continuity means the same with $\delta$ not depending on $x$.
    If $D_f$ is continuous at $0$, then we have $\delta \in D_X \setminus \set{0}$ with $D_f(\delta) \leq \eps$, and so $f$ is uniformly continuous.

    On the other hand, if $\eps$ witnesses that $D_f$ is not continuous at $0$ and $\delta$ witnesses that $f$ is (uniformly) continuous, then $B_\delta(x) = \set{x}$, and so $X$ is (uniformly) discrete.
\end{proof}

We say that an ultrametric space $X$ is \emph{Cauchy complete} if it is complete with respect to the induced uniformity, which means that every \emph{Cauchy chain} of balls, i.e. a chain of balls $\C$ such that for every $r \in D_X \setminus \set{0}$ there is $B \in \C$ with $B \subs B_r(x)$ for some $x$, has a non-empty intersection, which is necessarily a singleton.
It does not matter if we consider open, closed, precise, or adjacent balls: indeed a Cauchy chain $\C$ either contains a singleton $\set{x}$, which is then its intersection,
or else for any $B \in \C$ there is $B' \in \C$ such that $B \supsetneq B'$.
Then for $x \in B'$ and $y \in B \setminus B'$ we have $B \sups \bar{B}(x, y) \sups B(x, y) \sups B'$.

The \emph{Cauchy-completion} $\overline{X}$ of an ultrametric space $X$, which is the unique Cauchy complete extension having $X$ as a dense subspace and having the same distance set, can be constructed as follows.
The space $\seq{\clo{X},d_{\clo{X}},D_X}$ consists of all maximal Cauchy chains $\C$ of open balls.
The distance $d_{\clo{X}}: \clo{X} \times \clo{X} \to D_X$ is defined as $d_{\overline{X}}(\C,\C')=d(B,B')$, where $B \in \C$, $B' \in \C'$ are any disjoint balls. Note that by the ultrametric triangle inequality, this does not depend on the choice of $B$, $B'$.
The mapping $x \mapsto \{B_r(x): r \in D_X \setminus \set{0}\}$ is an isometric embedding of $X$ onto a dense subspace of its Cauchy-completion $\clo{X}$.

\begin{observation} \label{exttoCauchy}
    For every uniformly continuous dc-embedding $f\maps X \to Y$, the unique uniformly continuous map $\bar{f}\maps \clo{X} \to \clo{Y}$ is a dc-embedding with $D_{\bar{f}} = D_f$ since for every $\bar{x}, \bar{y} \in \clo{X}$ with $r = d(\bar{x}, \bar{y}) > 0$ there are $x \in B_r(\bar{x}) \cap X$ and $y \in B_r(\bar{y}) \cap X$, and so we have 
    $d(\bar{f}(\bar{x}), \bar{f}(\bar{y})) = d(f(x), f(y)) = D_f(d(x, y)) = D_f(d(\bar{x}, \bar{y}))$.
    
    It follows from the uniqueness that $\clo{g \cmp f} = \bar{g} \cmp \bar{f}$.
    In particular, we have the canonical group embedding $\alpha\maps \Aut(X) \to \Aut(\clo{X})$ mapping $f \mapsto \bar{f}$, and $\Aut(X)$ can be identified with $\set{f \in \Aut(\clo{X}): f\im{X} = X}$.
\end{observation}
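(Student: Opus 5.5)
The plan is to build $\bar f$ from the standard extension theorem for uniformly continuous maps into complete uniform spaces, and then get the dc-property from density plus the fact that, in an ultrametric space, distances to a limit point are eventually constant along approximating sequences. Recall that $\clo{X}$ has the same distance set $D_X$ and contains $X$ as a dense subspace. So for $\bar x \in \clo{X}$ we may pick a sequence (or Cauchy chain) in $X$ converging to $\bar x$. Its image under $f$ is Cauchy in $Y$ because $f$ is uniformly continuous, so it converges in $\clo{Y}$. The standard argument shows that this defines the unique uniformly continuous map $\bar f\maps \clo{X} \to \clo{Y}$ extending $f$. We set $D_{\bar f} = D_f$, which is legitimate because $D_{\clo{X}} = D_X$ and $D_{\clo{Y}} = D_Y$.

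Next we verify that $\bar f$ is a dc-embedding. Let $\bar x \neq \bar y$ and $r = d(\bar x, \bar y) > 0$. By density choose $x \in B_r(\bar x) \cap X$ and $y \in B_r(\bar y) \cap X$; the ultrametric inequality then gives $d(x, y) = r$. The key step is to show $d(\bar f(\bar x), f(x)) < D_f(r)$.
\begin{itemize}
\item Take $x_n \in X$ with $x_n \to \bar x$. For large $n$ we have $d(x_n, \bar x) < d(\bar x, x)$ whenever $\bar x \neq x$, so $d(x_n, x) = d(\bar x, x) =: s < r$ is eventually constant. (If $\bar x = x$ the claim is trivial.)
\item Hence $d(f(x_n), f(x)) = D_f(s)$ is eventually constant, and since $f(x_n) \to \bar f(\bar x)$, the ultrametric inequality gives $d(\bar f(\bar x), f(x)) = D_f(s) < D_f(r)$.
\end{itemize}
Symmetrically, $d(\bar f(\bar y), f(y)) < D_f(r)$. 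Since $d(f(x), f(y)) = D_f(r)$, the ultrametric inequality yields $d(\bar f(\bar x), \bar f(\bar y)) = D_f(r) = D_f(d(\bar x, \bar y))$. Injectivity follows because $D_f(r) > 0$ for $r > 0$. I expect this limit-stabilization step to be the only real point needing care; everything else is formal.

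Functoriality comes from uniqueness. Both $\clo{g \cmp f}$ and $\bar g \cmp \bar f$ are uniformly continuous maps extending $g \cmp f$, so they coincide; likewise $\clo{\id{X}} = \id{\clo{X}}$. By Lemma~\ref{thm:dc-continuity}, every dc-automorphism of $X$ is uniformly continuous. Therefore $\alpha(f) = \bar f$ is defined on $\Aut(X)$, with $\bar f^{-1} = \clo{f^{-1}}$ by functoriality, so $\alpha$ is a group homomorphism into $\Aut(\clo{X})$. It is injective because $\bar f$ restricts to $f$ on $X$.

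Finally, we show that the image of $\alpha$ is exactly $\set{h \in \Aut(\clo{X}) : h\im{X} = X}$. Let $h \in \Aut(\clo{X})$ with $h\im{X} = X$. Since $h$ is a bijection, $h\restr{X}$ is a bijection of $X$, and it is a dc-embedding with $D$-part $D_h$, an automorphism of $D_{\clo{X}} = D_X$; hence $h\restr{X} \in \Aut(X)$. By Lemma~\ref{thm:dc-continuity}, $h$ is uniformly continuous, so uniqueness of extensions gives $h = \clo{h\restr{X}} = \alpha(h\restr{X})$. Conversely, every $\alpha(f)$ maps $X$ onto $X$.
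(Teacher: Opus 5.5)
Your proof is correct and follows the paper's argument: you approximate $\bar x,\bar y$ by points of $X$ within distance $<r$, apply the ultrametric inequality, and get functoriality and the description of $\Aut(X)$ from uniqueness of uniformly continuous extensions. You also supply the justification of $d(\bar f(\bar x), f(x)) < D_f(r)$, which the paper leaves implicit. One small caveat: for an abstract $D_X$ whose positive part has uncountable coinitiality, points of $\clo{X}\setminus X$ need not be limits of sequences from $X$, so in the key step replace the sequence by continuity of $\bar f$ at $\bar x$ (e.g.\ choose $x\in B_r(\bar x)\cap X$ directly so that $d(f(x),\bar f(\bar x))<D_f(r)$), or use nets.
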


On $\Aut(X)$ we consider two topologies:
\begin{itemize}
    \item The topology of pointwise convergence, viewing $X$ as a discrete two-sorted structure $X \cup D_X$, i.e. the neighborhood basis at every $f \in \Aut(X)$ consists of the sets
    \[
        N_A(f) = \set{g \in \Aut(X): g(x) = f(x)\text{ for $x \in A$, and $D_g(r) = D_f(r)$ for $r \in D_A$}},
    \]
    where $A \subs X$ is a finite subspace.
    \item The topology of pointwise convergence, viewing $X$ as a topological space with the topology induced by the ultrametric, i.e. the neighborhood basis at every $f \in \Aut(X)$ consists of the sets
    \[
        N_{A, r}(f) = \set{g \in \Aut(X): d(g(x), f(x)) < r \text{ for every $x \in A$}},
    \]
    where $A \subs X$ is a finite subspace and $r \in D_X \setminus \set{0}$.
\end{itemize}

    The first topology is a standard choice for any countable structure $M$, and turns $\Aut(M)$ into a Polish group. We will use this topology without mention in cases like $\Aut(\Qpos)$ and $\Aut(\Bee_X)$.
The second topology makes a good sense only for precise spaces $X$ as there is no way how the distance part can be controlled on unused distances.

We make a convention that $\Aut(X)$ denotes the group with the first topology, while $\AutM(X)$ denotes the group with the second topology.
Similarly we denote their respective topological subgroups $\Iso(X)$ and $\IsoM(X)$.

Clearly, the topology of $\AutM(X)$ is coarser than the topology of $\Aut(X)$.
It is a standard fact that $\Aut(X)$ is a Polish group if $X$ is countable (and for general $X$, $\Aut(X)$ is a completely uniformizable topological group).
We will observe that $\AutM(X)$ is a topological group.

\begin{observation} \label{thm:distance_neighborhood}
    Let $f \in \AutM(X)$.
    If $D_f(r) = q$,  then also $D_g(r) = q$ for every $g \in N_{\set{x, y}, q}(f)$ where  $x, y \in X$ are such that $d(x,y)=r$.
    This follows from the ultrametric triangle inequality applied to $g(x), f(x), f(y), g(y)$.
  
\end{observation}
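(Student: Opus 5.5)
The claim is that if $f \in \AutM(X)$ and $D_f(r) = q$, then $D_g(r) = q$ for every $g$ in the neighborhood $N_{\set{x, y}, q}(f)$, where $x, y \in X$ are any points with $d(x, y) = r$. The plan is to compare the four points $g(x), f(x), f(y), g(y)$ using the ultrametric triangle inequality.

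First I dispose of the degenerate case. If $r = 0$, then $D_g(0) = 0 = D_f(0)$, because $\Lin^0$-maps preserve the least element, so there is nothing to show. From now on assume $r > 0$. Then $x \neq y$, and so $q = D_f(r) = d(f(x), f(y)) > 0$. This makes $N_{\set{x, y}, q}(f)$ a legitimate basic neighborhood.

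Fix $g \in N_{\set{x, y}, q}(f)$. By definition of this neighborhood we have $d(g(x), f(x)) < q$ and $d(g(y), f(y)) < q$. Since $g$ is a dc-automorphism, $D_g(r) = d(g(x), g(y))$, so it suffices to show $d(g(x), g(y)) = q$. I will do this in two steps.

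\emph{Step 1.} Apply the ultrametric triangle inequality to the triangle $g(x), f(x), f(y)$. Its sides satisfy $d(g(x), f(x)) < q = d(f(x), f(y))$. In an ultrametric space every triangle is either equilateral or isosceles with the two longest sides equal. Since the side $d(g(x), f(x))$ is strictly shorter than $d(f(x), f(y))$, the triangle is not equilateral, and the two longest sides must be $d(f(x), f(y))$ and $d(g(x), f(y))$. Hence $d(g(x), f(y)) = q$.

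\emph{Step 2.} Apply the same principle to the triangle $g(x), f(y), g(y)$. Here $d(g(y), f(y)) < q = d(g(x), f(y))$, so by the same reasoning $d(g(x), g(y)) = q$.

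Therefore $D_g(r) = d(g(x), g(y)) = q$, as claimed. There is no real obstacle in this argument. The only point needing care is the implicit positivity of $r$: it guarantees $q > 0$, so that the neighborhood is defined and the strict inequalities are meaningful.
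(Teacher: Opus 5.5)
Your proof is correct and is exactly the argument the paper points to: you apply the ultrametric triangle inequality twice along $g(x), f(x), f(y), g(y)$, first to get $d(g(x), f(y)) = q$ and then $d(g(x), g(y)) = q = D_g(r)$. Your separate treatment of $r = 0$ is a harmless detail the paper leaves implicit; it ensures $q > 0$, so that $N_{\set{x, y}, q}(f)$ is a legitimate basic neighborhood.
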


\begin{lm} \label{thm:AutM}
    $\AutM(X)$ is a topological group for any precise ultrametric space $X$.
\end{lm}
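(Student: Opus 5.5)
We must show that multiplication and inversion are continuous with respect to the basis $N_{A,r}(f)$. Since the $N_{A,r}(f)$ are determined by finitely many pointwise conditions, it is enough to prove continuity of $(f,g) \mapsto g \cmp f$ and of $f \mapsto f^{-1}$ at each point. We will repeatedly use that every $h \in \Aut(X)$ carries balls to balls: $h\im{B_r(x)} = B_{D_h(r)}(h(x))$.

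\textbf{Composition.} Fix $f, g$, a finite $A \subs X$ and $\eps \in D_X \setminus \set{0}$; we want neighborhoods $N_{A,\delta}(f)$ and $N_{A',\eps'}(g)$ whose product lies in $N_{A,\eps}(g \cmp f)$. Take $A' = f\im{A}$ and $\eps' = \eps$, so that $d(g'(f(a)), g(f(a))) < \eps$ for $a \in A$. It remains to arrange $d(g'(f'(a)), g'(f(a))) < \eps$, i.e.\ $D_{g'}(d(f'(a), f(a))) < \eps$. Let $\delta = D_g^{-1}(\eps)$ if $\eps$ lies in the range of $D_g$, or otherwise a suitable preimage bound; then we need $D_{g'}(\delta) \le D_g(\delta)$ for $g'$ near $g$.

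Here precision is used. Choose $x, y$ with $d(x,y) = \delta$. By Observation~\ref{thm:distance_neighborhood}, adding $x, y$ to $A'$ and shrinking $\eps'$ to $D_g(\delta)$ forces $D_{g'}(\delta) = D_g(\delta)$ for all $g'$ in the neighborhood. To guarantee $D_g(\delta) \le \eps$ we need a $\delta > 0$ with $D_g(\delta) \le \eps$. Since $g^{-1}$ is an automorphism, $\delta = D_g^{-1}(\eps)$ works, because $D_g$ is bijective on $D_X$; indeed $X$ is precise and $g$ is surjective on points, so $D_g$ is surjective. Thus $\delta = D_{g}^{-1}(\eps)$ is a distance, realized by some pair $x, y$. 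Then for $f' \in N_{A,\delta}(f)$ and $g' \in N_{f[A] \cup \set{x,y}, \eps}(g)$, the ultrametric inequality gives $d(g'f'(a), gf(a)) < \eps$.

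\textbf{Inversion.} Given $f$, $A$ and $\eps$, we want $d(g^{-1}(a), f^{-1}(a)) < \eps$ for $g$ near $f$. Put $b = f^{-1}(a)$. Then
\[
d(g^{-1}(a), b) = D_{g^{-1}}\bigl(d(a, g(b))\bigr) = D_g^{-1}\bigl(d(f(b), g(b))\bigr).
\]
So it suffices to require $d(f(b), g(b)) < D_g(\eps)$. Take $B = f^{-1}\im{A} \cup \set{x,y}$ with $d(x,y) = \eps$ (possible by precision) and radius $q = D_f(\eps)$. By Observation~\ref{thm:distance_neighborhood}, every $g \in N_{B,q}(f)$ satisfies $D_g(\eps) = D_f(\eps) = q$, and $d(f(b), g(b)) < q$, which gives the claim.

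The main obstacle is exactly this control of $D_g$ at the one relevant distance. It is resolved by precision together with Observation~\ref{thm:distance_neighborhood}; that both $D_g$ and $D_f$ are bijections of $D_X$ for automorphisms of a precise space is what makes the preimages $D_g^{-1}(\eps)$ legitimate distances. The Hausdorff property and the fact that the sets $N_{A,r}(f)$ form a group topology basis are routine.
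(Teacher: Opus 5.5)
Your proof is correct and follows essentially the same route as the paper's. For composition you take $\delta = D_g^{-1}(\eps)$, realize it by a pair $x,y$ using precision, and apply Observation~\ref{thm:distance_neighborhood} on $N_{f[A]\cup\set{x,y},\eps}(g)$ to get $D_{g'}(\delta)=\eps$; for inversion you take $B = f^{-1}[A]\cup\set{x,y}$ with $d(x,y)=\eps$ and radius $D_f(\eps)$, exactly as the paper does. The hedging about $\eps$ possibly not lying in the range of $D_g$ (and the ``shrinking'' of $\eps'$) is unnecessary, since $D_g$ is a bijection of $D_X$ by the definition of a dc-automorphism.
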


\begin{proof}
    To prove continuity of the composition, let $f, f' \in \AutM(X)$, $A \subs X$ be finite, and $q \in D_X \setminus \set{0}$.
    For $r = D_{f'}^{-1}(q)$ there are points $x, y$ with $d(x, y) = r$.
    We put $B = f[A] \cup \set{x, y}$.
    Then $N_{B, q}(f') \cmp N_{A, r}(f) \subs N_{A, q}(f' \cmp f)$:
    for every $a \in A$, $g \in N_{A, r}(f)$, and $g' \in N_{B, q}(f')$ we have
    \begin{itemize}
        \item $d(g(a), f(a)) < r$ since $a \in A$ and $g \in N_{A, r}(f)$,
        \item $d(g'(g(a)), g'(f(a))) < q$ since $D_{g'}(r) = q$, by Observation~\ref{thm:distance_neighborhood}, and the fact that $x, y \in B$,
        \item $d(g'(f(a)), f'(f(a))) < q$ since $f(a) \in B$ and $g' \in N_{B, q}(f')$,
    \end{itemize}
    and altogether $d(g'(g(a)), f'(f(a))) < q$.

    For continuity of the inverse we similarly have 
    $N_{A, q}(f)^{-1} \subs N_{B, r}(f^{-1})$ where $f, B, r$ are given, and $q = D_f(r)$ and $A = f^{-1}[B] \cup \set{x, y}$ where $d(x, y) = r$.
    This is because $d(g^{-1}(b), f^{-1}(b)) < r$ if and only if $d(f(a), g(a)) < q$ where $f(a) = b$ (and $a = f^{-1}(b)$) and $g \in N_{A, q}(f)$ and so $D_g(r) = q$ (and $r = D_{g^{-1}}(q)$).
\end{proof}

For every ultrametric space $X$, by the \emph{canonical projection} $\pi\maps \Aut(X) \to \Aut(D_X)$ we mean the homomorphism defined by $\pi(f) = D_f$.

\begin{observation} \label{thm:canonical_projection_continuous}
    The canonical projection $\pi\maps \Aut(X) \to \Aut(D_X)$ is continuous for every space $X$ as $D_X$ is already part of the structure of $X$.
    The canonical projection $\pi\maps \AutM(X) \to \Aut(D_X)$ is continuous for every precise space $X$ by Observation~\ref{thm:distance_neighborhood}.

    It follows that $\Iso(X)$ and $\IsoM(X)$ are closed normal subgroups of $\Aut(X)$ and $\AutM(X)$, respectively.
\end{observation}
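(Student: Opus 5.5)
The observation has three claims. The first is continuity of $\pi$ on $\Aut(X)$. The second is continuity of $\pi$ on $\AutM(X)$ for precise $X$. The third is that $\Iso(X)$ and $\IsoM(X)$ are closed normal subgroups. I would prove them in that order. In every case $\Aut(D_X)$ carries the pointwise convergence topology with $D_X$ discrete, so a basic neighbourhood of $\pi(f) = D_f$ is determined by finitely many distances $r_1, \dots, r_n \in D_X$. It consists of those $h \in \Aut(D_X)$ with $h(r_i) = D_f(r_i)$.

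For $\Aut(X)$, the plan is to produce a basic neighbourhood $N_A(f)$ with $D_A \ni r_1, \dots, r_n$. A subspace $A$ of $X$ is a two-sorted substructure, so I would take a finite $A \subs X$ and enlarge its distance sort to a finite $D_A \leq D_X$ containing $0, r_1, \dots, r_n$. If the convention requires $D_A$ to be generated by $A$, I would instead use preciseness-free reasoning: $D_X$ is a sort of the structure, so pointwise control of distance values is built into the topology. Then $g \in N_A(f)$ gives $D_g(r_i) = D_f(r_i)$, hence $\pi\im{N_A(f)}$ lies in the target neighbourhood. This is simply the continuity of restricting an automorphism of a many-sorted structure to one sort.

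For $\AutM(X)$ with $X$ precise, I would first use preciseness to choose, for each $r_i > 0$, points $x_i, y_i$ with $d(x_i, y_i) = r_i$. Set $q_i = D_f(r_i)$, let $q = \min_i q_i > 0$, and let $A = \set{x_i, y_i : i \leq n}$. For $g \in N_{A, q}(f)$ we have $g \in N_{\set{x_i, y_i}, q_i}(f)$ because $q \leq q_i$. Observation~\ref{thm:distance_neighborhood} then gives $D_g(r_i) = q_i$. The distance $0$ is fixed automatically, so $\pi\im{N_{A,q}(f)}$ is contained in the required neighbourhood. The only subtle point, and the one I expect to be the main obstacle, is making sure Observation~\ref{thm:distance_neighborhood} applies. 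That observation rests on the ultrametric inequality: $d(g(x), g(y)) = d(f(x), f(y))$ once both $d(g(x), f(x))$ and $d(g(y), f(y))$ are $< q_i$. This is exactly what the neighbourhood guarantees.

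Finally, $\Iso(X)$ is exactly $\ker \pi$, since an isometry is a dc-automorphism with $D_f = \id{D_X}$. It is therefore a normal subgroup. The trivial subgroup $\set{\id{D_X}}$ is closed in the Hausdorff group $\Aut(D_X)$, so its preimage under the continuous homomorphism $\pi$ is closed. The same argument applies verbatim to $\IsoM(X) = \ker(\pi\maps \AutM(X) \to \Aut(D_X))$, using the continuity just established.
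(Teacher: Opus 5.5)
Your proof is correct and follows the paper's own reasoning. Continuity on $\Aut(X)$ holds because the distance sort is controlled pointwise, and continuity on $\AutM(X)$ comes from realizing each distance via preciseness and applying Observation~\ref{thm:distance_neighborhood} with $q=\min_i D_f(r_i)$. Closed normality of $\Iso(X)$ and $\IsoM(X)$ then follows because they are kernels of continuous homomorphisms into a Hausdorff group; the paper states each step in one line, and you supply the routine details.
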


Recall that for every ultrametric space $X$ we denote by $\Bee_X$ the family of all adjacent balls of $X$ and that this gives a functor $\Bee\maps \Ult \to \Ball$ with $\ell \cmp \Bee = D$.

For every ultrametric space $X$ we consider the homomorphism $\beta\maps \Aut(X) \to \Aut(\Bee_X)$ defined by $\beta(f) = \Bee_f$, with $\ell_{\beta_f} = D_f$.
Clearly, $\beta$ is one-to-one: if $f(x) \neq x$, then $\Bee_f(B) \neq B$ for $B = B(x, f(x))$.

\begin{lm} \label{thm:dense_embedding}
    If $f\maps X \to Y$ is a dc-embedding into a precise space such that $f[X] \subs Y$ is dense, then $D_f\maps D_X \to D_Y$ is an $\Lin^0$-isomorphism, and $\Bee_f\maps \Bee_X \to \Bee_Y$ is a $\Ball$-isomorphism.    
\end{lm}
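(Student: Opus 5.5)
Since $\Bee_f$ is already a $\Ball$-map by Proposition~\ref{prop:thebee}, and $\Ball$-maps are embeddings in both sorts, both parts of the statement reduce to surjectivity. We must show that $D_f$ is onto $D_Y$ and that $\Bee_f$ is onto $\Bee_Y$. The inverse maps then automatically preserve order, adjacency and levels, because $\ATree$-maps reflect order and adjacency and $D_f$ is an order embedding. The single tool for both parts is the observation already used in the introduction: by density and the ultrametric inequality, distances and adjacent-ball pairs of $Y$ can be realized by points of $f[X]$.

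\textbf{Surjectivity of $D_f$.} Let $s \in D_Y$. The case $s = 0$ is immediate, since $D_f$ preserves the least element. For $s > 0$, preciseness of $Y$ gives $y, y' \in Y$ with $d_Y(y, y') = s$. By density of $f[X]$ there are $x, x' \in X$ with $d_Y(f(x), y) < s$ and $d_Y(f(x'), y') < s$. The ultrametric triangle inequality, applied twice, then yields $d_Y(f(x), f(x')) = s$. Since $f$ is a dc-embedding, this gives $D_f(d_X(x, x')) = s$.

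\textbf{Surjectivity of $\Bee_f$.} Every adjacent ball of $Y$ has the form $B(y, y')$ with $y \neq y'$ and $s = d_Y(y, y')$. Choose $x, x'$ exactly as above. Then $f(x) \in B_s(y) = B(y, y')$ and $f(x') \in B(y', y)$. By Observation~\ref{thm:unique_ball_pair}, $\seq{B(y,y'), B(y',y)}$ is the unique adjacent pair separating $f(x)$ from $f(x')$, so it coincides with $\seq{B(f(x), f(x')), B(f(x'), f(x))}$. Alternatively, this follows directly from $d_Y(f(x), f(x')) = s$ and $d_Y(f(x), y) < s$. Hence $B(y, y') = \Bee_f(B(x, x'))$.

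\textbf{Inverses.} It remains to check that the inverses are morphisms. $D_f^{-1}$ is an order isomorphism preserving $0$. For $\Bee_f^{-1}$, note that a bijective $\ATree$-map between adjacency trees has an inverse that preserves and reflects $\leq$ and $\sim$, since $\Bee_f$ reflects both. The inverse is level-carrying with $\Lev(\Bee_f^{-1}) = D_f^{-1}$.

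The only point needing care is where preciseness of $Y$ enters. Without it, a distance of $D_Y$ might not be realized at all, and then nothing constrains $D_f$ to hit it. Density alone only controls attained distances. Once preciseness supplies the pair $y, y'$, everything else is routine.
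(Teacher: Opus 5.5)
Your proof is correct and follows the paper's argument. Preciseness of $Y$ gives a pair realizing each distance. Density plus the ultrametric inequality transfers that pair (and with it the adjacent ball $B(y,y')$) to points of $f[X]$, which yields surjectivity of $D_f$ and $\Bee_f$. You are slightly more careful than the paper in two places: you treat $s=0$ separately, and you spell out why a bijective $\Ball$-map has a $\Ball$-map inverse; the paper leaves both implicit.
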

\begin{proof}
    For $r \in D_Y$ there are points $y, y' \in Y$ with $d(y, y') = r$, and, by density, there are points $x, x' \in X$ with $d(f(x), y), d(f(x'), y') < r$.
    Hence, $d(f(x), f(x')) = r$, and so $D_f$ is surjective.

    Similarly, for every $r\in D_Y\setminus\{0\}$ and $B(y, y') \in \Bee_Y$ with $d(y,y')=r$ there are $x, x' \in X$ with $d(f(x), y), d(f(x'), y')< r$.
    Hence, $B(y, y') = B(f(x), f(x')) = \Bee_f(B(x, x'))$, and so $\Bee_f$ is surjective.
\end{proof}

We say that a point $x \in X$ in an ultrametric space is \emph{precisely isolated} if there is $y \in X$ such that $B(x, y) = \set{x}$.
\begin{prop}\label{prop: beta}
    Let $X$ be a precise ultrametric space such that every isolated point is precisely isolated.
    \begin{enumerate}
        \item The map $\beta\maps \AutM(X) \to \Aut(\Bee_X)$ is an embedding of topological groups.
        \item If $X$ is separable, then $\Aut(\Bee_X)$ is Polish, and so $\AutM(X)$ is separable metrizable.
        \item If $X$ is separable and complete, then $\beta\maps \AutM(X) \to \Aut(\Bee_X)$ is an isomorphism of Polish groups.
        \item If $X$ is separable, then $\bar{\beta}\maps \AutM(\clo{X}) \to \Aut(\Bee_X)$ is an isomorphism of Polish groups.
        Here $\bar{\beta}$ denotes $\Bee_i^{-1} \cmp \beta_{\clo{X}}\maps \AutM(\clo{X}) \to \Aut(\Bee_{\clo{X}}) \to \Aut(\Bee_X)$ where $i\maps X \to \clo{X}$ is the inclusion.
    \end{enumerate}
\end{prop}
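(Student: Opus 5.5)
For (1) I would first show that $\beta$ is continuous with respect to the metric topology, then that its inverse is continuous on the image. $\beta$ is already known to be an injective homomorphism. Since both groups are topological groups (Lemma~\ref{thm:AutM}), it suffices to check continuity at the identity in both directions.

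\textbf{Continuity of $\beta$.} A basic neighbourhood of $\id{}$ in $\Aut(\Bee_X)$ fixes finitely many balls $B(x_i,y_i)$ together with their levels $r_i = d(x_i,y_i)$. Let $q = \min_i r_i > 0$ and $A = \set{x_i, y_i}_i$. For $g \in N_{A,q}(\id{})$, Observation~\ref{thm:distance_neighborhood} gives $D_g(r_i) = r_i$. Then $g(x_i) \in B_{r_i}(x_i)$, so
\[
\Bee_g(B(x_i,y_i)) = B_{r_i}(g(x_i)) = B(x_i,y_i).
\]

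\textbf{Continuity of $\beta^{-1}$ on the image.} Given a finite $A$ and $r \in D_X\setminus\set{0}$, I want finitely many balls whose fixation by $\Bee_g$ forces $d(g(a),a) < r$ for all $a \in A$. If $a$ is not isolated, there is $y$ with $0 < d(a,y) < r$ (precision is not even needed here). If $\Bee_g$ fixes $B(a,y)$ and fixes the distance $d(a,y)$, then $g(a) \in B(a,y) \subseteq B_r(a)$. Fixing the adjacent ball $B(y,a)$ as well fixes the pair, and the level part of $\Bee_g$ is $D_g$. If $a$ is isolated, it is precisely isolated, so $B(a,y) = \set{a}$ for some $y$, and fixing this ball forces $g(a) = a$. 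This step is where the hypothesis is used, and I expect it to be the main subtlety: a point can be isolated without any adjacent ball being a singleton.

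\textbf{Part (2).} If $X$ is separable with countable dense $Q$, then every adjacent ball equals $B(x,y)$ for some $x,y \in Q$, by the density argument in Lemma~\ref{thm:dense_embedding}. Hence $\Bee_X$ is a countable structure, and $\Aut(\Bee_X)$ is Polish. Then $\AutM(X)$ embeds into it by (1), so it is separable metrizable.

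\textbf{Part (3): surjectivity of $\beta$ for complete $X$.} Given $h \in \Aut(\Bee_X)$, for each $x$ I take a Cauchy chain of adjacent balls around $x$ and map it by $h$. The image is again a chain, and it is Cauchy because $\Lev(h)$ is an order automorphism of $D_X$, hence continuous at $0$ (precision gives $\Lev(\Bee_X) = D_X \setminus\set{0}$). By completeness its intersection is a single point, which I call $f(x)$. Isolated $x$ are handled by singleton adjacent balls, using precise isolation. I then verify that $f$ is a dc-automorphism with $D_f = \Lev(h)$ by the argument of Lemma~\ref{thm:B_star_surjective}, via Observation~\ref{thm:unique_ball_pair}. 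Bijectivity follows from applying the same construction to $h^{-1}$, and $\Bee_f = h$. A continuous bijective homomorphism that is a topological embedding is then an isomorphism.

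\textbf{Part (4).} Apply (3) to $\clo{X}$, which is separable, complete and precise. I need to check that every isolated point of $\clo{X}$ lies in $X$ and is precisely isolated there; this holds because isolated points of $\clo{X}$ lie in the dense set $X$. Lemma~\ref{thm:dense_embedding} shows that $\Bee_i$ is a $\Ball$-isomorphism, so $\bar{\beta}$ is a composition of isomorphisms.
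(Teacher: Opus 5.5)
Your proposal is correct and follows essentially the same route as the paper. In (1) you use the same neighbourhoods; your use of both $x_i,y_i$ together with Observation~\ref{thm:distance_neighborhood} to force $D_g(r_i)=r_i$ is slightly more careful than the paper's $N_{\set{x},q}(f)$. For (3) you transport chains of adjacent balls by $h$ and use completeness, and for (4) you use Lemma~\ref{thm:dense_embedding}, as the paper does; the only difference is that you verify the dc-property via the argument of Lemma~\ref{thm:B_star_surjective} and get bijectivity from $h^{-1}$, where the paper uses $B(x,y)=\max(\C_x\setminus\C_y)$ and the bijection $x\mapsto\C_x$, and the unstated details (countability of $D_X$ by precision, and that $h$ sends the leaf $\set{x}$ to a singleton since leaves of $\Bee_X$ are singletons) are routine.
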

\begin{proof}
    To prove~(1) we have to show that $\beta$ and its inverse are continuous. Let $f \in \AutM(X)$ and $B \in \Bee_X$.
    For $q = D_f(r(B))$ and any $x \in B$ we have $\set{g \in \AutM(X): g[B] = f[B]} = N_{\set{x}, q}(f)$, and so $\beta$ is continuous.
    On the other hand, if $x \in X$ and $q \in D_X \setminus \set{0}$, then there is $0 < q' \leq q$ such that $B_{q'}(f(x)) \in \Bee_X$ since every isolated point is precisely isolated.
    For $r = D_f^{-1}(q')$ we have $B_r(x) = f\preim{B_{q'}(f(x))} \in \Bee_X$, and so if $g[B_r(x)] = f[B_r(x)]$, then $g \in N_{\set{x}, q}(f)$.
    Hence, $\beta$ is an embedding.
    
    Claim~(2): as $X$ is separable, there is countable dense subset $A \subs X$, and so $\Bee_X = \set{B(x, y): x \neq y \in A}$ is countable.
    As $X$ is precise, $\Lev(\Bee_X) = D_X$ is countable as well.  Therefore $\Aut(\Bee_X)$ as the automorphism group of a countable structure is Polish.

    To prove (3) it is enough to show that $\beta$ is surjective.
    For every $x \in X$ let $\C_x$ be the maximal chain $\set{B(x, y): y \in X \setminus \set{x}} \subs \Bee_X$.
    Since every isolated point of $X$ is precisely isolated, either $\C_x$ has a minimal element, or $\inf\set{r(B): B \in \C_x} = 0$.
    Since $X$ is complete, $x \mapsto \C_x$ is a bijective correspondence between points and such chains.
    Hence, for $g \in \Aut(\Bee_X)$ and $x \in X$ we have $\set{g(B): B \in \C_x} = \C_{f(x)}$ for a unique point $f(x)$.
    This defines a bijection $f\maps X \to X$ such that $g(B) = f[B]$ for every $B \in \Bee_X$ since we have
    \[
        x \in B \iff B \in \C_x \iff g(B) \in \C_{f(x)} \iff f(x) \in g(B).
    \]
    It remains to show that $f$ is a dc-automorphism with $D_f = \ell_g$.
    For every $x \neq y \in X$ we have $d(x, y) = r(B(x, y))$, and $B(x, y) = \max_\subs(\C_x \setminus \C_y)$, and 
    \[
        \max(\C_{f(x)} \setminus \C_{f(y)}) = \max\set{g(B): B \in \C_x \setminus \C_y} = g(\max(\C_x \setminus \C_y)).
    \]
    Hence, $B(f(x), f(y)) = g(B(x, y))$ and $d(f(x), f(y)) = \ell_g(d(x, y))$.
    
    Claim~(4):
    by the first part of Lemma~\ref{thm:dense_embedding} we have that $D_{\clo{X}} = D_X$, so $\clo{X}$ is precise.
    By density, every isolated point of $\clo{X}$ is in $X$ and is precisely isolated.
    Hence, the claim follows from (3) applied to $\clo{X}$ and from the second part of Lemma~\ref{thm:dense_embedding}.
\end{proof}

\section{Generic ultrametric spaces}
\label{Sec:Fraisse}

We shall recall notions from Fraïssé theory and adapt them to our context.
Classical Fraïssé theory of countable one-sorted structures is well-known, see Fraïssé~\cite{Fraisse} and Hodges~\cite[Section~7.1]{Hodges}.
We intend to use the theory for categories of structures possibly with partial automorphisms.
We stress the category-theoretic aspects so that we can conveniently relate different classes by forgetful functors and use abstract principles to transfer Fraïssé-theoretic properties between them.
Category-theoretic Fraïssé theory was originally developed by Droste and Göbel~\cite{DrG} and Kubiś~\cite{Kub40}.

Let $\K$ be a category of structures. In particular, $\K$-objects are (many-sorted) structures, and $\K$-maps are all embeddings.
Fraïssé theory relates properties of the class $\K_\fin$ of finite structures and of countable structures that are colimits of countable direct sequences from $\K_\fin$, i.e. up to isomorphism they are unions of countable increasing chains.
We denote this class by $\sig\K_\fin$.
We assume that $\K$ is \emph{$\sigma$-complete} in the sense that it is closed under countable unions and their isomorphic copies (i.e. under colimits of countable sequences taken in the ambient category of all structures of a given many-sorted signature).

A sequence $(A_n, f_n)$ in $\K$ consists of $\K$-objects $A_n$ and $\K$-maps $f_n\maps A_n \to A_{n + 1}$, $n \in \N$.
We also introduce notation for compositions of the bonding maps: $f_m^n\maps A_m \to A_n$ for $m \leq n \in \N$.
The colimit is denoted by $(A_\infty, f^\infty_n)$, i.e. it consists of a $\K$-object $A_\infty$ and of $\K$-maps $f_n^\infty\maps A_n \to A_\infty$ forming a cone.

Below we summarize the relevant properties of $\K_\fin$ such as variants of the amalgamation, properties of objects from $\sig\K_\fin$ such as homogeneity and extension property, and we formulate a general Fraïssé theorem in our context.

We define the following properties for $\K_\fin$.
\begin{itemize}
    \item[(HP)] The {\it hereditary property}: for every $A\in \mathcal{K}_\fin$ and $B\leq A$ we have $B\in \mathcal{K}_\fin$.
    \item[(JEP)] The {\it joint embedding property}: for any $A,B\in\mathcal{K}_\fin$ there is $C\in \mathcal{K}_\fin$, which embeds both $A$ and $B$.
    \item[(AP)] The {\it amalgamation property}: for every $A\in \mathcal{K}_\fin$, $\alpha_1\colon A\to B$ and $\alpha_2\colon A\to C$ in $\mathcal{K}_\fin$ there is $D\in \mathcal{K}_\fin$ together with $\beta_1\colon B\to D$ and $\beta_2\colon C\to D$ such that $\beta_1\circ\alpha_1=\beta_2\circ \alpha_2$.
    In that case, we say that $A$ is an {\it amalgamation base} in $\mathcal{K}_\fin$.
    \item[(SAP)] The {\it strong amalgamation property} is a strengthening of the AP, where we additionally have $\beta_1(B)\cap \beta_2(C)=\beta_1(\alpha_1(A)) (=\beta_2(\alpha_2(A))) $.  In that case, we say that $A$ is a {\it strong amalgamation base}. 
    \item[(CAP)] The {\it cofinal amalgamation property}:  any $A_0\in \mathcal{K}_\fin$ embeds into $A\in \mathcal{K}_\fin$ that is an amalgamation base in $\mathcal{K}_\fin$.
    \item[(WAP)] The \emph{weak amalgamation property}:
    for every $A_0 \in \K_\fin$ there is an embedding $i\maps A_0 \to A$ in $\K_\fin$ that is an \emph{amalgamable map}, i.e. for any $\alpha_1\colon A \to B$ and $\alpha_2\colon A\to C$ in $\mathcal{K}_\fin$ there is $D\in \mathcal{K}_\fin$ together with $\beta_1\colon B\to D$ and $\beta_2\colon C\to D$ such that $\beta_1\circ \alpha_1\circ i=\beta_2\circ \alpha_2\circ i$ holds.
\end{itemize}

We say $\K_\fin \neq \emptyset$ is a \emph{Fraïssé class} if it satisfies JEP and AP and has countably many isomorphism types.
Note that we do not require $\K_\fin$ to be hereditary.

\medskip

\begin{df} \label{def:Fraisse}
We say that a structure $U \in \K$ 
\begin{itemize}
    \item is \emph{universal} for $\K_\fin$  if for every $A \in \K_\fin$ there is an embedding $f\maps A \to U$;
    \item is \emph{homogeneous} for $\K_\fin$ if for every $A, B \in \K_\fin$ such that $A, B \leq U$ and every isomorphism $f\maps A \to B$ there is $h \in \Aut(U)$ extending $f$, or equivalently, for every $A \in \K_\fin$ and all embeddings $f, g\maps A \to U$ there is $h \in \Aut(U)$ such that $h \cmp g  = f$;
    \item has the \emph{extension property} for $\K_\fin$ if for every $A, B \in \K_\fin$ and embeddings $f\maps A \to U$ and $g\maps A \to B$ there is an embedding $h\maps B \to U$ such that $h \cmp g = f$.
\end{itemize}
Similarly we say that a sequence $(A_n, f_n)$ in $\K_\fin$ 
\begin{itemize}
    \item is \emph{cofinal} if for every $A \in \K_\fin$ there is an embedding $f\maps A \to A_n$ for some $n$,
    \item has the \emph{extension property} if for every $A, B \in \K_\fin$ and embeddings $f\maps A \to A_m$ and $g\maps A \to B$ there is an embedding $h\maps B \to A_n$ for some $n \geq m$ such that $h \cmp g = f^n_m \cmp f$,
    \item is \emph{absorbing} if for every $B \in \K_\fin$ and an embedding $g\maps A_m \to B$ there is an embedding $h\maps B \to A_n$ for some $n \geq m$ such that $h \cmp g = f^n_m$, i.e. it has the extension property for $f = \id{A_m}$,
    \item is \emph{Fraïssé} if it has the extension property (which under AP is equivalent to just being absorbing) and is cofinal.
\end{itemize}
\end{df}

The following two theorems are essentially well known.
For the classical setting of hereditary classes of one-sorted structures see Hodges~\cite[Theorems~7.1.2 and 7.1.7]{Hodges}.
For Theorem \ref{thm:Fclass}, a construction explicitly using the notion of a Fraïssé sequence can be found in \cite[Corollary~3.8]{Kub40} – we apply it to the special case of a category of finite structures, so the properties of being essentially countable, having a countable dominating subcategory, and having a cofinal subcategory are all equivalent to having countably many isomorphism types.
For a version of the theorem that states both implications in even more general setting see \cite[Theorem~4.46]{BartosKubis}.

\begin{tw} \label{thm:Fclass}
    $\K_\fin$ has a Fraïssé sequence if and only if it is a Fraïssé class.
\end{tw}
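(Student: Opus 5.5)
The plan is to prove the two implications separately, working directly with the definitions from Definition~\ref{def:Fraisse}. Since $\K_\fin$ consists of finite structures in a fixed finite many-sorted signature, every object is determined up to isomorphism by finite data. Hence "countably many isomorphism types" is the only size condition we need, and all countability bookkeeping reduces to it.

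For the forward implication, let $(A_n, f_n)$ be a Fraïssé sequence in $\K_\fin$. Nonemptiness is clear, since the sequence has terms. For countably many isomorphism types, cofinality says every $A \in \K_\fin$ embeds into some $A_n$. A finite $A_n$ has only finitely many substructures, but $A$ need not be isomorphic to a substructure in a hereditary sense; we only need that $A$ is isomorphic to the image of an embedding into $A_n$, and there are finitely many such images. So there are at most countably many types in total. For JEP, given $A, B$, embed each into some $A_m$, $A_n$ by cofinality, and then compose with bonding maps into $A_{\max(m,n)}$. For AP, given $\alpha_1\maps A \to B$ and $\alpha_2\maps A \to C$, first embed $B$ into some $A_m$ via $f$ using cofinality. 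Then apply the extension property to $f \cmp \alpha_1\maps A \to A_m$ and $\alpha_2\maps A \to C$. This yields $h\maps C \to A_n$ with $h \cmp \alpha_2 = f^n_m \cmp f \cmp \alpha_1$. Then $D = A_n$, $\beta_1 = f^n_m \cmp f$ and $\beta_2 = h$ amalgamate.

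For the backward implication, fix a countable family $\set{C_k}$ of representatives of the isomorphism types. I will build $(A_n, f_n)$ inductively by a bookkeeping argument. First, list all tasks. These are of two kinds: (a) embed $C_k$ into the sequence; (b) for each triple consisting of $m$, some $f\maps A \to A_m$ with $A = C_i$, and some $g\maps A \to C_j$, extend $g$ along the sequence. Since each $A_m$ is finite and there are countably many $C_i, C_j$, the set of tasks referring to $A_m$ is countable once $A_m$ is fixed. I will use a bijection $\N \times \N \to \N$ that schedules task number $t$ of stage $m$ at a stage $n \ge m$, with $n$ strictly increasing in $t$, so that every task is handled at a stage after it appears.

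At a stage handling task (a), I use JEP to obtain $A_{n+1}$ together with embeddings of $A_n$ and of $C_k$. At a stage handling task (b), I amalgamate $f^n_m \cmp f\maps A \to A_n$ with $g\maps A \to C_j$ via AP. This produces $A_{n+1}$, the bonding map $f_n$, and $h\maps C_j \to A_{n+1}$ with $h \cmp g = f^{n+1}_m \cmp f$. The resulting sequence is cofinal because of the tasks of type (a). It has the extension property for arbitrary $A, B$ because we may replace them by isomorphic representatives $C_i$, $C_j$ and transport $f$ and $g$ accordingly.

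The main obstacle is the countability of the bookkeeping. The maps $A \to A_m$ need not be finitely many per pair; they are only countably many, given that distance sorts are finite and everything is finite. One must also avoid assuming HP anywhere, since the paper explicitly drops it. I will check that the argument uses only JEP, AP, and finiteness of the structures, and that isomorphic copies are handled by transporting maps.
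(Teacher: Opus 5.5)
Your proof is correct, but it takes a different route from the paper. The paper gives no argument of its own for this statement. It defers to the category-theoretic Fraïssé theory of \cite[Corollary~3.8]{Kub40} and \cite[Theorem~4.46]{BartosKubis}, and only observes that for a category of finite structures the countability hypotheses appearing there all collapse to ``countably many isomorphism types''. Those hypotheses are being essentially countable, having a countable dominating subcategory, and having a cofinal subcategory.

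You instead give the classical self-contained bookkeeping construction, and your ingredients are exactly the ones behind that reduction. In the forward direction you use two facts: a finite structure has only finitely many substructures, and isomorphisms in $\K$ are just bijective embeddings, so two objects with the same image in some $A_n$ are $\K$-isomorphic. Together these give countably many types. In the backward direction, hom-sets between finite structures are small, which makes the task list countable.

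The abstract route buys generality, since it applies to arbitrary categories and not only to categories of finite structures with all embeddings. Yours is elementary and makes it transparent that HP is never used.

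Two small corrections:
\begin{itemize}
\item Between two finite structures with finitely many sorts there are only \emph{finitely} many maps at all. So your remark that the embeddings $A \to A_m$ ``need not be finitely many per pair'' is wrong, though harmlessly. Neither finiteness of the signature nor of distance sorts plays any role.
\item You should say explicitly that $A_0$ is any object of $\K_\fin$ (available since $\K_\fin \neq \emptyset$). Likewise, say that finite task lists are padded, e.g.\ by identity bonding maps, so that every stage has a task to handle.
\end{itemize}
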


For the following theorem we sketch a proof.
A full proof in a more general setting can be found in \cite[Theorem~4.15]{BartosKubis}.
    
\begin{tw} \label{thm:Flim}
    Let $\K$ be a $\sigma$-complete category of structures, and let $U \in \sig\K_\fin$.
    The following conditions are equivalent.
    \begin{enumerate}
        \item $U$ is universal and homogeneous for $\K_\fin$.
        \item $U$ is universal and has the extension property for $\K_\fin$.
        \item $U$ is the colimit of a Fraïssé sequence in $\K_\fin$.
    \end{enumerate}
    Moreover, such object $U$ is unique, universal for $\sig\K_\fin$, and every sequence in $\K_\fin$ with colimit $U$ is Fraïssé.
\end{tw}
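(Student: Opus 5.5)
I would prove the cycle (1)$\Rightarrow$(2)$\Rightarrow$(3)$\Rightarrow$(1). Then I would derive the ``moreover'' part by a back-and-forth argument. Throughout I fix a chain $U = \bigcup_n U_n$ with $U_n \in \K_\fin$, which exists since $U \in \sig\K_\fin$. I also use the standard fact that every finite substructure of $U$ is contained in some $U_n$. This holds because the structures have finitely many sorts and each finite subset of $U$ lies in some stage of the chain.

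For (1)$\Rightarrow$(2), take $f\maps A \to U$ and $g\maps A \to B$. Universality gives some embedding $k\maps B \to U$. Then $f$ and $k \cmp g$ are two embeddings of $A$ into $U$, so homogeneity yields $\varphi \in \Aut(U)$ with $\varphi \cmp k \cmp g = f$, and $h = \varphi \cmp k$ works. For (2)$\Rightarrow$(3), I claim the chain $(U_n)$ with inclusions is Fraïssé. Cofinality: an embedding $A \to U$ has finite image, hence lands in some $U_n$. Extension property: given $f\maps A \to U_m$ and $g\maps A \to B$, apply the extension property of $U$ to the composite $A \to U_m \leq U$. The resulting $h\maps B \to U$ has finite image, which lies in some $U_n$ with $n \geq m$.

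For (3)$\Rightarrow$(1), and for uniqueness, the core is a back-and-forth lemma. Suppose $(A_n,f_n)$ is Fraïssé with colimit $U$, $(B_n,g_n)$ is any sequence in $\K_\fin$ with colimit $V$, and $V$ has the extension property. Then:
\begin{enumerate}
\item $V$ embeds into $U$;
\item if $(B_n,g_n)$ is also Fraïssé, then any isomorphism between finite substructures extends to an isomorphism $V \to U$.
\end{enumerate}
The forth step extends a partial map $B_k \to A_m$ using the extension property of the sequence $(A_n,f_n)$. The back step extends the inverse $A_m \to B_j$ using the extension property of the other sequence. The union of the resulting compatible maps is an embedding or isomorphism by the colimit property. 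Applying this with $V = U$ and starting from a given finite isomorphism gives homogeneity. Universality of $U$ for $\K_\fin$ follows from cofinality, and universality for $\sig\K_\fin$ follows from the forth half alone. Uniqueness follows because two colimits of Fraïssé sequences are isomorphic by the full back-and-forth.

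The last claim, that every sequence with colimit $U$ is Fraïssé, is obtained by rerunning (2)$\Rightarrow$(3) for an arbitrary chain with union $U$. The only delicate point is this finiteness/compactness step, namely that finite substructures lie in some stage. For non-injective bonding maps we can pass to images, since $\K$-maps are embeddings. I do not expect other obstacles.
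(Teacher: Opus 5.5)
Your proposal is correct and follows essentially the same route as the paper's sketch. The key fact is that every embedding of a finite structure into $U$ factors through some stage of the chain, which transfers universality and the extension property between $U$ and any sequence with colimit $U$; back-and-forth then gives homogeneity, uniqueness, and universality for $\sig\K_\fin$. (Two cosmetic points: bonding maps are always injective here, so you mean non-inclusion bonding maps, handled by passing to the images $f^\infty_n[A_n]$; and the hypothesis in your lemma that $V$ has the extension property is never used.)
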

\begin{proof}[Proof sketch]
    The fact that $\K$ is $\sigma$-complete and that $\K_\fin$ consists of literally finite structures implies that there is an essentially one-to-one correspondence between sequences in $\K_\fin$ and structures in $\sig\K_\fin$ – including morphisms between them.
    The key property used is that every embedding from a $\K_\fin$-object to $U$ factorizes through the sequence – while this factorization property is trivial in the classical case, it is an important assumption in abstract category-theoretic setup.
    (For details see the notion of free completion \cite[Remarks~4.56 and 4.57]{BartosKubis}.)
    
    It follows that a sequence $(U_n, f_n)$ in $\K_\fin$ has the extension property if and only if its colimit $(U, f^\infty_n)$ in $\sig\K_\fin$ has the extension property for $\K_\fin$ (and similarly for universality).
    It also follows that two sequences having the same colimit are back-and-forth equivalent, and so if one sequence is Fraïssé, the other one is as well.
    
    It is easy to see that a universal homogeneous  structure for $\K_\fin$ has the extension property.
    Homogeneity of the colimit of a Fraïssé sequence as well as uniquess of a Fraïssé sequence (and its colimit) up to isomorphism follow from a back-and-forth argument.
\end{proof}

The unique object $U$ from Theorem~\ref{thm:Flim} is called the \emph{Fraïssé limit} of $\K_\fin$ (in $\K$), and by Theorem~\ref{thm:Fclass} it exists if and only if $\K_\fin$ is a Fraïssé class.

\medskip

\subsection{Transfer principles}

We formulate two propositions that will allow us to easily transfer  amalgamation principles and preserve the Fraïssé limit by suitable functors between categories.
This captures the situation that generalized reducts (images under forgetful functors) of a homogeneous structure are homogeneous in their respective categories.

First we need to introduce the relevant properties of functors.
We say that a functor $F\colon \C\to\D$  is
\begin{itemize}
    \item  {\it wide} / {\it surjective on objects} if for every $\D$-object $X$ there is a $\C$-object $X'$ such that $F(X')=X$;

  \item  {\it essentially wide} / {\it essentially surjective on objects} if for every $\D$-object $X$ there is a $\C$-object $X'$ with $F(X') $ isomorphic to $ X$;
		
	\item {\it cofinal} if for every $\D$-object $X$ there is a $\C$-object $X'$ and a $\D$-map $f\maps X \to F(X')$.
\end{itemize}

\begin{uwgi}
We have the following implications for a functor $F$:

\smallskip
\centerline{wide $\implies$ essentially wide $\implies$ cofinal.}
\end{uwgi}

\noindent
We say that a functor $F\colon \C\to\D$  is
\begin{itemize}
\item {\it star-surjective} if for every $\C$ object $X'$ and every $\D$-map $f\colon F(X')\to Y$ there is a $\C$-object $Y'$ and a $\C$-map $f'\colon X'\to Y'$ with $F(f')=f$;

	\item {\it essentially star-surjective} if for every $\C$-object $X'$ and every $\D$-map $f\maps F(X') \to Y$ there is a $\C$-object $Y'$ and a $\C$-map $f'\maps X' \to Y'$ and an $\D$-isomorphism $g\maps Y \to F(Y')$ with $F(f') = g \circ f$;

   \item     {\it absorbing} if for every $\C$-object $X'$ and every $\D$-map $f\maps F(X') \to Y$ there is a $\C$-object $Y'$, a $\C$-map $f'\maps X' \to Y'$, and an $\D$-map $g\maps Y \to F(Y')$ with $F(f') = g \cmp f$;  
        
		\item {\it weakly absorbing} if for every $\C$-object $X'$ there is a $\D$-map $e\maps F(X') \to X$ such that for every $\D$-map $f\maps X \to Y$ there is a $\C$-object $Y'$, a $\C$-map $f'\maps X' \to Y'$, and an $\D$-map $g\maps Y \to F(Y')$ with $F(f') = g \circ f \circ e$ (wlog the map $e$ may be of the form $F(e')$ for some $\C$-map $e'\maps X' \to X''$).

\end{itemize}

\begin{uwgi}
We have the following implications for a functor $F$:

\smallskip
\centerline{star-surjective $\Rightarrow$ essentially star-surjective $\Rightarrow$ absorbing $\Rightarrow$ weakly absorbing.}
\end{uwgi}

\begin{uwgi}
    For every subcategory $\C \subs \D$ we have the corresponding inclusion functor, and the properties listed in this section simplify, e.g. a subcategory $\C \subs \D$ is absorbing if for every $\C$-object $X$, $\D$-object $Y$ and a $\D$-map $f\maps X \to Y$ there is a $\C$-object $Z$ and a $\D$-map $g\maps Y \to Z$ such that $g \cmp f$ is a $\C$-map.

    Similarly, when a sequence in $\D$ is viewed as a functor $\N \to \D$, the cofinality and absorption reduces to Definition~\ref{def:Fraisse}.
\end{uwgi}

Note that definitions of AP, CAP, and WAP make sense for arbitrary categories, and we use them in the following general proposition.

\begin{prop} \label{thm:fin_transfer}
    Let $\C$ and $\D$ be categories and $F\maps \C \to \D$ a functor.
    \begin{enumerate}
        \item If $\C$ has AP and $F$ is essentially wide and absorbing, then $\D$ has AP.
        \item If $\C$ has CAP and $F$ is cofinal and absorbing, then $\D$ has CAP.
        \item If $\C$ has WAP and $F$ is cofinal and weakly absorbing, then $\D$ has WAP.
    \end{enumerate}
    In particular, if $F$ is wide and star-surjective, all the amalgamation properties are transferred forward.
\end{prop}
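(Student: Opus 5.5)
The plan is to prove each item by a direct diagram chase. We transport a $\D$-situation into $\C$ using the given properties of $F$, amalgamate there, and push the result back with $F$ and the absorbing maps. In every item the key device is the same. Given a $\D$-object $A$, we use essential wideness (resp.\ cofinality) to replace $A$ by an object of the form $F(A')$. Then every $\D$-map out of $F(A')$ is lifted, up to postcomposition, to a $\C$-map via absorption.

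For (1), let $\alpha_1\maps A \to B$ and $\alpha_2\maps A \to C$ be $\D$-maps. By essential wideness pick $A' \in \C$ and a $\D$-isomorphism $u\maps F(A') \to A$. Apply absorption to $\alpha_1 \cmp u\maps F(A') \to B$. This yields a $\C$-map $\alpha_1'\maps A' \to B'$ and a $\D$-map $g_1\maps B \to F(B')$ with $F(\alpha_1') = g_1 \cmp \alpha_1 \cmp u$. In the same way we obtain $\alpha_2'\maps A' \to C'$ and $g_2\maps C \to F(C')$. Amalgamation in $\C$ gives $\beta_1'\maps B' \to D'$ and $\beta_2'\maps C' \to D'$ with $\beta_1' \cmp \alpha_1' = \beta_2' \cmp \alpha_2'$. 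Put $\beta_i = F(\beta_i') \cmp g_i$. Then $\beta_1 \cmp \alpha_1 \cmp u = \beta_2 \cmp \alpha_2 \cmp u$, and since $u$ is an isomorphism it can be cancelled, giving $\beta_1 \cmp \alpha_1 = \beta_2 \cmp \alpha_2$.

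For (2), given $A_0 \in \D$, cofinality gives $X' \in \C$ and a map $A_0 \to F(X')$. CAP in $\C$ gives an amalgamation base $A'$ with a map $X' \to A'$. We then claim that $F(A')$ is an amalgamation base in $\D$, and that $A_0 \to F(X') \to F(A')$ is the required embedding. The proof of the claim is the argument of (1) with $u = \id{F(A')}$, so no essential wideness is needed. The potential subtlety is only that CAP asks for an amalgamation base above $A_0$, and it is provided by $F(A')$.

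For (3), which I expect to be the fiddliest step, we must combine the weak absorption map with the amalgamable map. Given $A_0$, cofinality gives $A_0 \to F(X')$. WAP in $\C$ gives an amalgamable $i'\maps X' \to A'$. Weak absorption applied to $A'$ gives $e\maps F(A') \to A$. The candidate amalgamable map in $\D$ is
\[
    i = e \cmp F(i') \cmp (A_0 \to F(X')).
\]
Now let $\alpha_1\maps A \to B$ and $\alpha_2\maps A \to C$ be $\D$-maps. Weak absorption yields $\C$-maps $\alpha_j'\maps A' \to B_j'$ with $F(\alpha_j') = g_j \cmp \alpha_j \cmp e$. Amalgamability of $i'$ yields $\beta_j'$ with $\beta_1' \cmp \alpha_1' \cmp i' = \beta_2' \cmp \alpha_2' \cmp i'$. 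Setting $\beta_j = F(\beta_j') \cmp g_j$ and applying $F$ gives $\beta_1 \cmp \alpha_1 \cmp i = \beta_2 \cmp \alpha_2 \cmp i$. The final remark follows from the chains of implications in the preceding remarks: wide implies essentially wide and cofinal, and star-surjective implies absorbing and weakly absorbing.
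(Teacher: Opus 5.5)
Your proof is correct and is the same diagram chase the paper has in mind: replace the $\D$-object by some $F(A')$ via essential wideness or cofinality, lift the two outgoing maps to $\C$ by (weak) absorption, amalgamate in $\C$, and push the amalgam forward with $F$. In (3) you take $e \circ F(i') \circ c$, with $c$ the cofinality map, as the amalgamable map rather than $F(i')$ alone; this is the right choice, and it spells out a detail the paper's one-line sketch leaves implicit.
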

\begin{proof}
    The proof is a straightforward diagram chasing.
    If $A'$ is an amalgamation base in $\C$ and $F$ is absorbing, then $A = F(A')$ is an amalgamation base in $\D$.
    Similarly, if $e'\maps A'_0 \to A'_1$ is an amalgamable map, then $e = F(e')$ is an amalgamable map.
\end{proof}

A functor $F\maps \C \to \D$ is {\it $\sigma$-continuous} if  for every sequence $(X_n, f_n)$ in $\C$ and its colimit $(X_\infty, f^\infty_n)$ we have that $(F(X_\infty), F(f^\infty_n))$ is a colimit of $(F(X_n), F(f_n))$ in $\D$.
For categories of structures, where objects are (many-sorted) structures and morphisms are all embeddings, the colimit of a sequence is essentially the union of the structures, and so to check $\sig$-continuity, it is enough to show that for every point $x \in F(X_\infty)$ there is $n \in \omega$ and $x' \in F(X_n)$ such that $f_n^\infty(x') = x$.

\begin{prop} \label{thm:Flim_transfer}
    Let $\C$ and $\D$ be $\sigma$-complete categories of structures, and let $F\maps \C \to \D$ be a $\sigma$-continuous functor that restricts to $F_\fin\maps \C_\fin \to \D_\fin$.
    If $\C_\fin$ is Fraïssé with limit $U$ and
    \begin{enumerate}
        \item $F_\fin$ is essentially wide and absorbing, or
        \item $F_\fin$ is cofinal and absorbing, and $\D_\fin$ has AP,
    \end{enumerate}
    then $\D_\fin$ is Fraïssé with limit $F(U)$.
\end{prop}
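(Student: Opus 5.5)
Fix a Fraïssé sequence $(U_n, u_n)$ in $\C_\fin$ with colimit $(U, u^\infty_n)$, which exists by Theorem~\ref{thm:Fclass}. Its image $(F(U_n), F(u_n))$ is a sequence in $\D_\fin$, and by $\sigma$-continuity its colimit is $(F(U), F(u^\infty_n))$. In particular $F(U) \in \sig\D_\fin$. By Theorem~\ref{thm:Flim} (the implication (3)$\Rightarrow$(1), together with the uniqueness clause), it suffices to show two things: the image sequence is Fraïssé in $\D_\fin$, i.e.\ it is cofinal and has the extension property, and $\D_\fin$ is a Fraïssé class. For the latter we can also appeal to Theorem~\ref{thm:Fclass} once a Fraïssé sequence is exhibited.

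\textbf{Cofinality.} Let $X \in \D_\fin$. Since $F_\fin$ is cofinal (essential wideness implies this), there is $X' \in \C_\fin$ and a $\D$-map $X \to F(X')$. The original sequence is cofinal, so there is $X' \to U_n$ for some $n$. Applying $F$ and composing gives $X \to F(U_n)$.

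\textbf{Absorption.} Let $g\maps F(U_m) \to Y$ in $\D_\fin$. Since $F_\fin$ is absorbing, there are $Y' \in \C_\fin$, a $\C$-map $g'\maps U_m \to Y'$ and a $\D$-map $h\maps Y \to F(Y')$ with $F(g') = h \cmp g$. Because $(U_n)$ is absorbing in $\C_\fin$, there is $k\maps Y' \to U_n$ with $k \cmp g' = u^n_m$. Then $F(k) \cmp h \cmp g = F(k \cmp g') = F(u^n_m)$, so the image sequence is absorbing.

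\textbf{Extension property and AP.} Absorption upgrades to the extension property once $\D_\fin$ has AP, as remarked in Definition~\ref{def:Fraisse}. In case (2) AP is assumed. In case (1) it follows from Proposition~\ref{thm:fin_transfer}(1), since $\C_\fin$ has AP. JEP follows from cofinality of the sequence: any two objects embed into some $F(U_n)$, taking the later index. The number of isomorphism types is countable because every object embeds into some $F(U_n)$, the structures are finite and the signature is fixed; alternatively this follows directly from the existence of a Fraïssé sequence via Theorem~\ref{thm:Fclass}. Hence $\D_\fin$ is Fraïssé and $F(U)$ is its limit.

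\textbf{Main obstacle.} The delicate point is deriving the extension property from absorption under AP. Given $f\maps A \to F(U_m)$ and $g\maps A \to B$, amalgamate $f$ and $g$ to get $F(U_m) \to C$ and $B \to C$. Absorb the map $F(U_m) \to C$ into some $F(U_n)$ to obtain $h\maps C \to F(U_n)$. The composite $B \to C \to F(U_n)$ then works, since both routes from $A$ agree after amalgamation and the absorbing identity turns $h$ composed with $F(U_m) \to C$ into $F(u^n_m)$. One must also check that the notions coincide with those in Definition~\ref{def:Fraisse} when $\D_\fin$ is not hereditary; this requires no extra work.
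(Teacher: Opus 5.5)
Your proof is correct and follows essentially the same route as the paper. You reduce case (1) to case (2) via Proposition~\ref{thm:fin_transfer}(1), push a Fraïssé sequence of $\C_\fin$ through $F$, check cofinality and absorption by diagram chasing, and identify $F(U)$ as the colimit by $\sigma$-continuity; you also spell out the upgrade from absorption to the extension property under AP, together with JEP and countability, which the paper leaves implicit.
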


\begin{proof}
Since $U \in \sig\C_\fin$, it is a $\C$-colimit of a $\C_\fin$-sequence $(U_n, f_n)$, which is necessarily Fraïssé by Theorem~\ref{thm:Flim}.
By Proposition~\ref{thm:fin_transfer}(1) applied to $F_\fin$, the case (1) reduces to the case (2), which we now show.

By straightforward diagram chasing, $(F(U_n), F(f_n))$ is cofinal in $\D_\fin$ as $(U_n, f_n)$ is cofinal in $\C_\fin$ and $F_\fin$ is cofinal.
Similarly, $(F(U_n), F(f_n))$ is absorbing in $\D_\fin$ as $(U_n, f_n)$ is absorbing in $\C_\fin$ and $F_\fin$ is absorbing.
Altogether, $(F(U_n), F(f_n))$ is a Fraïssé sequence in $\D_\fin$.

Since $F$ is $\sig$-continuous, $F(U)$ is the $\D$-colimit of $(F(U_n), F(f_n))$, and so it is the Fraïssé limit of $\D_\fin$.
\end{proof}

\subsection{The class of all finite ultrametric spaces}

We will show that $\Ult_\fin$ is a Fraïssé class and describe its Fraïssé limit $\U$.
To show the amalgamation property we will first assume that we work with a fixed ambient distance set $E \in \Lin^0$ and with isometric embeddings, i.e. in $\UltIso^E_\fin$.
Then we will use a transfer principle to generalize to dc-embeddings.

The following are straightforward generalizations of amalgamation of one-point extensions of classical ultrametric spaces.

\begin{lm} \label{thm:one_point_amalgamation}
    Let $E \in \Lin^0$ and let $X \in \UltIso^E_\fin$.
   Any two one-point extensions of $X$ are strongly amalgamable, i.e. for all one-point extensions $X \cup \set{a}, X \cup \set{b} \in \UltIso^E_\fin$ with $a \neq b$ there is $d(a, b) \in E \setminus \set{0}$ such that $X \cup \set{a, b} \in \UltIso^E_\fin$.
    
    Moreover, if the extensions are non-isomorphic, i.e. if $d(a, x_0) \neq d(b, x_0)$ for some $x_0 \in X$, then such $d(a, b)$ is unique.
\end{lm}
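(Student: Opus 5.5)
I will produce an explicit value for $d(a,b)$. The only constraints are the ultrametric triangle inequalities for triangles containing both $a$ and $b$, namely $\set{a,b,x}$ for $x \in X$. Every other triangle already lies in $X\cup\set{a}$ or in $X\cup\set{b}$.

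Write $\alpha(x) = d(a,x)$ and $\beta(x) = d(b,x)$ for $x \in X$. For a triangle $\set{a,b,x}$ the ultrametric condition says the two largest of the three sides coincide. Equivalently, with $r = d(a,b)$:
\begin{itemize}
\item if $\alpha(x) \neq \beta(x)$, then $r = \max\set{\alpha(x),\beta(x)}$;
\item if $\alpha(x) = \beta(x)$, then $r \leq \alpha(x)$.
\end{itemize}

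\textbf{Case 1: $\alpha = \beta$ on $X$ (isomorphic extensions).} Only the upper bounds $r \leq \alpha(x)$ arise, and $\alpha(x) > 0$ since $a \notin X$. If $X \neq \emptyset$, take $r = \min_{x\in X}\alpha(x)$, which lies in $E\setminus\set{0}$. If $X = \emptyset$, then $E$ must contain a positive element (otherwise I would note this degenerate case separately), and any positive element works.

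\textbf{Case 2: $\alpha(x_0) \neq \beta(x_0)$ for some $x_0 \in X$.} Uniqueness is immediate: the first constraint forces $r = m := \max\set{\alpha(x_0),\beta(x_0)}$, and $m > 0$ as it is the larger of two distinct elements of $E$ with least element $0$. For existence I must check that this $m$ satisfies every constraint.

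\emph{Points $x$ with $\alpha(x) \neq \beta(x)$.} I need $\max\set{\alpha(x),\beta(x)} = m$. Suppose, by symmetry, that $\alpha(x_0) = m > \beta(x_0)$. Triangle $\set{b,x_0,x}$ gives $d(x_0,x) \geq \beta(x_0)$ unless $\beta(x) = \beta(x_0)$; a cleaner route is the following. By the triangle $\set{a,x_0,x}$ and the triangle $\set{b,x_0,x}$, compare each of $\alpha(x)$, $\beta(x)$ against $s = d(x_0,x)$:
\begin{itemize}
\item if $s < \beta(x_0)$, then $\alpha(x) = m$ and $\beta(x) = \beta(x_0)$, so the max is $m$;
\item if $\beta(x_0) < s < m$, then $\alpha(x) = m$ and $\beta(x) = s$, so the max is $m$;
\item if $s = m$, then $\beta(x) = m$ and $\alpha(x) \leq m$, so the max is $m$;
\item if $s > m$, then $\alpha(x) = \beta(x) = s$, which contradicts the assumption of this subcase, so it does not occur;
\item if $s = \beta(x_0)$, then $\alpha(x) = m$ and $\beta(x) \leq s < m$, so the max is $m$.
\end{itemize}

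\emph{Points $x$ with $\alpha(x) = \beta(x)$.} I need $m \leq \alpha(x)$. The same case split on $s$ shows that equality $\alpha(x) = \beta(x)$ occurs only when $s \geq m$, and then $\alpha(x) = \beta(x) \geq m$.

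This case analysis is the main step. It amounts to the standard observation that $\set{a,b,x_0}$ is a valid triangle and that $x$ sees $a$ and $b$ consistently through $x_0$. Nothing beyond the ultrametric inequality in the two given extensions is needed, and strongness holds because $a \neq b$ with $d(a,b) > 0$.
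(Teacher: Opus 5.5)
Your proof is correct and is essentially the paper's argument. In the non-isomorphic case, the triangle $a,b,x_0$ forces $d(a,b)=\max\set{d(a,x_0),d(b,x_0)}$, and a case analysis in the tetrahedron $a,b,x_0,x$ confirms this value; you split on $d(x_0,x)$ and actually carry out the check that the paper leaves to the reader. In the isomorphic case you take a minimal positive distance, $\min_{x\in X} d(a,x)$, where the paper uses $\min(D_Y\setminus\set{0})$; both choices work. Your separate remark on $X=\emptyset$ is slightly more careful than the paper: there the paper's $\min(D_Y\setminus\set{0})$ can be undefined, and if $E=\set{0}$ the lemma genuinely fails for $X=\emptyset$.
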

\begin{proof}
    We put $Y = X \cup \set{a, b}$ and $D_Y = D_{X \cup \set{a}} \cup D_{X \cup \set{b}}$, which is finite.
    Since $d$ is already specified on $X \cup \set{a}$ and $X \cup \set{b}$, we just need to specify a suitable $d(a, b)$.

    Suppose without loss of generality $s=d(a,x_0) < d(b,x_0)=r$ for some $x_0 \in X$. We are forced to define $d(a,b) = r$ in order to $a, b, x_0$ become an ultrametric triangle.
    It still needs to be checked that all triangles $a, b, x$ with $x \in X$ are ultrametric.
    This involves a case analysis in the tetrahedron $a, b, x_0, x$ depending on how $d(b, x)$ compares to $r$, which is left to the reader.

    Now suppose that $d(a, x) = d(b, x)$ for every $x \in X$.
    We put $d(a, b) = \min(D_Y \setminus \set{0})$.
    Then for every $x \in X$ we have $d(a, b) \leq d(a, x) = d(b, x)$, so $a, b, x$ is an ultrametric triangle.
\end{proof}

Note the category of distance sets $\Lin^0$ is equivalent to the category of linear orders $\Lin$, and so $\Lin^0_\fin$ is Fraïssé and its limit is $\Qpos$.
The functor $D_\fin\maps \Ult_\fin \to \Lin^0_\fin$ is clearly wide and absorbing (since distances do not have to be attained), and $D\maps \Ult \to \Lin^0$ is $\sig$-continuous.
Hence, by the transfer principle Proposition~\ref{thm:Flim_transfer}(1), if $\Ult_\fin$ is Fraïssé with limit $\U$, then $D_\U$ is isomorphic to $\Qpos$.
Therefore we focus on $\UltIso^E$ with $E = \Qpos$.

In order to use the transfer principles to extend amalgamation to dc-embeddings, we need to prove the following.
\begin{lm}\label{lm:inclusion}
    The inclusion functor $\UltIso^{\Qpos}_\fin \subs \Ult_\fin$ is essentially wide and essentially star-surjective.
\end{lm}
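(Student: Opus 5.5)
The plan is to prove the two properties separately. Both rest on the fact that $\Qpos$ is the Fraïssé limit of $\Lin^0_\fin$, so it has the extension property for finite linear orders with least element.

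\emph{Essential wideness.} Let $X \in \Ult_\fin$. Since $D_X$ is a finite linear order with least element, there is an $\Lin^0$-embedding $e\maps D_X \to \Qpos$; we may even take it explicitly, sending the elements of $D_X$ in increasing order to $0,1,\dots,m$. Define $X'$ to have the same point set, distance set $D_{X'} = e\im{D_X} \subs \Qpos$, and metric $d_{X'} = e \cmp d_X$. Because $e$ is strictly increasing and preserves $0$, $d_{X'}$ is still symmetric, vanishes exactly on the diagonal, and satisfies the ultrametric inequality, since $e$ commutes with $\max$. The identity on points together with $e$ is then a dc-isomorphism $X \to X'$, and $X' \in \UltIso^{\Qpos}_\fin$.

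\emph{Essential star-surjectivity.} Let $X' \in \UltIso^{\Qpos}_\fin$, so $D_{X'} \subs \Qpos$ is finite, and let $f\maps X' \to Y$ be an $\Ult_\fin$-map. We need $Y' \in \UltIso^{\Qpos}_\fin$, an isometric embedding $f'\maps X' \to Y'$, and a dc-isomorphism $g\maps Y \to Y'$ with $g \cmp f = f'$ (this is what "$F(f') = g \cmp f$" means for the inclusion functor).
\begin{enumerate}
    \item Consider $D_f\maps D_{X'} \to D_Y$ and the inclusion $D_{X'} \subs \Qpos$. By the extension property of $\Qpos$, applied to the finite orders $D_{X'} \leq D_Y$ (via $D_f$), there is an $\Lin^0$-embedding $h\maps D_Y \to \Qpos$ with $h \cmp D_f$ equal to the inclusion $D_{X'} \subs \Qpos$.
    \item Transport $Y$ along $h$ exactly as in the wideness step: $Y'$ has the points of $Y$, distance set $h\im{D_Y}$, and metric $h \cmp d_Y$. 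Let $g\maps Y \to Y'$ be the identity on points with $D_g = h$, which is a dc-isomorphism.
    \item Put $f' = g \cmp f$, whose distance part is $D_{f'} = h \cmp D_f$, i.e. the inclusion $D_{X'} \subs D_{Y'}$. So $f'$ is an isometric embedding, hence a morphism of $\UltIso^{\Qpos}_\fin$.
\end{enumerate}

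There is no real obstacle here. The only substantive step is the extension of the order embedding in step 1. That is just the extension property of the Fraïssé limit $\Qpos$ of $\Lin^0_\fin$, or can be done by hand by placing each new element of $D_Y$ in the appropriate gap of $\Qpos$ relative to the images of $D_{X'}$, using density and unboundedness of $\Qpos$. The other point requiring care is the routine check that composing an ultrametric with a strictly increasing map fixing $0$ gives an ultrametric.
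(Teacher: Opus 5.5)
Your proof is correct and matches the paper's argument. For star-surjectivity you do what the paper does: use the extension property of $\Qpos$ to send $D_Y$ by an $\Lin^0$-isomorphism onto a finite subset of $\Qpos$ compatible with $D_f$, then transport the metric along it. The only difference is that you prove essential wideness directly with an explicit embedding of $D_X$ onto $\set{0,1,\dots,m}$, whereas the paper gets it as the special case $X = \seq{\emptyset, \set{0}}$ of the same star-surjectivity argument.
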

\begin{proof}
    Essentially we just observe that abstract distances can be identified with rational ones.
    Specifically, let $X \in \UltIso^{\Qpos}_\fin$, $Y \in \Ult_\fin$ and let $f\maps X \to Y$ be a dc-embedding.
    By the extension property of $\Qpos$ over $\Lin^0_\fin$ there is an $\Lin^0$-isomorphism $\phi\maps D_Y \to E \subs \Qpos$ such that $\phi \cmp D_f$ is the inclusion $D_X \subs E$.
    We put $Y' = \seq{Y, d'_Y, E}$ where $d'_Y(y, y') = \phi(d_Y(y, y'))$.
    Then $Y' \in \UltIso^{\Qpos}_\fin$, 
    and $g = \seq{\id{Y}, \phi}\maps Y \to Y'$ is a dc-isomorphism such that $g \cmp f$ is an isometric embedding.
    This shows that the inclusion $\UltIso^{\Qpos}_\fin \subs \Ult_\fin$ is essentially star-surjective. 
    
    We obtain essential surjectivity on objects as well since when we start with $Y \in \Ult_\fin$ and $X = \emptyset$ with $D_X = \set{0}$, by the proof above, we obtain $Y' \in \UltIso^{\Qpos}_\fin$ with $Y' \cong Y$.
\end{proof}

Since every two-sorted ultrametric space $X$ can be viewed both as a member of $\Ult$ and $\UltIso^E$ (for any $E \in \Lin^0$ with $D_X \leq E$), we say that $X$ is \emph{dc-homogeneous} if it is homogeneous for $\Ult_\fin$, and that $X$ is \emph{iso-homogeneous} if it is homogeneous for $\UltIso^E_\fin$.

Recall that the class of classical finite rational ultrametric spaces with isometric embeddings, which in our setup is isomorphic to the category $(\UltIso^{\Qpos}_{\rm prec})_\fin$ (see Section \ref{prel:2sort}), is Fraïssé and that its Fraïssé limit is the \emph{countable rational Urysohn ultrametric space} $\U_\Q$.

\begin{tw} \label{thm:U_Fraisse}
    $\Ult_\fin$ is a Fraïssé class, and its Fraïssé limit $\U$ is dc-isomorphic to $\U_\Q$.
    In particular, $\U$ is both dc-homogeneous and iso-homogeneous.
\end{tw}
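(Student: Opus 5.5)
The plan is to establish that $\UltIso^{\Qpos}_\fin$ is a Fraïssé class and then push this forward to $\Ult_\fin$ with the transfer principles.

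\emph{Step 1: the isometric class.} Countably many isomorphism types in $\UltIso^{\Qpos}_\fin$ is clear, since points and distances are finite and distances are rational. For AP (indeed SAP) in $\UltIso^{\Qpos}_\fin$, take $X \leq Y$ and $X \leq Z$, and write $Z = X \cup \set{c_1, \dots, c_k}$. Adjoin the $c_i$ to $Y$ one at a time, amalgamating the current space with one-point extensions via Lemma~\ref{thm:one_point_amalgamation}. The new distances $d(y, c_i)$ are chosen inductively from $E = \Qpos$, and the distance sort of the amalgam is the finite union of the relevant distances. Since the empty space (with distance set $\set{0}$) embeds isometrically into everything, JEP follows from AP.

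Concretely, amalgamating $Y \cup \set{c_1,\dots,c_{i-1}}$ with $X \cup \set{c_1,\dots,c_i}$ over $X\cup\set{c_1,\dots,c_{i-1}}$ reduces to amalgamating the one-point extensions point by point over a growing base. So I would phrase this as a double induction: first adding the points of $Y\setminus X$ one at a time to $X\cup\set{c}$, and then iterating over $c$.

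\emph{Step 2: transfer.} By Lemma~\ref{lm:inclusion}, the inclusion $\UltIso^{\Qpos}_\fin \subs \Ult_\fin$ is essentially wide and essentially star-surjective, hence absorbing. Proposition~\ref{thm:fin_transfer}(1) then gives AP for $\Ult_\fin$. JEP also transfers, again via the empty space. Since a finite structure over finitely many finite linear orders has countably many types, $\Ult_\fin$ is Fraïssé.

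To identify the limit, apply Proposition~\ref{thm:Flim_transfer}(1) to the inclusion $\UltIso^{\Qpos} \subs \Ult$. This functor is $\sigma$-continuous, since unions are preserved. It follows that the Fraïssé limit $V$ of $\UltIso^{\Qpos}_\fin$ is also the Fraïssé limit $\U$ of $\Ult_\fin$. Hence $\U$ is dc-homogeneous, and as an $\UltIso^{\Qpos}$-object it is iso-homogeneous.

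\emph{Step 3: comparing with $\U_\Q$.} The remaining task is to show that $V$ is isometric to $\U_\Q$, the limit of the precise class $(\UltIso^{\Qpos}_{\rm prec})_\fin$. I would apply Proposition~\ref{thm:Flim_transfer}(2) in the reverse direction: the full inclusion of precise spaces into $\UltIso^{\Qpos}_\fin$. This inclusion is cofinal by Lemma~\ref{precise}. It is absorbing because any extension of a precise space can be made precise with the same distance set, again by Lemma~\ref{precise}, and the composite remains an isometric embedding. The target class has AP by Step 1, and the inclusion is $\sigma$-continuous. Transferring the known Fraïssé property of the precise class therefore shows that its limit $\U_\Q$, viewed with distance sort $\Qpos$, is the limit of $\UltIso^{\Qpos}_\fin$.

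One subtlety needs care here. $\U_\Q$ as a classical space attains every positive rational, so $D_{\U_\Q} = \Qpos$, and the two-sorted structures agree. Both sequences then have colimits with distance sort $\Qpos$, so by uniqueness $V \cong \U_\Q$.

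I expect the main obstacle to be the omitted case analysis in Lemma~\ref{thm:one_point_amalgamation}, namely checking that every triangle $a, b, x$ is ultrametric. If I had to verify it, I would split on whether $d(b, x) < r$, $= r$, or $> r$. In each case I would use the ultrametric triangles $a x_0 x$ and $b x_0 x$ to show that $d(a, x) = d(b, x) \geq r$ or that exactly one of them is below $r$.
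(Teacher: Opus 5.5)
Your proposal is correct and follows the paper's proof: amalgamation in $\UltIso^{\Qpos}_\fin$ via Lemma~\ref{thm:one_point_amalgamation} with JEP from the empty space, then Proposition~\ref{thm:Flim_transfer}(2) for the full cofinal subcategory of precise spaces to identify the limit with $\U_\Q$, then Proposition~\ref{thm:Flim_transfer}(1) for $\UltIso^{\Qpos} \subs \Ult$ using Lemma~\ref{lm:inclusion}. The differences are cosmetic: you do the last two transfers in the opposite order, and you spell out the reduction to one-point extensions and the triangle case check, which the paper leaves implicit.
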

\begin{proof}
    The class $\UltIso^{\Qpos}_\fin$ has the amalgamation property by Lemma~\ref{thm:one_point_amalgamation} since it is enough to check it for one-point extensions.
    It follows that $\UltIso^{\Qpos}_\fin$ is a Fraïssé class.  Indeed, clearly $\UltIso^{\Qpos}_\fin$ has countably many isomorphism types.
    Moreover, the empty space $\seq{\emptyset, \set{0}}$ uniquely embeds into every ultrametric space, and so JEP follows from AP.

    We apply Proposition~\ref{thm:Flim_transfer}(2) to the inclusion functor of the full subcategory of precise spaces $(\UltIso^{\Qpos}_{\rm prec}) \subs \UltIso^{\Qpos}$ to show that the Fraïssé limit of $\UltIso^{\Qpos}_\fin$ is $\U_\Q$.
    By Lemma~\ref{precise}, the subcategory $\monster \subs \UltIso^{\Qpos}_\fin$ is cofinal, and since it is also full, it is absorbing.
    Clearly, the inclusion functor is $\sig$-continuous.
    
    Finally we apply Proposition~\ref{thm:Flim_transfer}(1) to the inclusion functor $\UltIso^{\Qpos} \subs \Ult$, which is also clearly $\sig$-continuous and whose restriction $\UltIso^{\Qpos}_\fin \subs \Ult_\fin$ is essentialy wide and absorbing by Lemma~\ref{lm:inclusion}.
    Hence, $\Ult_\fin$ is Fraïssé, and the Fraïssé limit is represented by $\U_\Q$.
\end{proof}

\begin{uwgi} \label{rmk:echeloned}
The alternative description of echeloned spaces~\cite[Section~6]{GPPPS_Echeloned} exactly corresponds to precise two-sorted ultrametric spaces with the same notion of embeddings, but with the crucial difference that no form of triangle inequality is assumed.
Consequentially, the Fraïssé limits are different: as we showed the generic ultrametric space $\U$ is represented by the countable rational Urysohn ultrametric space, while the generic echeloned space is represented by the countable $S$-valued Urysohn \emph{metric}
space for $S = \set{0} \cup ((1, 2) \cap \Q)$.
Note that this is a \emph{dull} metric space~\cite[Definition~1.9]{GPPPS_Echeloned} in the sense that the distance set itself assures that the triangle inequality is trivially true.
\end{uwgi}

\subsection{Convexly ordered ultrametric spaces}

A \emph{convex order} on an ultrametric space $X$ is a liner order $\preceq$ on $X$ such that all balls are convex, i.e. for every ball $B \subseteq X$ and $x, y \in B$ the interval $[x, y]_{\preceq}$ is contained in $B$.
It follows that for disjoint balls $B$ and $B'$ the truth of $x \prec x'$ does not depend on the choice of $x \in B$ and $x' \in B'$.
In particular, we obtain a linear order of every adjacency equivalence class.
On the other hand, specifying a linear order $\preceq$ of every adjacency equivalence class gives a convex liner order of $X$ by putting $x \prec x'$ if $B(x,x')\prec B(x',x)$ for $x \neq x'$, i.e. the order is decided by the order of the adjacent envelope.
This can be also observed from the tree representation (Section~\ref{sec:treerep}), i.e. by linearly ordering the set of all branching classes at every node.
Let $\UltConv$ denote the category of convexly ordered ultrametric spaces and dc-embeddings preserving the convex order.

We consider the functor $F^\prec\maps \UltConv \to \Ult$ forgetting the convex order, and its composition $D^\prec\maps \UltConv \to \Lin^0$ with the functor $D\maps \Ult\to\Lin^0$.
Let $\IsoConv = \UltIso^{\Qpos\mkern-4mu, \prec} \subs \Ult^{\prec}$ denote the subcategory of all convexly ordered ultrametric spaces~$X$ with $D_X \leq \Qpos$ and all isometric embeddings, preserving the convex order.
We summarize the situation in the following diagram.

\vspace{-2ex}
\begin{center}
\begin{tikzpicture}[
    x = {(6em, 0em)},
    y = {(0em, -4em)},
    label/.style = {
        edge label={#1},
        font=\footnotesize,
    },
]
    \node (I) at (0, 0) {$\UltIso^{\Qpos}$};
    \node (U) at (1, 0) {$\Ult$};
    \node (CI) at (0, 1) {$\IsoConv$};
    \node (CU) at (1, 1) {$\UltConv$};
    \node (L) at (2, 0) {$\Lin^0$};

    \graph{
        (CU) ->[label=$F^\prec$, swap] (U),
        (CI) ->[label=$F^\prec$, swap] (I),
        (I) ->[label=$\subs$] (U) ->[label=$D$] (L),
        (CI) ->[label=$\subs$] (CU) ->[label=$D^\prec$, swap, inner sep=1pt] (L),
    };
\end{tikzpicture}
\end{center}

The following results show that the properties of $\UltIso^{\Qpos} \subs \Ult$ are mirrored in $\IsoConv \subs \UltConv$.

\begin{lm} \label{thm:C_one_point_amalgamation}
    Let $X \in \IsoConv_\fin$.
    Every two one-point extensions of $X$ are strongly amalgamable, i.e. for all one-point extensions $X \cup \set{a}, X \cup \set{b} \in \IsoConv_\fin$ with $a \neq b$ there is $d(a, b) \in \Q^{>0}$ and a choice of $a \prec b$ or $a \succ b$ such that $X \cup \set{a, b} \in \IsoConv_\fin$.
\end{lm}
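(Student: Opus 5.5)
First choose the distance $d(a,b)$ exactly as in Lemma~\ref{thm:one_point_amalgamation}, applied to the underlying spaces $F^\prec(X\cup\set{a})$ and $F^\prec(X\cup\set{b})$ in $\UltIso^{\Qpos}_\fin$. Then decide the order between $a$ and $b$ so that the resulting linear order on $Y = X\cup\set{a,b}$ is convex. Since $D_X\subs\Qpos$ is finite, there is room below every positive distance. So in the non-isomorphic case we take the forced value, and in the case $d(a,x)=d(b,x)$ for all $x$ we may take any $d(a,b)\in\Q^{>0}$ with $d(a,b)\le\min\set{d(a,x):x\in X}$ (e.g. the minimum of positive distances occurring, or smaller). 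The ultrametric part is then given by Lemma~\ref{thm:one_point_amalgamation}.

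For the order, we use the description of convex orders via adjacent envelopes. A linear order on $Y$ is convex if and only if, for $y\ne y'$, whether $y\prec y'$ depends only on the pair of adjacent balls $\seq{B(y,y'),B(y',y)}$, together with transitivity. Equivalently, we use the characterization: for every ball $B$ and every $z\notin B$, either $z\prec B$ or $z\succ B$. The orders on $X\cup\set{a}$ and $X\cup\set{b}$ are prescribed, so only the comparison of $a$ with $b$ must be chosen.

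\textbf{Case 1: some $x_0\in X$ separates $a$ from $b$ in the ball structure.} Say $d(a,x_0)<d(b,x_0)=r=d(a,b)$. Then $a$ and $x_0$ lie in the same ball $B_r(a)$, and $b\notin B_r(a)$. The position of $b$ relative to $x_0$ is given in $X\cup\set{b}$, so we set $a\prec b$ if and only if $x_0\prec b$. This choice does not depend on $x_0$: any two such witnesses lie in $B_r(a)\cap X$, which is a ball of $X\cup\set{b}$ not containing $b$, and that ball is convex there. Symmetric reasoning applies if $b$ is the closer point.

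\textbf{Case 2: $a$ and $b$ are indistinguishable from $X$.} Here $d(a,b)$ is minimal, so $\set{a,b}$ is the smallest ball containing either point. Both $a$ and $b$ occupy the same cut of $(X,\preceq)$: for every $x$, the ball $B(a,x)$ of $X\cup\set{a}$ restricted to $X$ coincides with the corresponding ball for $b$. We have to be careful, though, because the cut of $a$ in $X$ is determined by convexity only up to the data of $X\cup\set{a}$, which is given separately for $b$. If the cuts agree, either choice of $a\prec b$ or $b\prec a$ works, with $a,b$ placed adjacently in that cut. If the cuts differ, there is some $x\in X$ lying strictly between $a$ and $b$. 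Then the pair $\set{a,b}$ is not convex, so we must instead enlarge $d(a,b)$. This is the main obstacle: in this subcase the choice of $d(a,b)$ from Lemma~\ref{thm:one_point_amalgamation} must be modified. We would take $d(a,b)$ to be the radius at which the balls around $a$ and $b$ still avoid the separating elements, i.e. the least $r\in D_X$ such that the closed ball $\bar B_r(a)\cap X$ contains all $x$ lying between the two cuts. We then check that the ultrametric conditions still hold, using $d(a,x)=d(b,x)$ for all $x$, which gives $d(a,b)\le d(a,x)$ whenever the cuts force it.

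Finally, we verify convexity of $Y$ by checking every ball $B$ of $Y$. If $B$ contains neither or both of $a$ and $b$, convexity follows from the convexity of the two one-point extensions, together with the consistency of the cut placements. If $B$ contains exactly one of them, then $B\setminus\set{a,b}$ is a ball of $X$, and the separating point lies on the correct side by the choice made in the relevant case. We expect this case check in Case~2 to be the delicate part.
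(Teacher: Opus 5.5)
Your Case~1 is sound. There the order of $a$ and $b$ is forced by convexity of $B_r(a)$, your witness-independence argument is correct, and the remaining convexity check is routine. The gap is in Case~2 when the cuts of $a$ and $b$ in $(X,\preceq)$ differ, which is exactly the subcase you flag as the obstacle.

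First, your prescription as written (``the least $r\in D_X$ such that all points between the cuts lie in $\bar B_r(a)$'') can overshoot or fail to exist, because the relevant distance need not lie in $D_X$. Take $X=\set{x_1,x_2}$ with $d(x_1,x_2)=2$ and $D_X=\set{0,2}$. Put $d(a,x_1)=d(b,x_1)=1$ and $d(a,x_2)=d(b,x_2)=2$, with $a\prec x_1\prec x_2$ and $x_1\prec b\prec x_2$. Your rule gives $d(a,b)=2$, and the triangle $a,b,x_1$ with sides $2,1,1$ is not ultrametric.

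Second, and more to the point, the metric condition is not ``$d(a,b)\le d(a,x)$ whenever the cuts force it''. Since $d(a,x)=d(b,x)$, the triangle $a,b,x$ is ultrametric iff $d(a,b)\le d(a,x)$, and this is needed for \emph{every} $x\in X$, i.e. $d(a,b)\le r_0:=\min_{x\in X}d(a,x)$. So you need a fact you never state: every $x$ strictly between the two cuts satisfies $d(a,x)=r_0$. Indeed, suppose $s=d(a,x)>r_0$ and $d(a,x^*)=r_0$. The balls $B_s(a)$ of $X\cup\set{a}$ and $B_s(b)$ of $X\cup\set{b}$ contain $x^*$ but not $x$. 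Convexity then gives $a\prec x\iff x^*\prec x\iff b\prec x$, so $x$ does not separate $a$ from $b$.

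With this fact, simply put $d(a,b)=r_0$ throughout the indistinguishable case (for $X\neq\emptyset$), and order $a,b$ according to their cuts. The check you postpone then becomes immediate:
\begin{itemize}
\item a ball of $X\cup\set{a,b}$ containing exactly one of $a,b$ is a singleton;
\item a ball containing both contains $\bar B_{r_0}(a)$, hence every point lying between $a$ and $b$;
\item every other betweenness configuration takes place inside $X\cup\set{a}$ or $X\cup\set{b}$.
\end{itemize}

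The paper argues differently and more briefly. It takes the metric amalgam from Lemma~\ref{thm:one_point_amalgamation}, combines the linear orders of adjacency classes of the two extensions into partial orders on the adjacency classes of $X\cup\set{a,b}$, and extends these linearly. Your concern applies there too. With the choice $d(a,b)=\min(D_Y\setminus\set{0})$ made in the proof of Lemma~\ref{thm:one_point_amalgamation}, the combined relation can be cyclic. For example, take $X=\set{x}$, $D_X=\set{0,1,2}$, $d(a,x)=d(b,x)=2$ and $a\prec x\prec b$; then $d(a,b)=1$ makes $\set{a,b}=B_2(a)$ a ball with $x$ between its points. It is again the choice $d(a,b)=r_0$ that makes the classes $\set{a}$ and $\set{b}$ at level $r_0$ distinct, so that the two given orders at that level overlap only on classes coming from $X$ and their union is acyclic.
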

\begin{proof}
    From Lemma~\ref{thm:one_point_amalgamation} we have $X \cup \set{a, b} \in \UltIso^{\Qpos}_\fin$. The linear orders on adjacency equivalence classes of $X \cup \{a\}$ and $X \cup \{ b\}$ combine into  partial orders on adjacency equivalence classes of $X \cup \{a, b\}$, which we extend to  linear orders arbitrarily.
    These linear extensions correspond to a  convex order $\preceq$ extending $\preceq_{X \cup \set{a}} \cup \preceq_{X \cup \set{b}}$.
\end{proof}

\begin{lm}\label{lm:C_inclusion}
    The inclusion functor $\IsoConv_\fin \subs \Ult^\prec_\fin$ is essentially wide and absorbing.
\end{lm}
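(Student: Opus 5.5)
The plan is to mirror the proof of Lemma~\ref{lm:inclusion}, carrying the convex order along unchanged. The convex order lives only on the point sort. A dc-isomorphism that is the identity on points and a relabelling on distances preserves the ball structure, because balls depend only on the order type of distances. So such a relabelling keeps every convex order convex, and the argument reduces to relabelling distances by rationals.

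\textbf{Absorption.} I will in fact prove the stronger essential star-surjectivity. Let $X \in \IsoConv_\fin$, $Y \in \Ult^\prec_\fin$, and let $f\maps X \to Y$ be a $\UltConv$-map, i.e.\ a dc-embedding preserving $\preceq$. Since $\Qpos$ has the extension property for $\Lin^0_\fin$, I will choose an $\Lin^0$-isomorphism $\phi\maps D_Y \to E \subs \Qpos$ with $\phi \cmp D_f$ equal to the inclusion $D_X \subs E$. I then put $Y' = \seq{Y, \phi \cmp d_Y, E}$ with the same order $\preceq_Y$. The key check is that $Y'$ is convexly ordered. This holds because for every $a \in Y$ and $r \in D_Y$ we have $B^{Y'}_{\phi(r)}(a) = B^{Y}_r(a)$ and $\bar{B}^{Y'}_{\phi(r)}(a) = \bar{B}^Y_r(a)$, as $\phi$ is an order isomorphism. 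Hence $Y$ and $Y'$ have literally the same balls, and each ball is $\preceq_Y$-convex. Consequently $g = \seq{\id{Y}, \phi}\maps Y \to Y'$ is a $\UltConv$-isomorphism, since it is the identity on points and so preserves the order. Moreover $g \cmp f$ is an isometric, order-preserving embedding $X \to Y'$, that is, an $\IsoConv$-map. This gives absorption, indeed essential star-surjectivity.

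\textbf{Essential wideness.} Given $Y \in \Ult^\prec_\fin$, I apply the construction above to the empty space $X = \seq{\emptyset, \set{0}}$ with its unique embedding into $Y$. This yields $Y' \in \IsoConv_\fin$ with $Y' \cong Y$.

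\textbf{Main obstacle.} The only step needing care is the claim that relabelling distances does not change the family of balls. It follows directly from $\phi$ preserving and reflecting both $<$ and $\leq$ and fixing $0$, so I expect no real difficulty.
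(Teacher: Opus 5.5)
Your proposal is correct and follows essentially the paper's route: the paper's proof simply says to repeat the proof of Lemma~\ref{lm:inclusion} line by line, because relabelling distances does not interact with the convex order. Your explicit check that the order isomorphism $\phi$ leaves every open and closed ball unchanged, so that convexity is preserved, is exactly the justification behind that remark, and your argument also gives essential star-surjectivity as in Lemma~\ref{lm:inclusion}.
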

\begin{proof}
    Since the distance sets do not interact with the convex order, we can follow the proof of Lemma~\ref{lm:inclusion} line by line.
\end{proof}

\begin{lm}\label{thm:forget_convex}
    The forgetful functor $F^\prec_\fin\maps \Ult^\prec_\fin \to \Ult_\fin$ is wide and star-surjective.
\end{lm}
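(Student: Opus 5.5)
Two things need to be shown: that $F^\prec_\fin$ is surjective on objects, and that every dc-embedding out of the underlying space of a convexly ordered space can be lifted to a convexly ordered extension. Both reduce to the observation made above: a convex order on a finite ultrametric space is the same as a choice of a linear order on every adjacency equivalence class of balls. Equivalently, in the tree of precise balls (Section~\ref{sec:treerep}), it is a linear order on the branching classes at every node. The order is recovered by declaring $x \prec x'$ iff $B(x,x') \prec B(x',x)$.

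\emph{Wideness.} For $X \in \Ult_\fin$, choose an arbitrary linear order on each (finite) adjacency equivalence class of $\Bee_X$. This defines a convex order on $X$, and $X$ with this order is a preimage of $X$ under $F^\prec_\fin$. Alternatively, one can verify directly that any lexicographic order coming from the finite tree of precise balls makes every ball convex.

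\emph{Star-surjectivity.} Let $X' = \seq{X, \preceq_X} \in \Ult^\prec_\fin$ and let $f\maps X \to Y$ be a dc-embedding with $Y \in \Ult_\fin$. We must find a convex order $\preceq_Y$ on $Y$ such that $f$ preserves order. The plan is to transport the order via $\Bee_f$ and then extend.
\begin{enumerate}
\item By Proposition~\ref{prop:thebee}, $\Bee_f$ maps adjacent pairs to adjacent pairs and is injective. So each adjacency class $\mathcal{A}$ of $\Bee_X$ maps injectively into a single adjacency class of $\Bee_Y$. This holds because $B \sim B'$ implies $\Bee_f(B) \sim \Bee_f(B')$, and the relation is an equivalence modulo reflexivity.
\item Distinct classes of $\Bee_X$ land in distinct classes of $\Bee_Y$. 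Indeed, if $\Bee_f(B) \sim \Bee_f(B')$, then $B \sim B'$, because $\Bee_f$ reflects adjacency ($\ATree$-map). It follows that the orders $\Bee_f[\preceq]$ on the images are consistent: on each class of $\Bee_Y$ we obtain a linear order of a subset, coming from at most one class of $X$.
\item On each adjacency class of $\Bee_Y$, extend this partial linear order to a linear order of the whole class arbitrarily. This yields a convex order $\preceq_Y$ on $Y$.
\end{enumerate}

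It remains to check that $f$ preserves order. For $x \neq x'$ with $x \prec_X x'$ we have $B(x,x') \prec B(x',x)$. Applying $\Bee_f$ gives $\Bee_f(B(x,x')) = B(f(x),f(x'))$, and by construction $B(f(x),f(x')) \prec B(f(x'),f(x))$, hence $f(x) \prec_Y f(x')$. Thus $f$ is a map $X' \to \seq{Y,\preceq_Y}$ in $\Ult^\prec_\fin$ whose image under $F^\prec$ is $f$.

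The main obstacle is step (2), the consistency of the transported orders, which is exactly where reflection of adjacency from Lemma~\ref{thm:adjacency_preserving} is needed. A smaller check is that the correspondence between convex orders and orders of adjacency classes holds for every finite space, so that the order defined in step (3) is indeed convex. For this, use that every ball of a finite space is a union of smaller adjacent balls within one class together with the description $x \prec x'$ iff $B(x,x') \prec B(x',x)$. Transitivity of the resulting relation also needs a check: for three points one compares adjacency envelopes, which are nested in the tree.
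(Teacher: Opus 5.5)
Your proof is correct and follows the paper's argument. Both identify convex orders with linear orders on adjacency classes and linearly extend, arbitrarily, the orders that $X$ induces on the adjacency classes of $Y$; the paper phrases this for an inclusion $X \le Y$, while you phrase it via $\Bee_f$. Your step (2) is true but not actually needed: injectivity of $\Bee_f$ already makes the transported relation on each class of $\Bee_Y$ a partial order, which can then be linearly extended.
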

\begin{proof}
    $F^\prec_\fin$ is wide as any ultrametic space can be endowed with a convex order.
    $F^\prec_\fin$ is star-surjective as for any ultrametric spaces $X \leq Y$ any convex order on $X$ can be extended to a convex order on $Y$ – we just linearly extend the induced partial orders on adjacency equivalence classes of $Y$ as in the proof of Lemma~\ref{thm:C_one_point_amalgamation}.
\end{proof}

\begin{tw} \label{thm:CU_Fraisse}
    $\IsoConv_\fin$ and $\Ult^\prec_\fin$ are Fraïssé classes whose Fraïssé limits can be identified. Denoting this  Fraïssé limit by $\U^\prec$, we obtain that $F^\prec(\U^\prec)$ is dc-isomorphic to $\U$.
    In other words, $\U$ can be endowed with a generic convex order.
\end{tw}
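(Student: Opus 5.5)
We mirror the proof of Theorem~\ref{thm:U_Fraisse}, with $\IsoConv$ in place of $\UltIso^{\Qpos}$, and then transfer along the forgetful functor using Proposition~\ref{thm:Flim_transfer}.

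\emph{Step 1: $\IsoConv_\fin$ is Fraïssé.} The amalgamation property follows from Lemma~\ref{thm:C_one_point_amalgamation}. Isometric embeddings preserving the order are, up to isomorphism, inclusions. It therefore suffices to amalgamate $X \leq B$ and $X \leq C$ with $B \setminus X$ and $C \setminus X$ finite. We do this by the standard induction, adding one point at a time. Each stage is an amalgamation of two one-point extensions of the current common substructure: we amalgamate $B$ with $X \cup \set{c_1}$, then the result with $X \cup \set{c_1, c_2}$, and so on.

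The strong form of the lemma ensures that the new points stay distinct. For the induction we need the one-point lemma over an arbitrary base in $\IsoConv_\fin$, which is exactly what is stated. There are countably many isomorphism types, since distances are rational and the spaces are finite. The empty space $\seq{\emptyset, \set{0}}$ embeds uniquely into every object, so JEP follows from AP.

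\emph{Step 2: transfer to $\UltConv_\fin$.} Apply Proposition~\ref{thm:Flim_transfer}(1) to the inclusion $\IsoConv \subs \UltConv$. This inclusion is $\sig$-continuous, because colimits are unions. By Lemma~\ref{lm:C_inclusion}, its finite restriction is essentially wide and absorbing. Hence $\UltConv_\fin$ is Fraïssé, and its limit is the limit $\U^\prec$ of $\IsoConv_\fin$, regarded as an object of $\UltConv$. This justifies identifying the two limits.

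\emph{Step 3: forgetting the order.} Apply Proposition~\ref{thm:Flim_transfer}(1) to $F^\prec\maps \UltConv \to \Ult$. Its finite restriction is wide and star-surjective by Lemma~\ref{thm:forget_convex}, hence essentially wide and absorbing. $F^\prec$ is $\sig$-continuous, since forgetting the order commutes with unions: every point and distance of $F^\prec(X_\infty)$ already lies in some $X_n$. Therefore $F^\prec(\U^\prec)$ is the Fraïssé limit of $\Ult_\fin$, i.e.\ it is dc-isomorphic to $\U$ by Theorem~\ref{thm:U_Fraisse} and the uniqueness part of Theorem~\ref{thm:Flim}.

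\emph{Main obstacle.} The only step that is not bookkeeping is Step 1, namely reducing AP to one-point strong amalgamation in the convex-order setting. There are two points to check.

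First, when the one-point lemma is iterated, the convex orders chosen at each stage must remain compatible with the orders already fixed on $B$ and $C$. This holds because each stage only extends a convex order on a union of two given spaces, and those are substructures.

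Second, the union of two convex orders must be a partial order on each adjacency class. If the classes in question were already in $X$, their orders agree there. The lemma handles this case, so it only needs to be invoked correctly.
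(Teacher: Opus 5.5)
Your proposal is correct and follows the paper's proof. The paper also gets the first part by repeating the argument of Theorem~\ref{thm:U_Fraisse} with Lemmas~\ref{thm:C_one_point_amalgamation} and~\ref{lm:C_inclusion}, and the second part by applying Proposition~\ref{thm:Flim_transfer}(1) to $F^\prec$ via Lemma~\ref{thm:forget_convex}; you simply spell out the routine bookkeeping the paper leaves implicit (reducing AP to one-point amalgamation, JEP via the empty space, $\sig$-continuity).
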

\begin{proof}
    The first part is proved analogously to the proof of Theorem~\ref{thm:U_Fraisse} with the use of Lemmas~\ref{thm:C_one_point_amalgamation} and \ref{lm:C_inclusion}.
    For the second part we again use the transfer principle (Proposition~\ref{thm:Flim_transfer}(1)) applied to $F^\prec$ with a help of Lemma~\ref{thm:forget_convex}.
\end{proof}

\begin{uwgi}
    The previous theorem again demonstrates a phenomenon when a single structure is the Fraïssé limit of multiple Fraïssé classes. Namely, $\U^\prec$ is simultaneously homogeneous for $\IsoConv_\fin$, $\UltIso^{\Qpos}_\fin$, $\Ult^\prec_\fin$, and $\Ult_\fin$.
\end{uwgi}

\subsection{Bounding the equilateral sets}

We will also consider ultrametric spaces with bounds for the cardinalities of equilateral sets.
A subset $S \subs X$ of an ultrametric space is \emph{$r$-equilateral} if $d(x, y) = r$ for every $x \neq y \in S$.
For every $x \in X$ and $r \in D_X \setminus \set{0}$, let $e_X(x, r) \in \N^{\geq 1} \cup \set{\infty}$ be the maximum cardinality of an $r$-equilateral subset of $X$ containing $x$ (where we do not distinguish between infinite cardinalities).
This is well-defined: $e_X(x, r)$ is either $1$ if the distance $r$ is not attained at $x$, or is equal to the number of adjacent balls of radius $r$ partitioning the precise ball $\bar{B}_r(x)$.

We say that an ultrametric space $X$ is \emph{$\delta$-bounded}, where $\delta$ is a map $E \setminus \set{0} \to \N^{\geq2} \cup \set{\infty}$ for some $D_X \leq E \in \Lin^0$, if $e_X(x, r) \leq \delta_X(r)$ for every $x \in X$ and $r \in D_X \setminus \set{0}$.

Now fix a set $M \subs \N^{\geq 2} \cup \set{\infty}$.
Let $\Ult^M$ denote the category whose objects are pairs $\seq{X, \delta_X}$ such that $X$ is a $\delta_X$-bounded ultrametric space for a map $\delta_X\maps D_X \setminus \set{0} \to M$, and whose morphisms are \emph{bounding-preserving} dc-embeddings $\seq{X, \delta_X} \to \seq{Y, \delta_Y}$, i.e. dc-embeddings $f\maps X \to Y$ such that $\delta_Y(D_f(r)) = \delta_X(r)$ for every $r \in D_X \setminus \set{0}$.

For a fixed distance set $E \in \Lin^0$ and bounding map $\delta\maps E \setminus \set{0} \to M$ we consider the full subcategory category $\UltIso^{E, \delta} \subs \UltIso^E$ of all $\delta$-bounded ultrametric spaces.
Clearly, every space $X \in \UltIso^{E, \delta}$ can be also viewed as an $\Ult^M$-object: with $\delta_X = \delta\restr{D_X \setminus \set{0}}$.
Hence, $\UltIso^{E, \delta} \subs \Ult^M$.

Let $\Lin^M$ denote the category of $M$-colored distance sets and color-preserving order-embeddings, i.e. an $\Lin^M$-object is a pair $\seq{E, \delta_E}$ with $E \in \Lin^0$ and $\delta_E\maps E \setminus \set{0} \to M$, and an $\Lin^M$-map $\seq{E, \delta_E} \to \seq{F, \delta_F}$ is an $\Lin^0$-map $f\maps E \to F$ such that $\delta_F(f(x)) = \delta_E(x)$ for every $x \in E$.
We have a functor $D^M\maps \Ult^M \to \Lin^M$ defined by $D^M(X) = \seq{D_X, \delta_X}$.
The categories and functors introduced are summarized in the following diagram.

\vspace{-2ex}
\begin{center}
\begin{tikzpicture}[
    x = {(6em, 0em)},
    y = {(0em, -4em)},
    label/.style = {
        edge label={#1},
        font=\footnotesize,
    },
]
    \node (I) at (0, 0) {$\UltIso^E$};
    \node (U) at (1, 0) {$\Ult$};
    \node (L) at (2, 0) {$\Lin^0$};
    \node (IM) at (0, 1) {$\UltIso^{E, \delta}$};
    \node (UM) at (1, 1) {$\Ult^M$};
    \node (LM) at (2, 1) {$\Lin^M$};
    
    \graph{
        (I) ->[label=$\subs$] (U) ->[label=$D$] (L),
        (IM) ->[label=$\subs$] (UM) ->[label=$D^M$] (LM),
        (IM) ->[label=$\subs$] (I),
        (UM) -> (U),
        (LM) -> (L),
    };
\end{tikzpicture}
\end{center}

It is easy to see that $\Lin^M_\fin$ is a Fraïssé class with Fraïssé limit $\Q^M = \seq{\Qpos, \delta_\gen}$ where $\delta_\gen\maps \Q^{>0} \to M$ is a \emph{generic coloring}, i.e. $\delta_\gen\fiber{m} \subs \Q^{>0}$ is order-dense for every $m \in M$.

\begin{lm} \label{thm:M_one_point_amalgamation}
    Let $X \in \UltIso^{E, \delta}_\fin$.
    Every two one-point extensions $X \cup \set{a}, X \cup \set{b} \in \UltIso^{E, \delta}_\fin$ can be amalgamated.
\end{lm}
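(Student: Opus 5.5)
The plan is to follow the proof of Lemma~\ref{thm:one_point_amalgamation}. The amalgam is built there, and the only new thing to verify is that it stays $\delta$-bounded. We will not aim for a \emph{strong} amalgam, since that can fail. For example, if $X \cup \set{a}$ and $X \cup \set{b}$ are isomorphic over $X$ and every relevant equilateral set already has the maximal size $\delta(r)$, then no new point fits. So we split into two cases, according to whether the extensions are isomorphic over $X$.

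\emph{Case 1: $d(a, x) = d(b, x)$ for all $x \in X$.} Then the identity on $X$ extends to an isometry $X \cup \set{a} \to X \cup \set{b}$ sending $a \mapsto b$. Hence $X \cup \set{b}$, together with the inclusion of $X \cup \set{b}$ and this isometry from $X \cup \set{a}$, is an amalgam in $\UltIso^{E, \delta}_\fin$. Amalgamation in the sense of (AP) does not require the two maps into the amalgam to have disjoint images outside $X$, so this suffices.

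\emph{Case 2: the extensions are non-isomorphic.} Say $s = d(a, x_0) < d(b, x_0) = r$ for some $x_0 \in X$. By Lemma~\ref{thm:one_point_amalgamation}, putting $d(a, b) = r$ makes $Y = X \cup \set{a, b}$ an ultrametric space with $D_Y \subs E$. It remains to show $e_Y(y, t) \leq \delta(t)$ for all $y \in Y$ and $t > 0$. Let $S \subs Y$ be a $t$-equilateral set.
\begin{itemize}
\item If $S$ omits $a$ or omits $b$, then $S$ lies in $X \cup \set{b}$ or in $X \cup \set{a}$. Both are $\delta$-bounded, so $\card{S} \leq \delta(t)$.
\item If $a, b \in S$, then $t = d(a, b) = r$. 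Since $d(a, x_0) = s < r$, we have $x_0 \notin S$.
\end{itemize}

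The key step in the second subcase is an exchange argument: replace $a$ by $x_0$. For every $y \in S \setminus \set{a}$ we have $d(a, y) = r > s = d(a, x_0)$. The ultrametric inequality in the triangle $a, x_0, y$ then gives $d(x_0, y) = r$. Also $d(x_0, b) = r$ directly. Therefore $(S \setminus \set{a}) \cup \set{x_0}$ is an $r$-equilateral subset of $X \cup \set{b}$ of the same cardinality as $S$, and so $\card{S} \leq \delta(r)$.

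Since $e_Y(y, t)$ is the maximal size of a $t$-equilateral set containing $y$, these cases together give that $Y$ is $\delta$-bounded. This completes the amalgamation. The only point requiring care is this exchange step, which ensures that the single new distance $d(a, b)$ does not enlarge any equilateral set beyond what already occurs in one of the two given extensions.
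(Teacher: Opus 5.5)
Your proof is correct and matches the paper's argument: in the isomorphic case you identify $a$ and $b$. Otherwise you take the forced amalgam with $d(a,b)=r$ from Lemma~\ref{thm:one_point_amalgamation}, and replace $a$ by $x_0$ to carry any new $r$-equilateral set into $X \cup \{b\}$ without changing its size.
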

\begin{proof}
    If the two extensions are isomorphic, i.e. $d(a, x) = d(b, x)$ for every $x \in X$, we just identify $a$ and $b$, so the amalgam is clearly $\delta$-bounded.
    Otherwise, without loss of generality we have $s = d(a, x_0) < d(b, x_0) = r$ for some $x_0 \in X$.

    By Lemma~\ref{thm:one_point_amalgamation}, putting $d(a, b) = r$ gives the unique amalgam $X \cup \set{a, b}$ in $\UltIso^E_\fin$.
    We show that $X \cup \set{a, b}$ is $\delta$-bounded.
    Any equilateral set that is not already contained in $X \cup \set{a}$ or $X \cup \set{b}$ is an $r$-equilateral set of the form $\set{a, b, y_1, \ldots, y_n}$.
    For every $i \leq n$ we have $d(a, x_0) = s < r = d(a, y_i)$, and so $d(x_0, y_i) = r$.
    Hence, $\set{x_0, b, y_1, \ldots, y_n} \subs X \cup \set{b}$ is an $r$-equilateral set of the same cardinality.
\end{proof}

\begin{lm} \label{lm:M_inclusion}
    The inclusion functor $\UltIso^{\Q^M}_\fin \subs \Ult^M_\fin$ is essentially wide and essentially star-surjective.
\end{lm}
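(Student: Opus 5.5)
The plan is to mirror the proof of Lemma~\ref{lm:inclusion}, replacing $\Qpos$ by the generic colored distance set $\Q^M = \seq{\Qpos, \delta_\gen}$ and $\Lin^0$ by $\Lin^M$. Here $\UltIso^{\Q^M}$ stands for $\UltIso^{\Qpos, \delta_\gen}$, viewed inside $\Ult^M$ by equipping each space $X$ with $\delta_X = \delta_\gen\restr{D_X \setminus \set{0}}$. The key input is that $\Q^M$ is the Fraïssé limit of $\Lin^M_\fin$, so it has the extension property for $\Lin^M_\fin$.

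For essential star-surjectivity, let $X \in \UltIso^{\Q^M}_\fin$, let $\seq{Y, \delta_Y} \in \Ult^M_\fin$, and let $f\maps X \to Y$ be a bounding-preserving dc-embedding. Then $D^M(f) = D_f$ is an $\Lin^M$-map from $\seq{D_X, \delta_\gen\restr{D_X\setminus\set{0}}}$ to $\seq{D_Y, \delta_Y}$. The inclusion $D_X \subs \Qpos$ is a color-preserving embedding into $\Q^M$. By the extension property of $\Q^M$, there is a color-preserving order embedding $\phi\maps D_Y \to \Qpos$ with $\phi \cmp D_f$ equal to that inclusion. We put $E = \phi\im{D_Y}$ and define $Y' = \seq{Y, \phi \cmp d_Y, E}$ with $\delta_{Y'} = \delta_\gen\restr{E \setminus\set{0}}$. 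Since $\phi$ is color-preserving, $\delta_{Y'}(\phi(r)) = \delta_Y(r)$, and relabelling distances by an order isomorphism does not change which sets are equilateral. It follows that $e_{Y'}(y, \phi(r)) = e_Y(y, r) \leq \delta_Y(r) = \delta_\gen(\phi(r))$, so $Y'$ is $\delta_\gen$-bounded and lies in $\UltIso^{\Q^M}_\fin$. The pair $g = \seq{\id{Y}, \phi}\maps Y \to Y'$ is then a bounding-preserving dc-isomorphism, and $g \cmp f$ is the identity on points with distance part the inclusion $D_X \subs E$. Hence $g \cmp f$ is an isometric embedding of $\delta_\gen$-bounded spaces, i.e.\ a morphism of $\UltIso^{\Q^M}_\fin$.

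For essential wideness, I would apply the same argument with $X$ the empty space with $D_X = \set{0}$. Its unique map into any $\seq{Y, \delta_Y}$ is bounding-preserving (the condition is vacuous), so the construction yields $Y' \cong Y$ in $\UltIso^{\Q^M}_\fin$.

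I expect the only genuinely new point compared with Lemma~\ref{lm:inclusion} to be checking that the boundedness condition and the colors transfer along $\phi$. This reduces to the observation that $e_X(x,r)$ depends only on which pairs share a distance, so it is invariant under order isomorphisms of the distance set. The remaining steps are routine.
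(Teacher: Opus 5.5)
Your proposal is correct and follows the paper's proof essentially verbatim. You use the extension property of $\Q^M$ for $\Lin^M_\fin$ to get a color-preserving $\phi$ with $\phi \cmp D_f$ the inclusion, relabel the distances of $Y$ by $\phi$, and obtain essential wideness from the empty space. Your explicit check that $\delta_\gen$-boundedness transfers along $\phi$ fills in a step the paper leaves implicit.

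One harmless slip: the point part of $g \cmp f$ is $f$ itself, not the identity. This does not matter, since all that is needed is that the distance part of $g \cmp f$ is the inclusion $D_X \subs E$.
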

\begin{proof}
    We proceed as in the proof of Lemma~\ref{lm:inclusion}.
    Given $X \in \UltIso^{\Q^M}_\fin$ and an $\Ult^M_\fin$-embedding $f\maps X \to Y$, we find an $\Lin^M$-isomorphism $\phi\maps \seq{D_Y, \delta_Y} \to \seq{D_{Y'}, \delta_{Y'}} \leq \seq{\Qpos, \delta_\gen}$ such that $\phi \cmp D_f$ is the inclusion $\seq{D_X, \delta_X} \leq \seq{D_{Y'}, \delta_{Y'}}$.
    This is possible since $\Q^M$ has the extension property for $\Lin^M_\fin$.
    By putting $Y' = \seq{Y, \phi \cmp d_Y, D_{Y'}, \delta_{Y'}}$ we obtain an $\Ult^M_\fin$-isomorphism $g = \seq{\id{Y}, \phi}\maps Y \to Y'$ such that $g \cmp f$ is an $\UltIso^{\Q^M}_\fin$-embedding.
    We prove essential wideness similarly.
\end{proof}

\begin{tw} \label{thm:MU_Fraisse}
    Let $M \subs \N^{\geq 2} \cup \set{\infty}$.    
    Then $\Ult^M_\fin$ is a Fraïssé class, its Fraïssé limit $\U^M$ is iso-homogeneous, and up to isomorphism we have $D^M(\U^M) = \seq{\Qpos,\delta_\gen}$.
    Moreover, $\U^M$ is precise and for every $x \in \U^M$ and $r \in \Q^{>0}$ we have $e_{\U^M}(x, r) = \delta_\gen(r)$.
\end{tw}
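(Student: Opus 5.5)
The plan mirrors the proof of Theorem~\ref{thm:U_Fraisse}. First, fix $E = \Qpos$ with the generic coloring $\delta = \delta_\gen$, and show that $\UltIso^{\Q^M}_\fin$ (that is, $\UltIso^{\Qpos,\delta_\gen}_\fin$) is a Fraïssé class. It has countably many isomorphism types, since both the points and the distances are finite. The empty space $\seq{\emptyset,\set{0}}$ embeds uniquely into every object, so JEP follows from AP. AP reduces to amalgamating one-point extensions, which is Lemma~\ref{thm:M_one_point_amalgamation}. The reduction works because the class is hereditary: a subspace of a $\delta$-bounded space is $\delta$-bounded, as $e$ can only decrease. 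We then iterate one-point amalgamations in the usual way.

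Next, apply Proposition~\ref{thm:Flim_transfer}(1) to the inclusion $\UltIso^{\Q^M} \subs \Ult^M$. This inclusion is $\sig$-continuous, and by Lemma~\ref{lm:M_inclusion} its finite restriction is essentially wide and essentially star-surjective, hence absorbing. So $\Ult^M_\fin$ is Fraïssé, and its limit $\U^M$ is the limit of $\UltIso^{\Q^M}_\fin$, which gives iso-homogeneity. Applying Proposition~\ref{thm:Flim_transfer}(1) again, to $D^M\maps \Ult^M \to \Lin^M$, we obtain $D^M(\U^M) \cong \seq{\Qpos,\delta_\gen}$. For this we check that $D^M_\fin$ is wide and absorbing. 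Wideness holds because the space with a single point (or the empty space) carrying an arbitrary colored distance set is $\delta$-bounded, since no distance is attained. Absorption holds because enlarging the colored distance set while keeping the same points is a valid extension. Finally, $D^M$ is $\sig$-continuous.

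The remaining claims, preciseness and $e_{\U^M}(x,r) = \delta_\gen(r)$, come from the extension property, and this is the main step to get right. Fix $x \in \U^M$ and $r \in \Q^{>0}$, and let $m = \delta_\gen(r)$. For every finite $k \le m$ with $k \ge 2$, take a finite $r$-equilateral set $S = \set{x=s_1,\dots,s_k}$, colored by $\delta_\gen$ on its distance set $\set{0,r}$. Then $S$ is $\delta$-bounded, and the single point $\set{x}$ carrying distance set $\set{0,r}$ embeds into both $S$ and $\U^M$. The extension property then places a copy of $S$ in $\U^M$ containing $x$. This gives $e_{\U^M}(x,r) \ge k$ for all finite $k \le m$, and hence $e_{\U^M}(x,r) = \infty$ when $m = \infty$. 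Taking $k = 2$ shows that $r$ is attained, so $\U^M$ is precise. The upper bound $e_{\U^M}(x,r) \le m$ holds because every finite subspace of $\U^M$ is $\delta_\gen$-bounded, and the notion of equilateral set is local to finite sets. One point needs care here: embeddings must preserve colors, so we use $D_{\U^M} = \Qpos$ with coloring $\delta_\gen$ exactly, as identified in the previous paragraph.
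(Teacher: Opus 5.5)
Your proof is correct and follows essentially the same route as the paper. Both amalgamate one-point extensions in $\UltIso^{\Q^M}_\fin$ via Lemma~\ref{thm:M_one_point_amalgamation}, transfer along the inclusion into $\Ult^M_\fin$ via Lemma~\ref{lm:M_inclusion} and Proposition~\ref{thm:Flim_transfer}(1), and use the extension property over $\set{x}$ to realize $r$-equilateral sets. The only minor differences are two. You identify $D^M(\U^M)$ by a second transfer along $D^M$, whereas the paper reads it off directly from $\U^M$ being the limit of $\UltIso^{\Q^M}_\fin$. You also spell out the upper bound $e_{\U^M}(x,r) \le \delta_\gen(r)$, which the paper leaves implicit.
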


\begin{proof}
    As in the proof of Theorem~\ref{thm:U_Fraisse}, with the use of Lemma~\ref{thm:M_one_point_amalgamation}, Lemma~\ref{lm:M_inclusion}, and Proposition~\ref{thm:Flim_transfer}(1), we obtain that $\UltIso^{\Q^M}_\fin \subs \Ult^M_\fin$ are both Fraïssé classes with the same Fraïssé limit $\U^M$.
    Since $\U^M$ is Fraïssé limit of $\UltIso^{\Q^M}_\fin$ and since every isometry is bounds-preserving, we have that $\U^M$ is iso-homogeneous and $D^M(\U^M) = \seq{\Qpos, \delta_\gen}$.
    For every $x \in \U^M$, $r \in \Q^{>0}$ and finite $n \leq e_{\U^M}(x, r)$ we consider an $r$-equilateral space $Y = \set{x = y_1, y_2, \ldots, y_n}$.
    By the isometric extension property, $Y$ has an isomeric copy over $\set{x}$ in $\U^M$.
\end{proof}

Note that if $M = \set{\infty}$, there are no restrictions on equilateral sets and hence $\U^{\set{\infty}} = \U$.
The other extreme is $\U^{\set{2}}$, where no equilateral triangles are allowed.

From now on we just work with $\U$ rather than general $\U^M$.
We expect that most of results we prove for $\U$ generalize for spaces $\U^M$ (however, see Malicki~\cite{Mal}, where the cases $M = \set{\infty}$ and $M = \set{2}$ are special in the context of the structure of conjugacy classes in isometry groups).

Note that we cannot combine adding convex order and bounds on equilateral sets as this does not lead to an amalgamation class: for one-point extensions $\set{a, x} \sups \set{x} \subs \set{x, b}$ with $d(a, x) = d(x, b)$, the convex order $a \prec x \prec b$ forces us to create an equilateral triangle in every amalgamation.

\subsection{Properties of \texorpdfstring{$\clo{\U}$}{cl(U)} and its ball structure}\label{sec:cloU}

In Section \ref{sec:topandCau} we introduced Cauchy completions. In that notation $\clo{\U}$ is the Cauchy completion of $\U$; we can identify it with the Cauchy  completion of the rational Urysohn ultrametric space. In this section, we prove  universality of $\clo{\U}$ and we identify $\AutM(\clo{\U})$ as the automorphism group of the structure of balls of $\U$.

It follows directly from the fact that $\U$ is the Fraïssé limit of $\Ult_\fin$ (Theorem~\ref{thm:U_Fraisse}) and from general theory (Theorem~\ref{thm:Flim}) that every countable ultrametric space can be dc-embedded into $\U$.
This can be improved.
\begin{prop} \label{thm:universal_separable}
    For every countable ultrametric space $X$ there is a uniformly continuous dc-embedding $e\maps X \to \U$.
\end{prop}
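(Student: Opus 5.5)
The proof splits according to Lemma~\ref{thm:dc-continuity}(2). A dc-embedding $e\maps X \to \U$ is uniformly continuous as soon as $D_e$ is continuous at $0$. Since $D_\U \cong \Qpos$, the task reduces to choosing the distance part well. Concretely, we first find an $\Lin^0$-embedding $\phi\maps D_X \to \Qpos$ that is continuous at $0$. Then we replace $X$ by the space $X' = \seq{X, \phi \cmp d_X, \Qpos}$, which lies in $\UltIso^{\Qpos}$, and isometrically embed $X'$ into $\U \cong \U_\Q$. Composing the dc-isomorphism $\seq{\id{X}, \phi}$ with that isometric embedding gives a dc-embedding $e$ with $D_e$ equal to $\phi$ followed by an isometric inclusion $\Qpos \to \Qpos$. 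Then $D_e = \phi$ is continuous at $0$, and $e$ is uniformly continuous.

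\textbf{Step 1: isometric universality.} I would show that every countable $X' \in \UltIso^{\Qpos}$ embeds isometrically into $\U$. Write $X' = \bigcup_n X_n$ with $X_n$ finite, and regard each $X_n$ (with distance set $D_{X_n} \cup \set{\text{distances used}}$, a finite subset of $\Qpos$) as an object of $\UltIso^{\Qpos}_\fin$. By Theorem~\ref{thm:U_Fraisse}, $\U$ is the Fraïssé limit of $\UltIso^{\Qpos}_\fin$ and so has the extension property for it (Theorem~\ref{thm:Flim}). Building the embedding one point at a time along the chain then gives an isometric embedding. Only the distances actually realized matter, so the distance sort causes no trouble: the inclusion on the distance sort is fixed.

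\textbf{Step 2: the order embedding $\phi$.} This is the main obstacle. $D_X$ is countable but need not be; if $D_X$ is uncountable or merely weird, note that only $d_X[X\times X]$, which is countable, is needed for $X'$. However, $D_e$ must be defined on all of $D_X$, so $\phi$ has to embed all of $D_X$. If $D_X$ is uncountable it does not embed into $\Q$ at all. So I would read ``countable ultrametric space'' as having countable point sort \emph{and} distance sort, or else first pass to the precise subspace $\seq{X, d_X[X\times X]}$. Assuming $D_X$ is countable, there are two cases.

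\emph{Case (a): $0$ is a limit in $D_X$.} That is, there is no least positive element. Take any $\Lin^0$-embedding $\psi\maps D_X\to\Qpos$ (countable linear orders embed into $\Q$). Then compose with an order isomorphism of $\Qpos$ sending $\inf \psi[D_X\setminus\set{0}]$ to $0$. Concretely, if $c=\inf$ exists in $\R$, use $\phi(r)=\psi(r)-c$ composed with an order-isomorphism $\Q\cap(0,\infty)$-shift; more cleanly, embed $D_X\setminus\set{0}$ into $\Q^{>0}$ coinitially. This is possible because $D_X\setminus\set{0}$ has no least element, so it has a coinitial sequence $r_0>r_1>\cdots$. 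Send $r_n$ into $(0,2^{-n})$ inductively via a back-and-forth/extension argument: enumerate $D_X$ and extend $\phi$ one element at a time, always choosing images inside $(0,\infty)$ while maintaining $\phi(r_n)<2^{-n}$. This is feasible since each new element lies in an interval determined by finitely many previous values, and that interval, intersected with the constraint, is nonempty because $\Q$ is dense. Hence $\phi$ is continuous at $0$.

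\emph{Case (b): $D_X\setminus\set{0}$ has a least element.} Then $X$ is uniformly discrete, so any $\phi$ works by Lemma~\ref{thm:dc-continuity}(2).

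Combining Step 2 with Step 1 and the composition described at the start finishes the proof.
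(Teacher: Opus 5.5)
Your overall route is the paper's. You split off the uniformly discrete case, and otherwise choose an $\Lin^0$-embedding $\phi\maps D_X\to\Qpos$ with $\phi[D_X\setminus\set{0}]$ coinitial in $\Q^{>0}$. You then re-metrize to $X'=\seq{X,\phi\cmp d_X,\Qpos}$ and compose $\seq{\id{X},\phi}$ with an isometric embedding $X'\to\U$ coming from the Fraïssé limit of $\UltIso^{\Qpos}_\fin$. Step 1 and case (b) are fine. Reading ``countable'' as countable in both sorts also matches the paper, whose proof likewise needs $\phi$ on all of $D_X$. Passing to $d_X[X\times X]$ would change the object being embedded, so drop that alternative.

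The real problem is your justification of case (a); the paper simply asserts the existence of such a $\phi$ as a standard fact. The inductive construction you describe does not work with the invariant you state. That invariant only bounds the values $\phi(r_n)$. So an element $s$ with $r_{n+1}<s<r_n$ that is enumerated before $r_n$ may receive a large value, and then $r_n$ cannot be placed. For example, take $D_X=\set{0}\cup\set{2^{-n}: n\in\N}\cup\set{3/8}$ with $r_n=2^{-n}$. Enumerate $3/8$ first and put $\phi(3/8)=1$, which your invariant allows. The image of $1/2=r_1$ would then have to lie in $(1,1/2)$, which is empty. So your claim that the admissible interval always meets the constraint is false as stated.

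There are two easy repairs.
\begin{enumerate}
\item Strengthen the invariant to $\phi(s)<2^{-n}$ for every placed $s\le r_n$. The resulting bound on $s$ is nondecreasing in $s$, and by coinitiality it involves only finitely many $n$. Hence any already placed $s'<s$ satisfies $\phi(s')<\text{bound}(s')\le\text{bound}(s)$, and the admissible interval is nonempty.
\item More cleanly, take any order embedding $\psi\maps D_X\setminus\set{0}\to\Q$ and let $c=\inf\psi[D_X\setminus\set{0}]\in\R\cup\set{-\infty}$. This infimum is not attained, since there is no least element, so the image is coinitial in $(c,\infty)\cap\Q$. Compose with an order isomorphism $(c,\infty)\cap\Q\to\Q^{>0}$, which exists by Cantor's theorem and preserves coinitiality.
\end{enumerate}
The second repair is the correct form of your ``$\psi(r)-c$'' idea. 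As written, that idea fails whenever $c$ is irrational, because the values leave $\Q$.
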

\begin{proof}
    If $X$ is uniformly discrete, then every dc-embedding $X \to \U$ is uniformly continuous.
    Otherwise, $0$ is not isolated in $D_X$, and there is an $\Lin^0$-embedding $\phi\maps D_X \to \Qpos$ such that $\phi[D_X \setminus \set{0}] \subseteq \Q^{>0}$ is co-initial.
    Hence, by Lemma~\ref{thm:dc-continuity}, $f = \seq{\id{X}, \phi}\maps X \to X' = \seq{X, \phi \cmp d_X, \Qpos}$ is a uniformly continuous dc-embedding.
    We obtain $e$ as $g \cmp f$ where $g$ is any isometric embedding $X' \to \U$.
\end{proof}

\begin{wn}\label{wn:universal u bar}
    For every precise separable ultrametric space $X$ there is a uniformly continuous dc-embedding $e\maps X \to \clo{\U}$.
\end{wn}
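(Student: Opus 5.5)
The plan is to reduce to Proposition~\ref{thm:universal_separable} via a countable dense subset and then extend to completions using Observation~\ref{exttoCauchy}.

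Let $X$ be a precise separable ultrametric space and fix a countable dense $A \subs X$, viewed as the subspace $\seq{A, d_X\restr{A\times A}, D_X}$ with the full distance set $D_X$. The first check is that $D_X$ is countable, so that $A$ is a countable two-sorted space and Proposition~\ref{thm:universal_separable} applies. This is where preciseness is used. By Lemma~\ref{thm:dense_embedding}, applied to the inclusion $A \to X$ (a dense dc-embedding into a precise space), every $r \in D_X$ is attained as $d(a,a')$ with $a,a' \in A$. Hence $D_X = d\im{A\times A}$ is countable and $A$ is precise.

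Proposition~\ref{thm:universal_separable} then gives a uniformly continuous dc-embedding $e_0\maps A \to \U$. Composing with the inclusion $\U \to \clo{\U}$, which is an isometric embedding with identical distance sets and hence uniformly continuous, we obtain a uniformly continuous dc-embedding $A \to \clo{\U}$. By Observation~\ref{exttoCauchy} it extends to a dc-embedding $\bar{e}_0\maps \clo{A} \to \clo{\clo{\U}} = \clo{\U}$ with $D_{\bar e_0} = D_{e_0}$.

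The remaining point is to identify $\clo{A}$ with a space containing $X$. Since $A$ is dense in $X$ and $D_A = D_X$, the completion $\clo{X}$ is also a completion of $A$ with the same distance set. By uniqueness of the Cauchy completion, $X \subs \clo{X} = \clo{A}$ over $A$. We then set $e = \bar{e}_0\restr{X}$. This is a dc-embedding, being the restriction of one. It is uniformly continuous because $D_e = D_{e_0}$ is continuous at $0$ (this is how $e_0$ was chosen when $0$ is not isolated in $D_X$); when $0$ is isolated in $D_X$, $X$ is uniformly discrete and Lemma~\ref{thm:dc-continuity} applies directly.

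The main subtlety is the first step: without preciseness, $D_X$ could be uncountable and not attained on $A$, and Proposition~\ref{thm:universal_separable} would not apply. Lemma~\ref{thm:dense_embedding} is exactly what handles this.
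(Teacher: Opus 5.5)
Your proof is correct and follows essentially the same route as the paper: pass to a countable dense subspace, use Lemma~\ref{thm:dense_embedding} and preciseness to see that it carries all of $D_X$, apply Proposition~\ref{thm:universal_separable}, extend to Cauchy completions via Observation~\ref{exttoCauchy}, and restrict along $X \subs \clo{X} = \clo{A}$. Your explicit check that $D_X$ is countable, which Proposition~\ref{thm:universal_separable} needs and the paper leaves implicit, is a welcome addition; your separate uniform-continuity argument is also fine but unnecessary, since Observation~\ref{exttoCauchy} already makes $\bar{e}_0$ uniformly continuous.
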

\begin{proof}
    Let $X_0 \subseteq X$ be a countable dense subspace.
    By Proposition~\ref{thm:universal_separable}, there is a uniformly continuous dc-embedding $e_0\maps X_0 \to \U$.
    By Observation~\ref{exttoCauchy}, it extends to the uniformly continuous dc-embedding $\clo{e_0}\maps \clo{X_0} \to \clo{\U}$.
    Since $X$ is precise, by Lemma~\ref{thm:dense_embedding} we have $D_{X_0} = D_X$, and we can identify $\clo{X}$ with $\clo{X_0}$, and so there is a canonical isometric embedding $i\maps X \to \clo{X_0}$.
    Finally we put $e = \clo{e_0} \cmp i$.
\end{proof}

It turns out that the dc-automorphism group of $\clo{\U}$ with the metric pointwise convergence topology is Polish and can be identified with the automorphism group of the countable structure of balls $\Bee_\U$.
Moreover, $\Bee_\U$ is the Fraïssé limit of $\Ball_\fin$, i.e. it is a generic countable leveled adjacency tree.
We refer to Section~\ref{sec:adjballs} for the definition of  $\Bee_X$, the family of all adjacent balls of  an ultrametric space $X\in\Ult$, as well as for the definition of $\Ball$, the category of leveled adjacency trees, viewed as ball structures of ultrametric spaces.

\begin{prop} \label{thm:ball_automorphism_group}
    The map $\bar{\beta}\maps \AutM(\clo{\U}) \to \Aut(\Bee_{\U})$ is an isomorphism of Polish groups.
\end{prop}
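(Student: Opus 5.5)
The plan is to apply Proposition~\ref{prop: beta}(4) with $X = \U$. That item states exactly that $\bar{\beta}\maps \AutM(\clo{X}) \to \Aut(\Bee_X)$ is an isomorphism of Polish groups. Its hypotheses are that $X$ is precise, separable, and that every isolated point of $X$ is precisely isolated. So the proof reduces to checking these three properties for $\U$.

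Separability is immediate, since $\U$ is countable.

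For preciseness, I would use Theorem~\ref{thm:U_Fraisse}: $\U$ is the Fraïssé limit of $\Ult_\fin$ with $D_\U \cong \Qpos$. Given $r \in D_\U \setminus \set{0}$, take any point $x \in \U$ and the finite space $\set{x}$ with distance set $\set{0, r}$, viewed as a substructure of $\U$. Extend it to the two-point space $\set{x, y}$ with $d(x, y) = r$. The extension property (Theorem~\ref{thm:Flim}(2)) gives an embedding of this extension over $\set{x}$ into $\U$, and its distance part is the identity on $\set{0, r}$. Hence $r$ is attained. Alternatively, one can simply note that $\U_\Q$ is precise by construction, as the limit of precise spaces obtained via Lemma~\ref{precise} and Proposition~\ref{thm:Flim_transfer}.

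For the isolated-points condition, I would show that $\U$ has no isolated points at all, so the condition holds vacuously. Given $x \in \U$ and $\eps \in \Q^{>0}$, pick a rational $0 < r < \eps$. Then $\set{x}$ with distance set $\set{0, r}$ is a substructure of $\U$, and as above the extension property produces $y \in \U$ with $d(x, y) = r < \eps$. Thus $B_\eps(x) \neq \set{x}$.

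I expect no real obstacle here. The only point needing care is the extension-property argument for the sort $D$: the distance $r$ must already belong to the domain substructure, so that the embedding fixes it. This is exactly why I take $\set{x}$ with distance set $\set{0, r}$ rather than $\set{0}$. With the three hypotheses verified, Proposition~\ref{prop: beta}(4) gives the statement directly.
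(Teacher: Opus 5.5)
Your proposal is correct and matches the paper's proof, which also obtains the statement directly from Proposition~\ref{prop: beta}(4) after observing preciseness and the absence of isolated points. The paper phrases these for $\clo{\U}$ (the space that item (3) is applied to inside the proof of (4)), while you check them for $\U$ itself, which is exactly the hypothesis as stated. Your extension-property argument is sound, including putting the distance $r$ into the base substructure.
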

\begin{proof}
    Since $\clo{\U}$ is precise and has no isolated points, the claim follows directly from Proposition~\ref{prop: beta}~(4).
\end{proof}

\begin{prop} \label{thm:ball_Fraisse_limit}
    $\Ball_\fin$ is Fraïssé with limit $\mathcal{B}_\U$.
\end{prop}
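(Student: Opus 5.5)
The plan is to apply the transfer principle Proposition~\ref{thm:Flim_transfer}(1) to the functor $\Bee\maps \Ult \to \Ball$ (Proposition~\ref{prop:thebee}). By Theorem~\ref{thm:U_Fraisse}, $\Ult_\fin$ is Fraïssé with limit $\U$. It then suffices to check three things: $\Bee$ restricts to $\Bee_\fin\maps \Ult_\fin \to \Ball_\fin$; $\Bee$ is $\sig$-continuous; and $\Bee_\fin$ is essentially wide and absorbing. The restriction is immediate, because a finite space has finitely many adjacent balls and $\Lev(\Bee_X) = D_X$ is finite. Essential wideness is exactly Lemma~\ref{thm:B_essentially_wide}.

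For $\sig$-continuity, I would take a chain $X_0 \leq X_1 \leq \cdots$ with union $X_\infty$ and check two things. First, every adjacent ball $B(x,y)$ of $X_\infty$ with $x,y \in X_n$ equals $\Bee_{f^\infty_n}(B_{X_n}(x,y))$. Second, every level $r \in D_{X_\infty}$ lies in some $D_{X_n}$. Together these show that $\Bee_{X_\infty}$ is the union of the images of the $\Bee_{X_n}$ in both sorts. Since the maps $\Bee_{f_n}$ are $\Ball$-embeddings, the union carries the induced order, adjacency and level structure, so it is the colimit.

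Absorption is the main step. Let $X \in \Ult_\fin$ and let $g\maps \Bee_X \to T$ be a $\Ball_\fin$-map. By Lemma~\ref{thm:B_essentially_wide} there is $Y \in \Ult_\fin$ and an isomorphism $h\maps T \to \Bee_Y$. I would then use Lemma~\ref{thm:B_star_surjective} to lift $h \cmp g\maps \Bee_X \to \Bee_Y$ to a dc-embedding $f\maps X \to Y$ with $\Bee_f = h\cmp g$, which is exactly the absorbing condition (with the $\D$-map $h$). The exceptional case is $X$ a single point and $Y$ empty, and I expect it to be the only real obstacle. Here $\Bee_X = \emptyset$, so I would enlarge $T$ before realizing it. One option is to add to $T$ a fresh root-level adjacent pair $t \sim t'$ at a level that is new and maximal in $\Lev(T)$, below everything. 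This is again a finite leveled adjacency tree, and $T$ embeds into it by Lemma~\ref{thm:adjacency_preserving}. The enlarged tree is realized by a nonempty $Y$, and then any point map $X \to Y$ together with an $\Lin^0$-map $D_X \to D_Y$ extending $\Lev(g)$ works. When $X$ is empty with $\Bee_X = \emptyset$, the lift is trivial. One must also check that $\Lev(g)$ on levels not attained in $\Bee_X$ is respected. Lemma~\ref{thm:B_star_surjective} sets $D_f = \Lev(g)$ directly, so this is automatic.

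Given these checks, Proposition~\ref{thm:Flim_transfer}(1) yields that $\Ball_\fin$ is Fraïssé with limit $\Bee_\U$. It will be worth confirming as a side remark that $\Ball$ is $\sig$-complete as a category of structures, which is clear since unions of chains of leveled adjacency trees are again leveled adjacency trees: both axioms concern finitely many nodes.
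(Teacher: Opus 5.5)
Your proposal is correct and matches the paper's proof. Like the paper, you apply Proposition~\ref{thm:Flim_transfer}(1) to $\Bee$, check $\sigma$-continuity via adjacent balls $B(x,y)$ with $x,y$ in some $X_n$, get essential wideness from Lemma~\ref{thm:B_essentially_wide}, and prove absorption by realizing (an enlargement of) $T$ as some $\Bee_Y$ and lifting with Lemma~\ref{thm:B_star_surjective}; the only cosmetic difference is that the paper always first embeds $T$ into some $T'$ with at least one node, whereas you do this only in the exceptional case.
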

\begin{proof}
    We apply the transfer principle Proposition~\ref{thm:Flim_transfer}(1) to the ball functor $\Bee\maps \Ult \to \Ball$.
    The functor $\Bee$ is $\sig$-continuous as for every increasing chain $(X_n, f_n)$ with the union $X_\infty$ and an adjacent ball $B_{X_\infty}(x, y) \in \Bee_{X_\infty}$, there is $n \in \N$ such that $x, y \in X_n$, and so $B_{X_\infty}(x, y) = \Bee_{f^\infty_n}(B_{X_n}(x, y))$.
    By Lemma~\ref{thm:B_essentially_wide}, $\Bee_\fin$ is essentially wide.
    To show that $\Bee_\fin$ is absorbing let $X \in \Ult_\fin$ and $g\maps \Bee_X \to T$ be an $\Ball$-embedding.
    We take any $\Ball$-embedding $g'\maps T \to T'$ such that $T'$ has at least one node.
    By Lemma~\ref{thm:B_essentially_wide} there is $Y \in \Ult_\fin$ and a $\Ball$-isomorphism $i\maps T' \to \Bee_Y$.
    Finally, by Lemma~\ref{thm:B_star_surjective} there is a dc-embedding $f\maps X \to Y$ with $\Bee_f = i \cmp g' \cmp g$.
\end{proof}

\subsection{Concrete representations and completions}\label{section:concrete}

In this section we revisit a concrete representation of the rational Urysohn ultrametric space $\U$ and of its Cauchy completion $\clo{\U}$, and we describe a homogeneous spherical completion of $\U$ and its connection to valued fields.
Moreover, we use this concrete representation of $\U$ to obtain a continuous section to the canonical projection $\pi\maps \Aut(\U) \to \Aut(\Qpos)$, which we use in Section~\ref{sec:short_exact_sequence} to represent $\Aut(\U)$ as a semidirect product $\Iso(\U) \rtimes \Aut(\Qpos)$.

Let $S \ni 0$ be a set that will eventually represent splitting options in a tree.
In particular, it is natural to consider $S = \N$ or $S = \Q$.
By a support of a map $f\maps X \to S$ we mean the set $\supp(f) = \set{x \in X: f(x) \neq 0}$.

We consider the spaces
\begin{align*}
    V_S &= \set{f\maps \Q^{>0} \to S \mid \supp(f)\text{ is finite}}, \\
    \clo{V_S} &= \set{f\maps \Q^{>0} \to S \mid \supp(f)\text{ is finite or a decreasing sequence with limit }0}, \\
    \widehat{V_S} &= \set{f\maps \Q^{>0} \to S \mid \supp(f)\text{ is finite or a decreasing sequence}}, \\
    \clo{\clo{V_S}} &= \set{f\maps \Q^{>0} \to S \mid \supp(f)\text{ is a decreasing (countable) ordinal}}
\end{align*}
with the distance set $\Qpos$ and with the ultrametric
\[\textstyle
    d(f, g) = \max_{\Qpos}\set{r \in \Q^{>0}: f(r) \neq g(r)},
\]
i.e. as $r$ decreases, $f$ and $g$ agree up to $r = d(f, g)$, where the functions split.

If $S$ is linearly ordered, we define a convex order by:
\[
    f \prec g \iff f(r) < g(r) \quad\text{ for } r = d(f, g),
\]
and we denote the corresponding expansions by $V_S^\prec$, etc.

We also define the trees of codes of balls:
\begin{align*}
    T_S &= \set{t\maps [r_t, \infty)_{\Q} \to S \mid r_t \in \Q^{>0}}, \\
    \bar{T}_S &= \set{t\maps (r_t, \infty)_{\Q} \to S \mid r_t \in \Q^{>0}}.
\end{align*}
It is easy to see that for every space $X \in \set{V_S, \clo{V_S}, \widehat{V_S}, \clo{\clo{V_S}}}$, $f \in X$, and $r \in \Q^{>0}$ we have
\begin{align*}
    B_r(f) &= \set{g \in X: g \sups t} & &\text{for $t = f\restr{[r, \infty)} \in T_S$}, \\
    \bar{B}_r(f) &= \set{g \in X: g \sups t}  = \textstyle\bigcup_{s \in S} B_r(f \cup \set{r \mapsto s}) & &\text{for $t = f\restr{(r, \infty)} \in \bar{T}_S$},
\end{align*}
so elements of $T_S$ and $\bar{T}_S$ indeed act as codes of balls in all the spaces considered.

\begin{uwgi} \label{rmk:representations}
    Various versions of the above representation appear in literature, though mostly in isometric context.
    One can be more general and replace $\Qpos$ with an arbitrary distance set $E \in \Lin^0$ and obtain spaces $V_{E, S}$, $\clo{V_{E, S}}$, $\widehat{V_{E, S}}$, and $\clo{\clo{V_{E, S}}}$.
    The spaces $V_{E, \Q}$ and $\clo{V_{E, \Q}}$ for countable $0 \in E \subseteq \Rpos$ appear in Nguyen Van Thé~\cite[Sections 1.3.2 and 1.4.2]{Nmem}, along with more references.
    Spaces $V_{E,S}$ with an abstract set of distances were already studied from a model-theoretic point of view by Delon~\cite{Delon84}.
    See also Delhomme--Laflamme--Pouzet--Sauer~\cite{DLPS} for more on spaces $\clo{\clo{V_{E, S}}}$ for $E \subseteq \Rpos$ and their iso-homogeneity.

    The universal spaces from the introduction also fit this representation.
    The generalized Vestfrid's space $U_E$ of $E$-valued (non-strictly) decreasing sequences converging to zero, where $E \subseteq \Rpos$ and $d((x_n), (y_n)) = \max\set{x_k, y_k}$ for $k = \min\set{n \in \N: x_n \neq y_n}$, is isometric to $\clo{V_{E, \N}}$:
    Every such sequence $(x_n) \in E^\N$ corresponds to the map $f\maps E^{>0} \to \N$ defined by $f(r) = \card{\set{n \in \N: x_n = r}}$.
    Vice versa, for $f \in \clo{V_{E, \N}}$ we consider the sequence $(x_n)$ that is the decreasing enumeration of $\supp(f)$ where every value $r \in \supp(f)$ is repeated $f(r)$-many times (and where infinitely many zeros are added to the end if $\supp(f)$ is finite).
    It can be checked that the specified correspondence is indeed an isometry between $U_E$ and $\clo{V_{E, \N}}$.

    As for the Lemins' spaces, we have $\clo{\clo{V_{\Qpos, \kappa}}} \subseteq L_\kappa$.
    The subtlety is that the distance formula for $L_\kappa$ uses supremum instead of maximum: $d(f, g) = \sup_{\Rpos}\set{r \in \Q^{>0}: f(r) \neq g(r)}$.
    So instead of requiring the functions $f\maps \Q^{>0} \to \kappa$ to have co-well-ordered support, it is enough that they are eventually zero.
    On the other hand, this means that the distance set extends to $\Rpos$, and also that $\kappa$ does not any more correspond to the splittings in the tree representation.
    In fact, the splitting at $t\maps (r_t, \infty) \to \kappa$ corresponds to $\set{e\maps (0, r_t]_\Q \to \kappa}/{\sim}$, where $\sim$ denotes the equivalence by eventual agreement.
    In particular, $\clo{\clo{V_{\Qpos, \N}}} \subseteq L_\N \hookrightarrow \clo{\clo{V_{\Rpos, \mathfrak{c}}}}$.
\end{uwgi}

\begin{prop} \label{thm:section}
    For every space $X \in \set{V_S, \clo{V_S}, \widehat{V_S}, \clo{\clo{V_S}}}$ the canonical projection $\pi_X\maps \Aut(X) \to \Aut(\Qpos)$ admits a homomorphism $s_X\maps \Aut(\Qpos) \to \Aut(X)$ such that $\pi_X \cmp s_X = \id{\Aut(\Qpos)}$.
Moreover, $s_{X}\maps \Aut(\Qpos) \to \Aut(X)$ is continuous for $X=V_S$, and $s_{X}\maps \Aut(\Qpos) \to \AutM(X)$ is continuous for $X=\clo{V_S}$. 
\end{prop}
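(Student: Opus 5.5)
The section will act by precomposition. For $\phi \in \Aut(\Qpos)$ (which fixes $0$ and restricts to an order automorphism of $\Q^{>0}$) and $f \in X$, I set
\[
    s_X(\phi)(f) = f \cmp \phi^{-1}\restr{\Q^{>0}},
\]
with distance part $D_{s_X(\phi)} = \phi$.

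\textbf{Well-definedness.} First I check that $f \cmp \phi^{-1}$ lies in the same space $X$. Its support is $\phi[\supp(f)]$. Since $\phi$ is an order automorphism, it preserves each defining condition: finiteness, being a decreasing sequence, and being order-isomorphic to a reverse countable ordinal. It also preserves convergence to $0$. The reason is that $\phi$ is an order automorphism of $\Qpos$ fixing $0$, so a decreasing sequence in $\Q^{>0}$ with infimum $0$ is mapped to one with infimum $0$, because $\phi$ preserves infima that exist.

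\textbf{Dc-automorphism and section property.} Next I verify the distance identity. For $f \neq g$ we have $f \cmp \phi^{-1}(r) \neq g \cmp \phi^{-1}(r)$ iff $\phi^{-1}(r) \in \set{q : f(q) \neq g(q)}$. Hence the set where the images disagree is $\phi$ of the set where $f, g$ disagree. The maximum therefore transforms by $\phi$, giving $d(s_X(\phi)f, s_X(\phi)g) = \phi(d(f,g))$. Each $s_X(\phi)$ is a bijection with inverse $s_X(\phi^{-1})$, so it is a dc-automorphism. Moreover $f \cmp (\psi\phi)^{-1} = (f \cmp \phi^{-1}) \cmp \psi^{-1}$, so $s_X$ is a homomorphism, and $\pi_X \cmp s_X = \id{}$ holds by construction.

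\textbf{Continuity for $V_S$.} Take a basic neighbourhood $N_A(s(\phi))$ with $A$ a finite subspace. It only constrains finitely many points $f$, each of finite support, together with finitely many distances in $D_A$. Let $F \subs \Q^{>0}$ be the finite union of the supports of points of $A$ together with $D_A \setminus \set{0}$. If $\psi$ agrees with $\phi$ on $F$, then $\psi^{-1}$ and $\phi^{-1}$ agree on $\phi[F]$. Off this set both $f \cmp \psi^{-1}$ and $f \cmp \phi^{-1}$ vanish, since $\psi^{-1}(r) \in \supp f$ forces $r \in \psi[\supp f] = \phi[\supp f]$. Hence $s(\psi) \in N_A(s(\phi))$.

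\textbf{Continuity for $\clo{V_S}$ into $\AutM$, the main obstacle.} Here supports may be infinite, so agreement on a finite set cannot pin down $f \cmp \psi^{-1}$ exactly. The metric topology, however, only asks for $d(f\cmp\psi^{-1}, f\cmp\phi^{-1}) < q$ for $f \in A$ and a given $q > 0$. I will choose $\eps > 0$ with $\phi(\eps) < q$. I then require $\psi = \phi$ on the finite set $F = \bigcup_{f \in A} (\supp(f) \cap [\eps, \infty)) \cup \set{\eps}$.

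Now suppose the two images differ at some $r \geq q$. Since $\psi(\eps) = \phi(\eps) < q \leq r$, both $\psi^{-1}(r)$ and $\phi^{-1}(r)$ exceed $\eps$. If either one lies in $\supp f$, then $r \in \phi[\supp f \cap (\eps,\infty)]$ (using $\psi = \phi$ on that finite set), and then both preimages coincide, a contradiction. So both values are $0$. Hence the images agree on $[q, \infty)$, which gives distance $< q$.
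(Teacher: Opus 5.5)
Your proof is correct and follows the paper's argument: the section is precomposition $f \mapsto f \cmp \phi^{-1}$ with distance part $\phi$, and continuity comes from pointwise fixing a finite set made of the supports above a threshold together with the threshold itself. The only differences are minor. You check continuity at an arbitrary $\phi$ rather than only at the identity, and for $V_S$ you explicitly include $D_A \setminus \set{0}$ in the fixed set, a detail the paper glosses over.
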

\begin{proof}
    We define $s_X(h)(f) = \seq{f \cmp h^{-1},h}$.
    Clearly, if $h\in \Aut(\Qpos)$ and $f\in X$, then $f \cmp h^{-1}\in X$, and $s_X(h)$ is a bijection.
    Moreover for every $f, g \in X$ we have,
    \begin{align*}
        d(s_X(h)(f), s_X(h)(g)) &= \max\set{x\colon f(h^{-1}(x))\neq g(h^{-1}(x))} \\ &= h(\max\set{x\colon f(x) \neq g(x)}) = h(d(f, g)),
    \end{align*}
    i.e. $s_X(h)$ is a dc-embedding with $D_{s_X(h)} = h$, which concludes that $s_X(h) \in \Aut(X)$.
    Furthermore, $s_X$ is a homomorphism, and we have $\pi_X \cmp s_X = \id{\Aut(\Qpos)}$.
     
    We now show that $s_X$, for $X=\clo{V_S}$, is continuous.
    Fix $F = \set{f_1,\ldots, f_n} \subs X$ and $q\in \Q^{>0}$, and consider the corresponding open neighbourhood of the identity 
     \[ U= N_{F, q}(\id{X}) = \{g\in \Aut(X)\colon d(g(f_i),f_i)< q,\ i=1,\ldots, n\}\]
    in $\Aut(X)$. Let 
    \[P=\{a\in \Q\colon a>q \text{ and for some } i,\ f_i(a)\neq 0\}\cup\{q\},
    \]
    and let $W = \Aut(\Qpos)_P$ be the pointwise stabilizer of $P$.
    Clearly $P$ is finite and hence $W$ is an open neighbourhood of the identity in $\Aut(\Qpos)$. Moreover, as $q\in P$, for any $h\in W$ we have $h([q,\infty))=[q,\infty)$. This implies that for any $i=1,\ldots, n$ and $h\in  W$, $(f_i\circ h^{-1})\restriction_{[q,\infty)}=f_i\restriction_{ [q,\infty)}$, and so
    $s_X[W]\subseteq U$, which finishes the proof of continuity of $s_X$.
    
    For $X=V_S$ the proof of continuity of $s_X$ is essentially the same with $ U=N_{F}(\id{X}) = \{g\in \Aut(X)\colon d(g(f_i),f_i)=0,\ i=1,\ldots, n\}$
    and $q=0$ in all other places. 
\end{proof}

\begin{lm} \label{thm:dc_from_iso}
    Let $X$ be an iso-homogeneous ultrametric space with $D_X = \Qpos$.
    If the canonical projection $\pi\maps \Aut(X) \to \Aut(\Qpos)$ is surjective, then $X$ is dc-homogeneous.
\end{lm}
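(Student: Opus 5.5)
We will verify dc-homogeneity through the second formulation in Definition~\ref{def:Fraisse}. Take finite $A \in \Ult_\fin$ and two dc-embeddings $f, g\maps A \to X$; we must find $h \in \Aut(X)$ with $h \cmp g = f$. Since $D_X = \Qpos$, the distance parts are order embeddings $D_f, D_g\maps D_A \to \Qpos$ preserving $0$. The idea is to first correct the distance part using the surjectivity of $\pi$, and then finish with iso-homogeneity.

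\emph{Step 1 (distances).} The map $D_f \cmp D_g^{-1}$ is a finite partial isomorphism of $\Qpos$ between $D_g[D_A]$ and $D_f[D_A]$, and it fixes $0$. By the homogeneity of $\Qpos$ as the Fraïssé limit of $\Lin^0_\fin$, it extends to some $\phi \in \Aut(\Qpos)$ with $\phi \cmp D_g = D_f$. By surjectivity of the canonical projection we choose $k \in \Aut(X)$ with $D_k = \pi(k) = \phi$. Then $k \cmp g\maps A \to X$ is a dc-embedding with $D_{k \cmp g} = \phi \cmp D_g = D_f$.

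\emph{Step 2 (points).} Consider the finite subspaces $A_1 = k[g[A]]$ and $A_2 = f[A]$ of $X$, each carrying the distance set induced from $X$. The bijection $p = f \cmp (k \cmp g)^{-1}\maps A_1 \to A_2$ satisfies
\[
d(p(k g(a)), p(k g(a'))) = D_f(d(a,a')) = d(k g(a), k g(a'))
\]
for all $a, a' \in A$, so $p$ is an isometry between finite subspaces of $X$. Viewing these subspaces in $\UltIso^{\Qpos}_\fin$ with distance parts given by the inclusions, iso-homogeneity yields $j \in \Iso(X)$ extending $p$. Then $h = j \cmp k \in \Aut(X)$ satisfies $h \cmp g = f$ on points. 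On distances we have $D_h \cmp D_g = \id{} \cmp \phi \cmp D_g = D_f$, so $h \cmp g = f$ as dc-embeddings, including on the unattained distances of $D_A$.

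The main point requiring care is the bookkeeping of the distance sort. $A$ may have distances not attained by points, so homogeneity must also match $D_g$ with $D_f$ on those values. That is exactly why Step 1 works at the level of $D_A$ via $\phi$ and not merely on attained distances. Similarly, in Step 2 one should note that the isometric setting is applied with the substructure distance sets taken to be the attained ones (or any finite subset of $\Qpos$ containing them, treated identically on both sides). With that set up, the remaining verifications are routine.
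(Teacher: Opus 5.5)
Your proof is correct and is essentially the paper's argument. In both, you extend the distance part to an element of $\Aut(\Qpos)$, lift it to a dc-automorphism via surjectivity of $\pi$, and then close the gap with an isometry from iso-homogeneity. The only difference is bookkeeping: the paper starts from a partial dc-automorphism $p$, post-composes with a lift of $h^{-1}$ and inverts at the end, whereas you start from two embeddings and first correct $g$ by a lift $k$ of $\phi$.
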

\begin{proof}
    Let $p\colon A\to B$ be a finite partial dc-automorphism  of $X$. Take any $h\in \aut(\qplus)$ extending $D_p$ and let $f \in \Aut(X)$ be such that $D_f = h^{-1}$.
    Let $q\maps B \to f[B]$ be the restriction of $f$.
    Note that $q \cmp p$ is an isometry $A \to f[B]$, and so it can be extended to some $g\in\Iso(\U)$.
    Then $f^{-1} \cmp g$ is a dc-automorphism extending~$p$.
\end{proof}

Besides the Cauchy completion we shall also consider spherical completions.
Recall that an ultrametric space $X$ is said to be {\it spherically
complete}~\cite[Definition~5.1]{KirkShahzad} if every chain of balls in $X$ has nonempty intersection.
An ultrametric space $Y$ is said to be an {\it immediate extension}~\cite[page~34]{KirkShahzad} of an ultrametric space $X$ if $X\subseteq Y$ and if for each $y\in Y$ and $x\in X$ with $y\neq x$ there exists $x'\in X$ such that $d(x',y)< d(x,y)$.

It is known that an ultrametric space is spherically complete if and only if it has no proper immediate extension and that the relation of being an immediate extension is transitive (see \cite{Schorner} or \cite[7.3, 7.9]{PCR2}), hence a spherically complete immediate extension is the same thing as a maximal immediate extension and is called a \emph{spherical completion}.
Unlike the Cauchy completion, a spherical completion is not unique in general.

\medskip
We say that a ultrametric space $X$ is \emph{point iso-homogeneous} or \emph{point dc-homogeneous} if for every $x, y \in X$ there is $f \in \Iso(X)$ or $\Aut(X)$, respectively, such that $f(x) = y$.
The following is known for real-valued ultrametric spaces, but the proof does not use any properties of the linearly ordered distance set.
\begin{tw}[{\cite[Corollary 2]{DLPS}}] \label{thm:point_iso-homogeneous}
    An ultrametric space $X$ is iso-homogeneous if and only if it is point iso-homogeneous.
\end{tw}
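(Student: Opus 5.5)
The forward implication is immediate: singletons are finite subspaces, so any isometry $\set{x} \to \set{y}$ extends to some $f \in \Iso(X)$. For the converse, assume $X$ is point iso-homogeneous and let $p\maps A \to B$ be an isometry between finite subspaces. I will extend $p$ to an element of $\Iso(X)$ by induction on $\card{A}$. The induction hypothesis will be formulated for \emph{all} point iso-homogeneous ultrametric spaces simultaneously, since the inductive step passes to a ball of $X$. The case $\card{A} \le 1$ is exactly point iso-homogeneity.

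\textbf{Balls inherit point iso-homogeneity.} Let $B = B_r(x)$ be an open ball and let $y, z \in B$. Take $g \in \Iso(X)$ with $g(y) = z$. Since $g$ is an isometry and every point of a ball is a center, $g[B_r(y)] = B_r(z)$, i.e. $g[B] = B$. Thus $g\restr{B} \in \Iso(B)$, so $B$ (with the same distance set) is point iso-homogeneous.

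\textbf{Inductive step.} Suppose $\card{A} \ge 2$ and let $r = \diam(A) > 0$. Fix $a \in A$, put $C = \bar B_r(a)$ and $C' = \bar B_r(p(a))$; then $A \subs C$ and $B \subs C'$. By point iso-homogeneity choose $g \in \Iso(X)$ with $g(a) = p(a)$. Then $g[C] = C'$, and $g$ maps $X \setminus C$ onto $X \setminus C'$.

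The set $C$ is partitioned into open balls of radius $r$, and $A$ meets $k \ge 2$ of them, say $B_1, \dots, B_k$, in the nonempty sets $A_i = A \cap B_i$. Because $p$ preserves distances, the sets $p[A_i]$ lie in distinct open $r$-balls $B'_1, \dots, B'_k$ of $C'$. Each $A_i$ has fewer points than $A$.

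For each $i$, pick $a_i \in A_i$ and $g_i \in \Iso(X)$ with $g_i(a_i) = p(a_i)$; then $g_i[B_i] = B'_i$. The map $g_i^{-1} \cmp p\restr{A_i}$ is a partial isometry of the ball $B_i$, which is point iso-homogeneous by the previous paragraph. By the induction hypothesis it extends to some $h_i \in \Iso(B_i)$. Set $\varphi_i = g_i \cmp h_i \maps B_i \to B'_i$, a surjective isometry extending $p\restr{A_i}$.

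The map $g$ induces a bijection between the open $r$-balls of $C$ and those of $C'$, so these two families have the same cardinality. Removing the $k$ balls $B_i$ and the $k$ balls $B'_i$ leaves families of equal cardinality (finite minus $k$, or infinite). Fix a bijection between the remaining balls. For each matched pair $D \mapsto D'$, choose an isometry of $X$ sending a point of $D$ to a point of $D'$; its restriction maps $D$ onto $D'$.

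\textbf{Gluing and the verification.} Define $f$ to be $g$ on $X \setminus C$, $\varphi_i$ on $B_i$, and the chosen isometries on the remaining $r$-balls of $C$. Then $f$ is a bijection $X \to X$ extending $p$, and it remains to check distances in three cases.
\begin{itemize}
    \item Two points in the same $r$-ball: distances are preserved since $f$ is an isometry there.
    \item Two points in distinct $r$-balls of $C$: the distance is $r$, and the images lie in distinct $r$-balls of $C'$, so the distance is again $r$.
    \item $x \in C$ and $y \notin C$: then $d(x, y) = d(a, y) > r$. Since $f(x) \in C'$ and $g(y) \notin C'$,
    \[
        d(f(x), g(y)) = d(p(a), g(y)) = d(g(a), g(y)) = d(a, y).
    \]
\end{itemize}
Hence $f \in \Iso(X)$ extends $p$.

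\textbf{Expected obstacle.} I expect the only delicate step to be setting up the induction so that it applies inside the ball $B_i$. This is why the hypothesis is stated over all point iso-homogeneous spaces, and why balls inheriting that property is proved first. The rest is routine ultrametric bookkeeping, and no property of the linear order $D_X$ is used.
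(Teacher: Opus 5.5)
Your proof is correct, and it takes a different route from the paper only because the paper gives no argument here. It quotes \cite[Corollary~2]{DLPS}, which is stated for real-valued ultrametric spaces, and adds only the remark that the proof there does not use any property of the distance set.

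Your argument is a self-contained verification of exactly that remark. You induct on $\card{A}$ over all point iso-homogeneous spaces at once, which is what lets the induction apply inside a ball. The inductive step has four parts:
\begin{itemize}
\item pass to the closed ball $C=\bar{B}_r(a)$ with $r=\diam(A)$;
\item split $C$ into open $r$-balls and extend $p\restr{A_i}$ inside each $B_i$, which is point iso-homogeneous because $g[B_r(y)]=B_r(g(y))$;
\item match the remaining $r$-balls of $C$ and $C'$ via isometries of $X$;
\item use $g$ outside $C$.
\end{itemize}
Only the ultrametric inequality and the linear order on $D_X$ are used. No countability or separability is needed and there is no back-and-forth; the axiom of choice suffices to select isometries for possibly infinitely many matched balls. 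The citation buys brevity. Your argument removes the need to take the transfer from $\Rpos$-valued to abstract $D_X$-valued spaces on trust.

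One small omission in your verification: the case list should also include two points both outside $C$. This is immediate, since $f=g$ on $X\setminus C$ and $g$ is an isometry.
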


We are ready to summarize properties of our concrete representations.
\begin{prop} \label{thm:concrere}
    Let $S \ni 0$ be a any countable infinite set. 
    \begin{enumerate}
        \item $V_S$ is iso-homogeneous and dc-homogeneous, and it is a concrete representation of~$\U$.
        Moreover, $V_\Q^\prec$ is a concrete representation of the convexly ordered rational Urysohn ultrametric space $\U^\prec$. 
        
        \item $\clo{V_S}$ is the Cauchy completion of $V_S$, and so it is a concrete representation of~$\clo{\U}$.
        Moreover, $\clo{V_S}$ is both iso-homogeneous and dc-homogeneous, but it is not spherically complete.
        
        \item $\widehat{V_S}$ is a spherical completion of $V_S$, but it is not even point dc-homogeneous.
        
        \item $\clo{\clo{V_S}}$ is a spherical completion of $V_S$ that is both iso-homogeneous and dc-homogeneous.
    \end{enumerate}
\end{prop}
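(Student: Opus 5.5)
The main tool will be Lemma~\ref{thm:dc_from_iso}: for each of the spaces $V_S$, $\clo{V_S}$ and $\clo{\clo{V_S}}$, the section $s_X$ of Proposition~\ref{thm:section} makes $\pi_X$ surjective. So dc-homogeneity reduces to iso-homogeneity. By Theorem~\ref{thm:point_iso-homogeneous}, iso-homogeneity in turn reduces to point iso-homogeneity. That last property is witnessed by translations. I will fix a group structure on $S$ with neutral element $0$ (possible since $S$ is countably infinite; for $S=\Q$ take addition). Then $f\mapsto f+g$ is an isometry of each space, because pointwise addition preserves the support conditions: the support of $f+g$ lies inside $\supp f\cup\supp g$, and a subset of a finite set, of a sequence decreasing to $0$, or of a reverse well-order is again of the same kind. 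Moreover $d(f+g,f'+g)=d(f,f')$. Hence $f\mapsto f-f_1+f_2$ sends $f_1$ to $f_2$.

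\textbf{Item (1).} To identify $V_S$ with $\U$, I will show that $V_S$ is iso-homogeneous with $D=\Qpos$ and has the one-point extension property for $\UltIso^{\Qpos}_\fin$. Let $A\subs V_S$ be finite and let $A\cup\{a\}$ be a one-point extension. Take $x_0\in A$ minimizing $r=d(a,x_0)$; we may assume $r>0$. Define $f$ to agree with $x_0$ on $(r,\infty)$, to take at $r$ a value of $S$ different from $x(r)$ for all $x\in A$ with $x\restr{(r,\infty)}=x_0\restr{(r,\infty)}$ (possible since $S$ is infinite), and to be $0$ below $r$. Checking $d(f,x)=d(a,x)$ uses the minimality of $r$ and the ultrametric case analysis. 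Then Theorem~\ref{thm:Flim} gives $V_S\cong\U$. For the ordered version $V_\Q^\prec$ the same construction works, except that the new value at $r$ is chosen in $\Q$ strictly between the prescribed neighbours in the relevant adjacency class. This is possible because $\Q$ is dense without endpoints.

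\textbf{Item (2).} First, $V_S$ is dense in $\clo{V_S}$: truncate $f$ below $q$. Second, $\clo{V_S}$ is complete: a Cauchy chain of balls determines codes $t\in T_S$ whose union is a function with support decreasing to $0$. For failure of spherical completeness, take the chain $B_{r_n}(f_n)$ with $r_n\downarrow 1$, where $f_n$ has support $\{r_1,\dots,r_{n-1}\}$ with nonzero values. Its intersection would require support $\{r_n\}$, which does not converge to $0$, so the intersection is empty.

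\textbf{Items (3) and (4).} I will show that both spaces are immediate extensions of $V_S$: given $y\neq x$ with $x\in V_S$, truncating $y$ below a point just under $d(x,y)$ yields a closer element of $V_S$. For spherical completeness of $\clo{\clo{V_S}}$, a chain of balls has codes that are compatible functions on up-sets. Their union has support a reverse well-order, since it is the union of a chain of end-extending reverse well-orders, so it lies in the intersection. This also covers the case where the radii do not decrease to $0$. For $\widehat{V_S}$ the same argument applies to countable chains once one notes that the union support is a decreasing sequence. Uncountable chains reduce to countable cofinal ones, since $\Q$ is countable. To show that $\widehat{V_S}$ is not point dc-homogeneous, compare $f=0$ with a point $g$ whose support has order type $\omega+1$, for example $\{1+1/n\}\cup\{1\}$. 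Any dc-automorphism maps $0$ to some $f'$. I would try the invariant ``the set $\{d(x,y):y\}$ of distances at $x$'' first, but it is all of $\Qpos$ at every point, so it fails. A better invariant comes from chains of balls around the point. Around $g$ there is a chain of balls with radii decreasing to $1$ whose intersection $\{h:h\restr{(1,\infty)}=g\restr{(1,\infty)}\}$ is empty. Around $0$, every chain of balls containing $0$ has nonempty intersection, since it contains $0$ itself. The precise formulation needs care: I will phrase it as the existence of a point $x$ such that some chain of balls, each at positive distance from $x$ and converging toward $x$, has empty intersection.

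\textbf{Main obstacle.} I expect this non-homogeneity argument in (3) to be the most delicate step, namely choosing a dc-invariant property that genuinely distinguishes the two points.
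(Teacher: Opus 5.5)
Your treatment of (1), (2) and (4) follows the paper's route and is sound: a fresh value at $r$ for the extension property, translations together with Theorem~\ref{thm:point_iso-homogeneous}, the section from Proposition~\ref{thm:section} with Lemma~\ref{thm:dc_from_iso}, truncation, and unions of codes. There is one indexing slip in the non-spherical-completeness example. With open balls $B_{r_n}(f_n)$, the support of $f_n$ must contain $r_n$ (take $\set{r_1,\dots,r_n}$ with compatible values). Otherwise $f_{n+1}(r_n)\neq 0=f_n(r_n)$ and the balls are not nested. Alternatively, use closed balls.

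The genuine gap is the failure of point dc-homogeneity in (3), which you leave unresolved, and the route you sketch cannot work. First, your witness $g$ with support $\set{1+1/n}\cup\set{1}$ is not an element of $\widehat{V_S}$. That support has order type $\omega+1$, so it is not a decreasing sequence: $1$ lies below infinitely many support points. This $g$ lives only in $\clo{\clo{V_S}}$. Second, an invariant of the form ``some chain of balls has empty intersection'' is never witnessed in $\widehat{V_S}$, because you prove in the same item that $\widehat{V_S}$ is spherically complete. Concretely, the balls $\set{h\in\widehat{V_S}: h\restr{[r_n,\infty)}=g\restr{[r_n,\infty)}}$ with $r_n\downarrow 1$ all contain the function that agrees with $g$ on $(1,\infty)$ and vanishes elsewhere. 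So this property distinguishes no two points.

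The missing idea is isolation. Take $g$ with $\supp(g)=\set{1+1/n: n\geq 1}$, so $g\in\widehat{V_S}\setminus\clo{V_S}$. If $0<d(g,h)=r\leq 1$, then $h$ agrees with $g$ on $(r,\infty)$ and $h(r)\neq 0$. Hence $r\in\supp(h)$ lies below infinitely many points of $\supp(h)$, which is impossible in $\widehat{V_S}$; thus $B_1(g)=\set{g}$. On the other hand, $0$ is not isolated, since functions with a single support point at arbitrarily small $r$ approach it. Dc-automorphisms are homeomorphisms by Lemma~\ref{thm:dc-continuity}, so no dc-automorphism sends $0$ to $g$; this is exactly the paper's argument.

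Incidentally, the invariant you discarded also works. Your claim that every distance is attained at every point is false: at this $g$ no distance in $(0,1]$ is attained, whereas at $0$ every distance in $\Qpos$ is. The property ``every distance in $\Qpos$ is attained at $x$'' is preserved by any dc-automorphism $\phi$, because $D_\phi$ is a bijection of $\Qpos$.
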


\begin{proof}

{\bf Concrete representations} in (1) and (2). To show that the countable space $V_S$ is a concrete representation of $\U$ it suffices to check that $V_S$ is universal and homogeneous for $\UltIso^{\Qpos}_\fin$. Both those properties will follow from the extension property. If $A\subseteq V_S$ is finite and $i\colon A\to B=A\cup\{v\}$ is an isometric embedding, let $r=\min\{d(v,a)\colon a\in A\}$, and let $A_0=\{a\in A\colon d(a,v)=r\}$. It is not hard to see that there is $v'\in V_S$ such that $d(v',a)=r$ for all $a\in A_0$. Indeed, take $v'\in V_S$ such that $v'(t)=a(t)$ for any $t>r$ and $a\in A_0$, $v'(t)=0$ for $t<r$, and $v'(r)$ is distinct from all $a(r)$, $a\in A_0$ -- this is possible as $S$ is infinite.  By the ultrametric inequality, we also have $d(v',a)=d(v,a)$ for all $a\in A$.

For $V_{\Qpos}^\prec$ and convexly ordered spaces $A^\prec, B^\prec=A^\prec\cup \{v\}$, we additionally require for $v'(r)$ that $\set{a \in A_0: a(r) < v'(r)} = \set{a \in A_0: a \prec_B v}$. Here we are using the fact that set of values $S = \Qpos$ is a dense linear order.

The fact that $\clo{V_S}$ is a concrete representation of $\clo{\U}$ follows from $\clo{V_S}$ being the Cauchy completion of $V_S$, which is proved below.

\medskip

   {\bf Homogeneity} in (1), (2), (4):
    We endow $S$ with an abelian group operation $+$ such that $0$ becomes the neutral element.
    Then every $X \in \set{V_S, \clo{V_S}, \clo{\clo{V_S}}}$ becomes a subgroup of the product group $S^{\Qpos}$.
    Indeed, if $x,a\in X$, then $x+a\in X$, as any subset of the union of the supports of two functions from $X$ is still an allowed support for a function in $X$. Moreover, for a fixed $a\in X$, the map $x\mapsto x+a$ is an isometry of $X$.
    Therefore, as for any $x,y\in X$ and $a=y-x$, we have $x+a=y$, $X$ is point iso-homogeneous. By Theorem  \ref{thm:point_iso-homogeneous}, $X$ is in fact iso-homogeneous.
    By Proposition~\ref{thm:section}, the canonical projection $\pi_X$ is surjective, and so by Lemma~\ref{thm:dc_from_iso}, $X$ is dc-homogeneous.

\medskip

    {\bf Non-homogeneity} in (3):
    Every point in $\widehat{V_S} \setminus \clo{V_S}$ is isolated, while every point in $\clo{V_S}$ is not isolated.
    Since dc-automorphisms (unlike dc-embeddings) are continuous (Lemma~\ref{thm:dc-continuity}), this show that $\widehat{V_S}$ is not point dc-homogeneous.

    \medskip

{\bf Cauchy completeness} in (2).
    For  $\{B_{r_n}(f_n)\colon r_n\to 0\}$, a chain of open balls in $\clo{V_S}$, where $(r_n)$ is a strictly decreasing sequence, we have $f_{n+1}\restr{[r_n, \infty)}=f_{n}\restr{[r_n, \infty)}$. Therefore
    $\bigcap_n B_{r_n}(f_n) = \{f\}$, where $f\restr{[r_n, \infty)}=f_{n}\restr{[r_n, \infty)}$ for all $n$, and so $f\in\clo{V_S}$, as the support of each $f_{n}\restr{[r_n, \infty)}$ is finite. On the other hand, any $f\in\clo{V_S}$ can be realized as the unique element in the intersection of a maximal chain of open balls in $V_S$. Therefore $\clo{V_S}$ is the Cauchy completion of $V_S$.

    \medskip

 {\bf  Spherical completeness} in (4) and (3): We first show it for $\clo{\clo{V_S}}$. Every chain of open balls in $\clo{\clo{V_S}}$ contains a cofinal subchain $\{B_{r_n}(f_n)\colon r_n\to q\}$, for some non-negative real number $q$, a strictly decreasing sequence $(r_n)$, and $f_n\in \clo{\clo{V_S}}$.
    Then $f_{n+1}\restr{[r_n, \infty)}=f_{n}\restr{[r_n, \infty)}$ for every $n$.
    Take $f$ such that for every $n$, $f\restr{[r_n, \infty)}=f_{n}\restr{[r_n, \infty)}$, and $f(t)=0$ for $t\leq\inf\{r_n\colon n\in\N\}$. Then $f\in \bigcap_n B_{r_n}(f_n)$. Since the support of each $f_n$ is co-well-ordered, the same is true for $f$, and so $f\in\clo{\clo{V_S}}$. This shows the spherical completeness of $\clo{\clo{V_S}}$.
  
   Spherical completeness of $\widehat{V_S}$ is proved essentially as for $\clo{\clo{V_S}}$.
    We have that either $\supp(f\restr{[r_n, \infty)})$ is finite for every $n$, or $S = \supp(f\restr{[r_{n_0}, \infty)})$ is an infinite sequence for some $n_0$ and $\supp(f\restr{[r_n, \infty)}) = S$ for every $n \geq n_0$.
    In both cases, $f \in \widehat{V_S}$.

\medskip
{\bf Non-spherical completeness} in (2):
    If instead we take a chain of open balls in $\clo{V_S}$, $\{B_{r_n}(f_n)\colon r_n\to q\}$, for some $q>0$, a strictly decreasing sequence $(r_n)$, and $f_n\in V_S$ with $f_n(r_n)\neq 0$, then any $g\in \bigcap_n B_{r_n}(f_n)$ will be such that $g(r_n)\neq 0$ for all $n$. As $(r_n)$ does not converge to $0$, $g\notin \clo{V_S}$. This implies that $\clo{V_S}$ is not spherically complete.
\medskip

 {\bf Immediate extensions} in (3) and (4):
    $\clo{\clo{V_S}}$ and so $\widehat{V_S}$ is an immediate extension of $V_S$: For any $f \in V_S$ and $g \in \clo{\clo{V_S}} \setminus V_S$ let $r = d(f, g)$.
    Hence, $f\restr{(r, \infty)} = g\restr{(r, \infty)}$, but $f(r) \neq g(r)$.
    Let $f'(t) = f(t)$ for $t>r$, $f'(r) = g(r)$, and $f'(t) = 0$ for $t < r$.
    Then $f' \in V_S$ and $d(f', g) < r$.
\end{proof}

\subsubsection{Connection to valued fields}

There is a natural connection between $\U$ and valued fields, as can be demonstrated using the concrete representations above.
A discussion of ultrametric spaces in the context of valued fields and their spherical completions is present for example in \cite{PGS} or \cite{Delon84}.

Recall that a \emph{value group} is an abelian group $\seq{\Gamma, +, 0}$ with a linear order $\leq$ that is preserved by addition.
Recall that a \emph{valued field} is a field $K$ together with a value group $\Gamma$ and a \emph{valuation} $v\maps K \to \Gamma \cup \set{\infty}$ ($+$ and $\leq$ are naturally extend to $\infty$) satisfying:
\begin{enumerate}
    \item $v(a) = \infty$ if and only if $a = 0$,
    \item $v(a \cdot b) = v(a) + v(b)$,
    \item $v(a + b) \geq  \min\{v(a), v(b)\}$.
\end{enumerate}
It is possible to re-interpret the value group $\Gamma$ as a distance set $D_\Gamma \in \Lin^0$ by passing to the multiplicative notation and by reversing the order.
We let $D_\Gamma$ be the set of all formal elements $e^{-x}$ for $x \in \Gamma \cup \set{\infty}$ (with $e^{-\infty} = 0$ and $e^{-0} = 1$) endowed with linear order and multiplication defined by 
\[
    e^{-x} \leq e^{-y} \iff x \geq y \text{\qquad and\qquad} e^{-x} \cdot e^{-y} = e^{-(x + y)}.
\]
The corresponding re-interpretation of the valuation $v$ as a norm $\abs{a} = e^{-v(a)}$ gives
\begin{enumerate}[label={(\arabic{*}')}]
    \item $\abs{a} = 0$ if and only if $a = 0$,
    \item $\abs{a \cdot b} = \abs{a} \cdot \abs{b}$,
    \item $\abs{a + b} \leq \max\set{\abs{a}, \abs{b}}$.
\end{enumerate}
Then putting $d(a, b) = \abs{a - b}$ gives a $D_\Gamma$-valued translation invariant ultrametric.
The translation goes both ways, i.e. a translation invariant ultrametric induces a norm satisfying (1') and (3'), and if it satisfies also (2'), it corresponds to a valuation.

In our case the value group $\Gamma$ is $\seq{\Q, 0, +, \leq}$ and the distance set $D_\Gamma$ is $\set{e^{-q}: q \in \Q \cup \set{\infty}} \subs \R$, which is (as a linear order) isomorphic to $\Qpos$.

Let $k$ be a countable infinite field.
Recall that $k[[t^\Q]]$ is the Hahn field consisting of formal power series $a = \sum_{q \in \Q} a_q t^q$ in variable $t$ with coefficients $a_q \in k$ such that $\supp(a) = \set{q \in \Q: a_q \neq 0}$ is well-ordered.
The operations are extending addition and multiplication of polynomials:
\begin{align*}
    \Bigl(\sum_{q \in \Q} a_q t^q\Bigr) + \Bigl(\sum_{q \in \Q} b_q t^q\Bigr) &= \sum_{q \in \Q} (a_q + b_q) t^q, \\
    \Bigl(\sum_{q \in \Q} a_q t^q\Bigr) \cdot \Bigl(\sum_{q' \in \Q} b_{q'} t^{q'}\Bigr) &= \sum_{q, q' \in \Q} a_q b_{q'} t^{q + q'}.
\end{align*}
The multiplication is well-defined because of well-ordered supports.
Inverse elements for multiplication exist and $k[[t^\Q]]$ is indeed a field.
The valuation is defined by 
\[\textstyle
    v(a) = \min_{\Q \cup \set{\infty}}(\supp(a)).
\]

\begin{prop}
    The ultrametric structure of $k[[t^\Q]]$ can be identified with $\clo{\clo{V_k}}$, and therefore $\clo{\clo{V_k}}$ can be endowed with a structure of a valued field.
\end{prop}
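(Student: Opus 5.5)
We will write down an explicit dc-isomorphism between the ultrametric space of the valued field $k[[t^\Q]]$ and $\clo{\clo{V_k}}$; the valued-field structure on $\clo{\clo{V_k}}$ is then transported along it. Here $k$ is viewed as a pointed set with $0 = 0_k$. The ultrametric on $k[[t^\Q]]$ is $d(a,b) = \abs{a-b} = e^{-v(a-b)}$, with values in $D_\Gamma = \set{e^{-q}: q \in \Q \cup \set{\infty}}$. As noted above, $D_\Gamma$ is order-isomorphic to $\Qpos$.

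We fix an order isomorphism $\phi\maps \Q \to \Q^{>0}$, for instance the composition of the order isomorphism $q \mapsto e^{-q}$ from $(\Q, \geq)$ onto $D_\Gamma \setminus \set{0}$ with an order isomorphism $D_\Gamma \to \Qpos$ preserving $0$. Then $\phi$ is decreasing, and extending it by $\phi(\infty) = 0$ gives an $\Lin^0$-isomorphism $\Phi\maps D_\Gamma \to \Qpos$. To $a = \sum_q a_q t^q$ we assign $F(a)\maps \Q^{>0} \to k$ defined by $F(a)(\phi(q)) = a_q$. Since $\phi$ is an order anti-isomorphism and $\supp(a)$ is well-ordered, $\supp(F(a)) = \phi[\supp(a)]$ is a decreasing (co-well-ordered) subset of $\Q^{>0}$. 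Every well-ordered subset of $\Q$ is countable, so $F(a) \in \clo{\clo{V_k}}$. Conversely, every $f \in \clo{\clo{V_k}}$ has support a decreasing countable ordinal, and pulling back via $\phi$ gives a well-ordered support. Hence $F$ is a bijection.

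It remains to check that $F$ is distance-carrying with $D_F = \Phi$. Coefficientwise subtraction shows $\supp(a-b) = \set{q: a_q \neq b_q}$, so
\[
    \Phi(d(a,b)) = \phi(\min\set{q: a_q \neq b_q}) = \max\set{r \in \Q^{>0}: F(a)(r) \neq F(b)(r)} = d(F(a),F(b)).
\]
The maximum exists because the support of the difference is co-well-ordered; this agrees with the definition of the metric on $\clo{\clo{V_S}}$. Thus $\seq{F,\Phi}$ is a dc-isomorphism. Transporting addition, multiplication and the valuation along $F$ and $\Phi$ makes $\clo{\clo{V_k}}$ a valued field whose induced ultrametric is exactly the original one, which proves both parts of the statement.

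The only delicate point is the bookkeeping of order reversal: minima of well-ordered supports in $\Q$ must correspond to maxima of decreasing supports in $\Q^{>0}$, and "decreasing (countable) ordinal" must match "well-ordered subset of $\Q$". Both follow directly from $\phi$ being an anti-isomorphism. Note that the identification is only up to the choice of $\phi$, that is, up to a dc-isomorphism rather than an isometry, which is exactly what the statement requires.
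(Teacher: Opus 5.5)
Your proposal is correct and follows essentially the same route as the paper. Both arguments identify well-ordered supports in $\Q$ with co-well-ordered supports in $\Q^{>0}$ through the order-reversing identification $\Q \cong D_\Q \setminus \{0\} \cong \Q^{>0}$, observe that $d(a,b)$ becomes the maximum of the disagreement set, and transport the valued-field structure along this identification; you simply make the bijection $F$ and the distance isomorphism $\Phi$ explicit.
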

\begin{proof}
    Using the multiplicative reformulation of $k[[t^\Q]]$, we have
    \[\textstyle
        \abs{a} = \max_{D_\Q} (\supp(a))
        \qquad\text{and so}\qquad
        d(a, b) = \max_{D_\Q}\set{e^{-q}: q \in \Q, a_q \neq b_q}.
    \]
    We observe that this ultrametric matches the ultrametric of $\clo{\clo{V_k}}$, because the elements of $k[[t^\Q]]$ are functions $\Q \to k$ with well-ordered support, which can be identified with functions $D_\Q \setminus \set{0} \to k$ with decreasing well-ordered support, and $D_\Q \setminus \set{0} \cong \Q^{>0}$.
    Altogether, $k[[t^\Q]]$ and $\clo{\clo{V_k}}$ have mathching ultrametric structures, and hence the structure of a valued field can be transferred from $k[[t^\Q]]$ to $\clo{\clo{V_k}}$.
\end{proof}

In this view we have the following:

\begin{wn} \label{thm:correspondeces} \hfill
    \begin{enumerate}
        \item $V_k$ corresponds to the valued ring $k[t^\Q] = \set{a \in k[[t^\Q]]: \supp(a) \text{ is finite}}$.
        \item $\clo{V_k}$ corresponds to $\clo{k[t^\Q]} := \set{a \in k[[t^\Q]]: \supp(a)$ is finite or a sequence converging to $\infty}$, which is a subfield, and so $\clo{V_k}$ becomes a valued subfield of~$\clo{\clo{V_k}}$.
        \item $\widehat{V_k}$ is spherically complete, but is not closed under addition and under multiplication.
        \item The field of fractions $k(t^\Q)$ of $k[t^\Q]$ corresponds to a countable space $W_k$ such that $V_k \subs W_k \subs \clo{V_k}$, and hence $W_k$ is isometric to $V_k$.
    \end{enumerate}
\end{wn}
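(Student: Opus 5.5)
We treat the four items in turn. Throughout we use the identification of $k[[t^\Q]]$ with $\clo{\clo{V_k}}$ from the preceding proposition. A series $a = \sum_q a_q t^q$ corresponds to the function $e^{-q} \mapsto a_q$ on $D_\Q \setminus \set{0} \cong \Q^{>0}$. Under this correspondence, a well-ordered support in $\Q$ becomes a co-well-ordered support in $\Q^{>0}$, and "converging to $\infty$ in $\Q$" becomes "converging to $0$ in $\Qpos$".

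For (1), we simply read off the definitions. Elements of $V_k$ are exactly the finitely supported functions, which correspond to the finitely supported series, i.e. to $k[t^\Q]$. This set is closed under sums and products, since $\supp(ab) \subs \supp(a) + \supp(b)$ is finite, and it inherits the valuation.

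For (2), the identification with $\clo{k[t^\Q]}$ is again immediate from the description of $\clo{V_k}$. Closure under addition holds because $\supp(a+b) \subs \supp(a) \cup \supp(b)$, and a finite union of sequences tending to $\infty$ still has this property.

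For multiplication, note that $\supp(ab) \subs \supp(a) + \supp(b)$, and for every bound $N$ only finitely many pairs $(q, q')$ from the two supports satisfy $q + q' \le N$. The reason is that both supports are bounded below and have only finitely many elements below any bound. Hence $\supp(ab)$ is finite or a sequence tending to $\infty$.

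For inverses, write a nonzero $a$ as $a = c t^{q_0}(1 - u)$, where $c \in k^\times$ and $u$ lies in the subring with support tending to $\infty$ and $v(u) > 0$. We then set $(1-u)^{-1} = \sum_n u^n$. This sum converges in $\clo{k[t^\Q]}$, since $v(u^n) \ge n\,v(u) \to \infty$ and $\clo{k[t^\Q]}$ is Cauchy complete by Proposition~\ref{thm:concrere}(2).

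The main obstacle is checking that this limit stays in $\clo{k[t^\Q]}$, i.e. that below any bound $N$ only finitely many coefficients are nonzero. We handle it as follows: only the finitely many $n$ with $n\,v(u) \le N$ contribute below $N$, and each $u^n$ has finite support below $N$. Alternatively, we can observe that $\clo{k[t^\Q]}$ is the valuation-topology completion of $k(t^\Q)$, and the completion of a valued field is a field.

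For (3), spherical completeness is Proposition~\ref{thm:concrere}(3). For non-closure under addition, take $a$ with support a decreasing sequence tending to $1$, and $b$ with support a decreasing sequence tending to $2$, both in $\Q^{>0}$. Their sum has support with two accumulation points, so it is not a single decreasing sequence in the required sense and $a + b \notin \widehat{V_k}$. For multiplication, take $a$ supported on $\set{1 + 1/n}$ and $b = 1 + t^{q}$ for a suitable $q$. Translated to the multiplicative picture, this again produces a support which is a union of two sequences with distinct limits, so the product is not in $\widehat{V_k}$.

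For (4), since $k[t^\Q] \subs \clo{k[t^\Q]}$ and the latter is a field by (2), the fraction field satisfies $k(t^\Q) \subs \clo{k[t^\Q]}$. This gives $V_k \subs W_k \subs \clo{V_k}$. Since $k$ and $\Q$ are countable, $W_k$ is countable.

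To conclude that $W_k$ is isometric to $V_k$, we apply Theorem~\ref{thm:Flim} in $\UltIso^{\Qpos}_\fin$: $V_k$ has the extension property by Proposition~\ref{thm:concrere}(1), and we transfer it to $W_k$. Take a finite $A \subs W_k$ and a one-point extension realized by $v$, and let $r$ and $A_0$ be as in the proof of Proposition~\ref{thm:concrere}(1). We find $v' \in V_k \subs W_k$ by density of $V_k$ in $\clo{V_k}$: choose $w \in V_k$ with $d(w, a_0) < r$ for some $a_0 \in A_0$. We then modify $w$ at $r$ and below it to a value in $k$ avoiding the finitely many values $a(r)$ for $a \in A_0$. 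By the ultrametric inequality this gives the required distances. Hence $W_k$ is homogeneous and universal, and therefore isometric to $V_k \cong \U_\Q$.
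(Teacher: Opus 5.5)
Your proof is correct. For (1)--(3) it follows the paper in substance. Products are handled by $\supp(ab)\subs\supp(a)+\supp(b)$. For inverses, your geometric series $(1-u)^{-1}=\sum_n u^n$, with the count that only the $n$ with $n\,v(u)\le N$ contribute below $N$, is the same observation as the paper's remark that the recursively computed inverse has support inside finite sums of elements of $\supp(a)$.

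That direct count is what actually does the work. Cauchy completeness of $\clo{V_k}$ already puts the metric limit inside $\clo{k[t^\Q]}$, so what remains is identifying that limit with the inverse, which follows from continuity of multiplication. Your ``alternative'' via the completion of $k(t^\Q)$ presupposes that $k[t^\Q]$ is dense in $k(t^\Q)$. That density is the same estimate, so the alternative adds nothing.

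The genuine difference is in (4). The paper invokes Lemma~\ref{thm:countable_below_Cauchy}, which works for any countable $M$ with $\U\subs M\subs\clo{\U}$. It replaces all of $A$ by nearby points $c(a)\in\U$, at a scale below every distance of the extension, and then uses the abstract extension property of $\U$. You instead reprove that lemma inside the concrete model. You approximate a single $a_0\in A_0$ by $w\in V_k$ and then pick the value at $r$ outside $\set{a(r): a\in A_0}$, which uses that $k$ is infinite. Your version is explicit and shows the witness can be taken in $V_k$ itself. The paper's version is model-independent and reusable.

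One point in (3) should be tightened. ``$a$ supported on $\set{1+1/n}$'' only works if read in the distance picture on $\Q^{>0}$. Read as an exponent support in $k[[t^\Q]]$, that set is not well ordered, so $a\notin k[[t^\Q]]$. In exponents you want a support increasing to a limit, for example the paper's $a=\sum_n t^{1-1/n}$. Then for any $q>0$, the exponent support of $(1+t^q)a=a+t^qa$ accumulates at both $1$ and $1+q$; only finitely many terms can cancel, since the shifted copy meets $[0,1)$ in finitely many points. Hence the support is not of order type $\omega$. With $q=1$ this single example also gives non-closure under addition ($a$ and $ta$), as in the paper.
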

\begin{proof}
    Claim~(1) is clear because we restrict functions on both sides to those with finite support.

    Claim~(2): Again the correspondence is clear.
    Also clearly $\clo{k[[t^\Q]]}$ is an additive subgroup since the addition is defined coordinatewise.
    It is closed under multiplication since 
    $\supp(a \cdot b) \subs \bigcup_{q \in \supp(b)} (\supp(a) + q)$, and so $\supp(a \cdot b)$ is finite or a sequence converging to $\infty$ if the same is true for $a$ and $b$.
    
    If $a \in \clo{k[[t^\Q]]}$ is such that $\min(\supp(a)) = 0$ and $a_0 = 1$, i.e. $a = 1 + a_1 t^{q_1} + a_2 t^{q_2} + \cdots$ with $q_1 < q_2 < \cdots$, then the recursive procedure for computing $a^{-1}$ in $k[[t^\Q]]$ shows that $\supp(a^{-1})$ consists of finite sums of elements from $\supp(a)$, and therefore is again finite or a sequence converging to $\infty$.
    Hence $a^{-1} \in \clo{k[t^\Q]}$.
    Altogether, $\clo{V_k} \cong \clo{k[t^\Q]}$ is a valued field.
    
    Claim~(3): The spherical completeness follows from Propositon~\ref{thm:concrere}.
    Now let $K \subs k[[t^\Q]]$ be the subspace corresponding to $\widehat{V_k}$, i.e. it consists of functions whose support is finite or an increasing sequence.
    Clearly if for example $a = \sum_{n \in \N} t^{1 - 1/n}$, then $t \cdot a, (1 + t) \in K$, but $a + t \cdot a = (1 + t) \cdot a \notin K$ as its support is of type $\omega + \omega$.

    Claim~(4): We have $V_k \subs W_k \subs \clo{V_k}$ corresponding $k[t^\Q] \subs k(t^\Q) \subs \clo{k[t^\Q]}$ since $\clo{k[t^\Q]}$ is a field by (2) and $k(t^\Q)$ is the smallest field extension of $k[t^\Q]$.
    We have that $W_k$ is isometric to $V_k$ by Lemma~\ref{thm:countable_below_Cauchy} since $V_k$ is a concrete representation of $\U$ and $\clo{V_k}$ is its Cauchy completion.
\end{proof}

\begin{lm} \label{thm:countable_below_Cauchy}
    Let $M$ be a countable ultrametric space such that $\U \subs M \subs \clo{\U}$, where $\clo{\U}$ is the Cauchy completion of $\U$.
    Then $M$ is isometric to $\U$.
\end{lm}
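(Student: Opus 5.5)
The plan is to show that $M$ is a countable ultrametric space with $D_M = \Qpos$ that is universal and has the isometric extension property for $\UltIso^{\Qpos}_\fin$. Then Theorem~\ref{thm:Flim} (applied to the Fraïssé class $\UltIso^{\Qpos}_\fin$, whose limit is $\U_\Q \cong \U$ by the proof of Theorem~\ref{thm:U_Fraisse}) gives that $M$ is isometric to $\U$. Note that $M \in \sig\UltIso^{\Qpos}_\fin$ trivially, being a countable union of finite subspaces. Moreover $D_M = D_{\clo{\U}} = \Qpos$ by Lemma~\ref{thm:dense_embedding}, since $\U$ is dense in $M$ and precise. Universality is immediate from $\U \subs M$.

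The key step is the extension property. Let $A \subs M$ be finite and let $A \cup \set{v}$ be a one-point isometric extension with distances in $\Qpos$; it suffices to treat one-point extensions. By density of $\U$ in $\clo{\U}$, I will replace $A$ by nearby points of $\U$. Put $\eps = \min(\set{d(a,a') : a \neq a' \in A} \cup \set{d(v,a) : a \in A})>0$, and for each $a \in A$ choose $a^* \in \U$ with $d(a,a^*) < \eps$. By the ultrametric inequality, $a \mapsto a^*$ is an isometry $A \to A^* \subs \U$, and the abstract extension $A^* \cup \set{v}$, with $d(v,a^*) := d(v,a)$, is a valid ultrametric space (it is an isometric copy of $A \cup \set{v}$). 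Since $\U$ has the extension property, there is $w \in \U \subs M$ with $d(w,a^*) = d(v,a)$ for all $a$.

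The point that needs care, and which I expect to be the main obstacle, is transferring back from $A^*$ to $A$. We have $d(w,a^*) = d(v,a) \geq \eps > d(a,a^*)$, so the ultrametric inequality gives $d(w,a) = d(w,a^*) = d(v,a)$. Thus $w$ realizes the extension over $A$ inside $M$, and the extension property holds. One also needs $w \notin A$; this is automatic because $d(w,a) > 0$ for every $a \in A$.

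With universality and the extension property established, Theorem~\ref{thm:Flim}(2)$\Rightarrow$(3) and uniqueness show that $M$ is the Fraïssé limit of $\UltIso^{\Qpos}_\fin$, hence isometric to $\U$. The only subtle points are the choice of $\eps$, which must be at most every distance from $v$ to $A$ so that the approximation does not destroy the small distances $d(v,a)$, and the fact that the metric on $M$ takes values in $\Qpos$, which comes from precision and Lemma~\ref{thm:dense_embedding}.
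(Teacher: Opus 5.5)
Your proof is correct and follows essentially the same route as the paper. You approximate $A$ by points of $\U$ closer than every distance occurring in $A \cup \set{v}$, apply the extension property of $\U$, and transfer the witness back to $A$ via the ultrametric inequality; this gives $M$ the one-point extension property, so $M$ is isometric to $\U$ by uniqueness of the Fra\"{\i}ss\'e limit. (The appeal to Lemma~\ref{thm:dense_embedding} for $D_M = \Qpos$ is not needed, since $\U \subs M \subs \clo{\U}$ already forces every distance in $\Qpos$ to be attained, but it does no harm.)
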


\begin{proof}
    Let $A \subs M$ be finite, and let $A' = A \cup \set{x}$ be a one-point extension (using rational distances).
    Let $r \in \Qpos$ be strictly smaller than every distance used in $A'$.
    For every $a \in A$ we find a point $c(a) \in \U$ with $d(a, c(a)) \leq r$.
    Then $c\maps A \to c[A]$ is an isometry. By the extension property of $\U$ applied to $c[A] \cup \set{x}$, there is $y \in \U$ such that $d_{\U}(c(a), y) = d_{A'}(a, x)$ for every $a \in A$.
    But we also have $d_M(a, y) = d_M(c(a), y)$ since those distances are larger than $r$.
    Hence, the countable space $M$ has the extension property for one point extensions, and so is homogeneous.
\end{proof}

\begin{wn}\label{wn:Urysohn reduct}
    The countable rational Urysohn ultrametric space $\U$ is isometric to the reduct of the valued field $k(t^\Q)$ for any countable infinite field $k$.
\end{wn}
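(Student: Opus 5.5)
The plan is to assemble Corollary~\ref{thm:correspondeces} with Proposition~\ref{thm:concrere} and Lemma~\ref{thm:countable_below_Cauchy}; no new construction is needed.

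Fix a countable infinite field $k$. Since $k$ is a countable infinite set containing $0$, we may apply Proposition~\ref{thm:concrere}(1) with $S = k$. It shows that $V_k$ is a concrete representation of $\U$, so $V_k$ is isometric to $\U$ with distance set $\Qpos$. By Proposition~\ref{thm:concrere}(2), $\clo{V_k}$ is the Cauchy completion of $V_k$, and hence it represents $\clo{\U}$.

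Next, use the identification of the ultrametric structure of $k[[t^\Q]]$ with $\clo{\clo{V_k}}$. By Corollary~\ref{thm:correspondeces}(1),(2), this identification sends $k[t^\Q]$ onto $V_k$ and $\clo{k[t^\Q]}$ onto $\clo{V_k}$. Moreover, $\clo{k[t^\Q]}$ is a subfield of $k[[t^\Q]]$. The field of fractions $k(t^\Q)$ is the smallest subfield containing $k[t^\Q]$, so it sits between them:
\[
    k[t^\Q] \subs k(t^\Q) \subs \clo{k[t^\Q]}.
\]
Under the identification, the ultrametric reduct of $k(t^\Q)$ therefore becomes a space $W_k$ with $V_k \subs W_k \subs \clo{V_k}$. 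Here the metric on $k(t^\Q)$ is $d(a,b) = \abs{a-b}$, with $\abs{\cdot}$ taking values in $D_\Q \cong \Qpos$. The space $W_k$ is countable, because every element of $k(t^\Q)$ is a quotient of two elements of the countable ring $k[t^\Q]$; this ring is countable since it consists of finite-support functions $\Q \to k$ with $k$ countable.

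Finally, transport along the isometry $V_k \cong \U$ and its extension $\clo{V_k} \cong \clo{\U}$ from Observation~\ref{exttoCauchy}. This gives $\U \subs W_k \subs \clo{\U}$ with $W_k$ countable, so Lemma~\ref{thm:countable_below_Cauchy} shows that $W_k$ is isometric to $\U$. Hence the reduct of $k(t^\Q)$ is isometric to $\U$.

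The one point needing care is bookkeeping rather than mathematics. We must check that the valuation-induced distance set $D_\Q$ and its order agree with the $\Qpos$-valued metric on $V_k$ used in Lemma~\ref{thm:countable_below_Cauchy}. This holds because the identification in the preceding proposition is an order-isomorphism $D_\Q \setminus \set{0} \cong \Q^{>0}$. So "isometric" here means isometric after identifying the distance sets; in particular, the reduct is dc-isomorphic to $\U$.
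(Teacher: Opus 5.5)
Your proposal is correct and is essentially the paper's argument. The paper obtains $k(t^\Q) \cong W_k \cong V_k \cong \U$ by combining Corollary~\ref{thm:correspondeces}(4) with Proposition~\ref{thm:concrere}; the proof of that corollary item is exactly your sandwich $V_k \subs W_k \subs \clo{V_k}$ followed by Lemma~\ref{thm:countable_below_Cauchy}. Your explicit checks that $W_k$ is countable and that $D_\Q$ is identified with $\Qpos$ simply spell out what the paper leaves tacit.
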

\begin{proof}
    By Corollary~\ref{thm:correspondeces}~(4) and by Proposition~\ref{thm:concrere} we have $k(t^\Q) \cong W_k \cong V_k \cong \U$.
\end{proof}

\begin{uwgi}
    $k[[t^\Q]]$ is a spherical completion of the valued field $k(t^\Q)$, and if $k$ is of characteristic zero, by Kaplanski~\cite[Theorem~5]{Kap}, the spherical completion is unique.
    Hence, we can view the ultrametric space $\clo{\clo{V}}$ as \emph{the} spherical completion of $V$ (note that $V \cong \U \cong k(t^\Q)$ as ultrametric spaces).
\end{uwgi}

\section{Automorphism groups}\label{Sec:automorphism}

\subsection{\texorpdfstring{Comparing $\Aut(\U)$ with $\Iso(\U)$ and $\Aut(\Qpos)$}
{Comparing Aut(U) with Iso(U) and Aut(Qpos)}}
\label{sec:short_exact_sequence}

In this section we prove  that $\aut(\U)$ is an  extension of $\aut(\qplus)$ by $\Iso(\U)$, and similarly that  $\aut(\U^\prec)$ is an  extension of $\aut(\qplus)$ by $\Iso(\U^\prec)$. Then we conclude that $\aut(\U^\prec)$ is extremely amenable, $\aut(\U)$ is amenable, and we identify the universal minimal flow of $\aut(\U)$.

\begin{df}
Let $G_1, G_2, H$ be topological groups. Let $i\colon G_1\to H$ be a continuous embedding, $\pi\colon H\to G_2$  a continuous and open surjection, 
and assume that the image of $i$ is equal to the kernel of $\pi$.
In that case we say that $H$ is an {\it extension} of $G_2$ by $G_1$. This is the same as saying that
 \[ 1 \xrightarrow{} G_1 \xrightarrow{i} H \xrightarrow{\pi} G_2   \xrightarrow{} 1\]
 is a {\it short exact sequence} of topological groups.
The sequence \emph{splits} if there exists a continuous section $s\maps G_2 \to H$, i.e. a continuous homomorphism such that $\pi \cmp s = \id{G_2}$.
\end{df}

Recall that $\Iso(\U)$ denotes the isometry group, i.e.  $f\in \Iso(\U)$ iff $D_f$
is the identity.
Let $\pi\maps \Aut(\U) \to \Aut(\Qpos)$ be the canonical projection, and let $i$ denote the inclusion
 $\Iso(\U)\subseteq \aut(\U)$. 
Note that $\Iso(\U) \leq \aut(\U)$ is a closed normal subgroup as it is the kernel of $\pi$.
We analogously introduce $\bar{\pi}\maps \AutM(\clo{\U}) \to \Aut(\Qpos)$ and $\bar{i}\maps \IsoM(\clo{\U}) \to \AutM(\clo{\U})$.

For every countable structure $M$ and its finite partial automorphism $p$, we put
\[
    V_p = V^M_p=\set{g \in \Aut(M): g\text{ extends }p}.
\]
We have $V_p = N_A(f)$ for any $f$ extending $p$ and for $A = \dom(p)$, so this is just a notation for a basic open set.
Similarly for an ultrametric space $M$, its precise finite partial automorphism $p$, and $q \in \Q^{>0}$ we put
\[
    V_{p, q} = V^M_{p, q} = \set{g \in \AutM(M): d(g(x), p(x)) < q\text{ for every } x \in \dom(p)},
\]
which is again equal to $N_{A, q}(f)$ for any $f$ extending $p$ and for $A = \dom(p)$.
(Recall the definition of a precise partial-dc automorphism and precise space from Section~\ref{prel:2sort} – $\seq{A, p}$ is precise if every distance of $A$, $\dom(p)$, and $\rng(p)$ is attained in the respective space.)

\begin{prop} \label{basic_image}
    We have \begin{enumerate}
        \item $\pi[V_p] = V_{D_p}$ for every $p$ finite partial automorphism of $\U$,
        \item $\bar{\pi}[V_{p, q}] = V_{D_p}$ for every $p$ precise finite partial automorphism of $\clo{\U}$ and $q \in \Q^{>0}$ such that $q \leq (D_{\dom(p)} \cup D_{\rng(p)}) \setminus \set{0}$.   
    \end{enumerate}
    In particular, the maps $\pi\maps \Aut(\U) \to \Aut(\Qpos)$ and $\bar{\pi}\maps \AutM(\clo{\U}) \to \Aut(\Qpos)$ are open.
\end{prop}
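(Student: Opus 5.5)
The inclusion $\pi[V_p] \subs V_{D_p}$ is immediate, since $\pi(g) = D_g$ extends $D_p$ whenever $g$ extends $p$. For the converse, fix $h \in \Aut(\Qpos)$ extending $D_p$. We need some $g \in \Aut(\U)$ extending $p$ with $D_g = h$. We work in the concrete representation $V_S \cong \U$ from Proposition~\ref{thm:concrere}, so that the section $s = s_{V_S}$ of Proposition~\ref{thm:section} is available. Put $f = s(h)$, so $D_f = h$. Then $f^{-1} \cmp p$ is a finite partial map $\dom(p) \to f^{-1}[\rng(p)]$. Its distance part is $h^{-1} \cmp D_p$, which is the identity on $D_{\dom(p)}$, so $f^{-1}\cmp p$ is a partial isometry. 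By iso-homogeneity of $\U$ (Theorem~\ref{thm:U_Fraisse}) it extends to some $u \in \Iso(\U)$. Then $g = f \cmp u$ extends $p$, and $D_g = h \cmp \id{} = h$. This gives (1). Openness of $\pi$ follows because the sets $V_p$ form a basis, and $\pi$ is a homomorphism, so images of basic open sets around every point are open.

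For (2) the same strategy applies in $\clo{\U} \cong \clo{V_S}$, again using the section $s_{\clo{V_S}}$ and the iso-homogeneity of $\clo{V_S}$ from Proposition~\ref{thm:concrere}(2). First take $g \in V_{p,q}$. Since $p$ is precise, each $r \in D_{\dom(p)}$ is $d(x,y)$ for some $x,y \in \dom(p)$. Moreover $q \le D_p(r)$, which is a positive distance of $\rng(p)$ when $r>0$. So Observation~\ref{thm:distance_neighborhood}, applied to an extension of $p$, gives $D_g(r) = D_p(r)$. Hence $\bar\pi(g) \in V_{D_p}$. This is where the hypothesis on $q$ is used: $q$ must not exceed the relevant distances. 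Conversely, given $h$ extending $D_p$, the argument of (1) produces $g = s(h)\cmp u$ extending $p$ exactly, and therefore $g \in V_{p,q}$.

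The main obstacle is openness of $\bar\pi$. We need the sets $V_{p,q}$ with $p$ precise and $q$ small to form a neighbourhood basis of $\AutM(\clo{\U})$. Given a basic neighbourhood $N_{A,r}(f)$, we enlarge $A$ to a finite set $A'$ such that $A'$ and $f[A']$ are precise. Taking $p = f\restr{A'}$, every distance of $\dom(p)$ is attained, and $D_p$ maps the distances of $A'$ onto those of $f[A']$, so $\rng(p)$ is precise too. For the enlargement, one can simply take $A' = A$ and let $D_{A'}$ be the set of distances actually attained in $A$, since subspaces carry their attained distances. We then shrink $q \le r$ below all positive distances involved. Thus $N_{A',q}(f) \subs N_{A,r}(f)$, and $\bar\pi$ of it is open. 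By the homomorphism property this yields openness of $\bar\pi$.
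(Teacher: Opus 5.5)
Your proof is correct, but it takes a different route from the paper's.

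The paper proves (1) by a back-and-forth construction that uses only the isometric extension property of $\U$. Points are added to the domain and range of $p$ one at a time, each time realizing in $\U$ a one-point extension whose distances are dictated by $h$. It then reduces (2) to (1) by density. Replacing each point of $\dom(p)\cup\rng(p)$ by a point of $\U$ at distance less than $q$ gives a partial automorphism $p'$ of $\U$ with $D_{p'}=D_p$ and $V_{p',q}=V_{p,q}$, so $V_{D_p}=\pi[V_{p'}]\subseteq\bar\pi[V_{p,q}]$.

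You instead split the problem in two: realize the distance part via the section of Proposition~\ref{thm:section}, then correct by an isometry via iso-homogeneity. This is exactly the trick in the proof of Lemma~\ref{thm:dc_from_iso}, and you run it verbatim in $\clo{\U}\cong\clo{V_S}$.

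\begin{itemize}
\item \emph{What your route buys.} It is shorter and uniform. In (2) it gives slightly more: an automorphism extending $p$ exactly, not just one that is $q$-close to $p$. The argument also works for any iso-homogeneous space whose canonical projection is onto.
\item \emph{What it costs.} It depends on the concrete representations. For (2) it also needs iso-homogeneity of $\clo{V_S}$, which in the paper rests on Theorem~\ref{thm:point_iso-homogeneous}.
\item \emph{What the paper's route buys.} It needs only the Fra\"iss\'e extension property and density. This keeps the short exact sequences independent of the semidirect-product decomposition, which the paper derives from the section afterwards.
\end{itemize}

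Two small remarks on your write-up. First, you check explicitly that the sets $V_{p,q}$ with $p=f\restr{A}$ (taking the attained distances of $A$) and $q$ small form a neighbourhood base at $f$. This supplies the step behind ``in particular, $\bar\pi$ is open'' that the paper leaves implicit. Second, for the inclusion $\bar\pi[V_{p,q}]\subseteq V_{D_p}$ you do not need an extension of $p$. The ultrametric inequality applied to $g(x),p(x),p(y),g(y)$ already gives $D_g(r)=D_p(r)$.
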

\begin{proof}
    We first prove (1).
    Take any $h\in V_{D_p}$.
    Suppose $f_0 \supseteq p$ is a precise finite partial dc-automorphism of $\U$ (and note that such extension exists since $p$ can be extended to a total dc-automorphism of $\U$ and since $\U$ is a precise space).
    We first show that for every $x\in \U\setminus\dom(f_0)$, we can extend $f_0$ to a precise finite partial dc-automorphism $f_1$ such that $x\in \dom(f_1)$ and $D_{f_1}$ still agrees with $h$.
    Let $A=\dom(f_0)$ and $B=\rng(f_0)$. We will construct $y\in \U$ such that setting $x\mapsto y$ we get the required~$f_1$. 
    
    Let $B' = B \cup \set{y'}$ for a new point $y'$.
    We extend the ultrametric from $B$ to $B'$ by putting $d(y', b) = h(d(x, f_0^{-1}(b)))$ for every $b \in B$.
    Then $f_0 \cup \set{x \mapsto y'}$ becomes a dc-isomorphism $A \cup \set{x} \to B'$ (which also shows that $B'$ is an ultrametric space).
    Since $\U$ is the Fraïssé limit of $\UltIso^{\Qpos}$, we can apply the isometric extension property to  $B\leq \U$ and $B\leq B'$ to obtain a point $y \in \U$ such that $\id{B} \cup \set{y' \mapsto y}$ is an isometry.
    Hence, $f_1 = f_0 \cup \set{x \mapsto y}$ is the desired dc-isomorphism $A \cup \set{x} \to B \cup \set{y}$.
    
    We have just shown how to extend $f_0$ to $f_1$ by adding an $x$ to the domain. To get an $f\in V_p$ such that $\pi(f)=h$ we start with $f_0=p$ and then  do a back-and-forth construction extending the domain at odd steps and the range at even steps. The union of partial functions constructed at all stages is the required~$f$.

    Now we show (2).
    We can identify $\Aut(\U)$ with the (algebraic) subgroup of $\Aut(\clo{\U})$, see Observation \ref{exttoCauchy}, so that $\bar{\pi}$ extends $\pi$. We want to show that $\bar{\pi}[V_{p, q}] = V_{D_p}$.
    The inclusion $\bar{\pi}[V_{p, q}] \subs V_{D_p}$ follows from Observation~\ref{thm:distance_neighborhood} since we assume that $q \leq (D_{\dom(p)} \cup D_{\rng(p)}) \setminus \set{0}$.

    On the other hand, for every $t \in \dom(p) \cup \rng(p)$ we fix a point $t' \in B(t, q) \cap \U$, and we put $p'(x') = y'$ whenever $p(x) = y$.
    Then $p'$ is a partial automorphism of $\U$ with $D_{p'} = D_p$ and $V_{p', q} = V_{p, q}$.
    Therefore, $V^{\clo{\U}}_{p, q} \supseteq V^\U_{p'}$, and so $\bar{\pi}[V_{p, q}] \supseteq \pi[V_{p'}] = V_{D_{p'}} = V_{D_p}$.
\end{proof}

\begin{wn}\label{short}
The sequences 
\begin{gather*}
1 \xrightarrow{} \Iso(\U) \xrightarrow{i} \aut(\U) \xrightarrow{\pi}
 \aut(\qplus) \xrightarrow{} 1, \\
1 \xrightarrow{} \IsoM(\clo{\U}) \xrightarrow{\bar{i}} \AutM(\clo{\U}) \xrightarrow{\bar{\pi}}
 \AutM(\qplus) \xrightarrow{} 1
\end{gather*}
are short exact.
\end{wn}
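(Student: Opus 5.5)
We check the four ingredients of the definition of a short exact sequence of topological groups for each sequence: $i$ (resp.\ $\bar i$) is a continuous embedding, $\pi$ (resp.\ $\bar\pi$) is a continuous open surjection, and $\rng(i) = \ker(\pi)$. Note that $\AutM(\qplus)$ in the second sequence is just $\Aut(\qplus)$ with the pointwise convergence topology, which is the codomain of $\bar\pi$ considered in Proposition~\ref{basic_image}.

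\textbf{Embeddings and kernels.} By definition, $\Iso(\U)$ and $\IsoM(\clo{\U})$ carry the subspace topologies of $\Aut(\U)$ and $\AutM(\clo{\U})$, respectively. Hence $i$ and $\bar i$ are topological group embeddings. Moreover, a dc-automorphism $f$ lies in the kernel of the canonical projection exactly when $D_f=\id{\Qpos}$, i.e.\ exactly when $f$ is an isometry. So $\rng(i)=\ker\pi$ and $\rng(\bar i)=\ker\bar\pi$. For $\clo{\U}$ we use that $D_{\clo{\U}}=\Qpos$, which follows from Lemma~\ref{thm:dense_embedding} since $\U$ is precise.

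\textbf{Continuity.} Continuity of $\pi$ and $\bar\pi$ is Observation~\ref{thm:canonical_projection_continuous}. For $\bar\pi$ we use that $\clo{\U}$ is precise, which again follows from Lemma~\ref{thm:dense_embedding} and the precision of $\U$.

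\textbf{Surjectivity and openness.} Openness of both maps is the final clause of Proposition~\ref{basic_image}. Surjectivity also comes from Proposition~\ref{basic_image}. Taking $p$ to be the empty partial automorphism of $\U$ with $D_p=\{0\}$ in (1), we get $\pi[\Aut(\U)]=\pi[V_p]=V_{D_p}=\Aut(\qplus)$. Alternatively, surjectivity follows from Proposition~\ref{thm:section} via $\U\cong V_S$, and for $\clo{\U}$ via $\clo{\U}\cong\clo{V_S}$ from Proposition~\ref{thm:concrere}. For $\bar\pi$ one can also argue directly: $\Aut(\U)\subs\Aut(\clo{\U})$ by Observation~\ref{exttoCauchy}, and $\bar\pi$ extends $\pi$.

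The only point requiring care is the degenerate case in Proposition~\ref{basic_image}(2): with $p$ empty, the condition on $q$ is vacuous. For this reason it is cleaner to derive surjectivity of $\bar\pi$ from surjectivity of $\pi$ together with the inclusion $\Aut(\U)\subs\Aut(\clo{\U})$. Everything else is routine.
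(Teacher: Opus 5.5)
Your proof is correct and follows the paper's own argument: continuity of $\pi$ and $\bar\pi$ from Observation~\ref{thm:canonical_projection_continuous}, the kernel being exactly the isometries, and openness and surjectivity from Proposition~\ref{basic_image} applied to the empty partial automorphism. One small point: $D_{\clo{\U}}=\Qpos$ and the precision of $\clo{\U}$ hold directly, because the Cauchy completion is defined with the same distance set and contains the precise space $\U$; citing Lemma~\ref{thm:dense_embedding} is unnecessary, since that lemma already assumes the target is precise.
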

\begin{proof}
    Clearly $i$ is a continuous embedding, $\pi$ is continuous by Observation~\ref{thm:canonical_projection_continuous}, and the image of $i$ is equal to the kernel of $\pi$.
    By Proposition~\ref{basic_image}, $\pi$ is open, and since $\pi[\Aut(\U)] = \pi[V^\U_\emptyset] = V^{\Qpos}_\emptyset = \Aut(\Qpos)$, we have that $\pi$ is also surjective.
    The proof for $\bar{i}$ and $\bar{\pi}$ is completely analogous.
\end{proof}

Using the concrete representation of $\U$ obtained in Section \ref{section:concrete}, we show that $\Aut(\U)$ is the topological semidirect product $\Iso(\U) \rtimes  \aut(\qplus)$, from which Corollary~\ref{short} follows as well.

Recall that given topological groups $N$, $H$ and a homomorphism $\alpha\maps H \to \Aut(N)$ such that the corresponding map $H \times N \to N$ is continuous, we can form the topological semidirect product $N \rtimes_\alpha H$.
Topologically, we take the product $N \times H$, but the group operation and inverse are defined by
\[
    \seq{n, h} \cdot \seq{n', h'} = \seq{n \alpha_h(n'), hh'} \qquad \seq{n, h}^{-1} = \seq{n \alpha_{h^{-1}}(n^{-1}), h^{-1}}.
\]
Then $N \rtimes_\alpha H$ is a topological group, the corresponding projection $N \rtimes_\alpha H \to H$ is an open continuous homomorphism, whose kernel can be identified with $N$. The section $s\colon H\to N \rtimes_\alpha H$, $s(h)=\seq{1,h}$, is a continuous homomorphism.

Conversely, suppose that $\pi\maps G \to H$ is a continuous homomorphism of topological groups admitting a continuous section.
Note that it follows that $\pi$ open surjective since it is a homomorphism and a quotient map.
Let $N \leq G$ denote the kernel of $\pi$.
We consider the continuous action $\alpha_h(n) = s(h)n s(h)^{-1} \in N$ for $h \in H, n \in N$, and the map $\phi\maps N \rtimes_\alpha H \to G$ defined by $\phi(\seq{n, h}) = n s(h)$.
One can check that $\phi$ is an isomorphism of topological groups.
In this case we say that $\pi$ induces an isomorphism $G \cong N \rtimes H$ of topological groups. For a discussion on topological semidirect products, see also  \cite[Section 7.2]{DPS}.

\begin{tw} \label{thm:semidirect} \hfill
    \begin{enumerate}
        \item The canonical projection $\pi\maps \Aut(\U) \to \Aut(\Qpos)$ induces an isomorphism $\Aut(\U) \cong \Iso(\U) \rtimes \Aut(\Qpos)$.
        \item The canoninal projection $\bar{\pi}\maps \AutM(\clo{\U}) \to \Aut(\Qpos)$ induces an isomorphism $\AutM(\clo{\U}) \cong \IsoM(\clo{\U}) \rtimes \Aut(\Qpos)$.
    \end{enumerate}
    In particular, the short exact sequences $1 \to \Iso(\U) \to \Aut(\U) \to \Aut(\Qpos) \to 1$ and $1 \to \IsoM(\clo{\U}) \to \AutM(\clo{\U}) \to \Aut(\Qpos) \to 1$ split.
\end{tw}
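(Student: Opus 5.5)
By the general discussion preceding the theorem, it suffices in each case to exhibit a continuous homomorphic section of the canonical projection. Once we have that, $\pi$ (resp.\ $\bar\pi$) automatically induces the isomorphism with the topological semidirect product of its kernel and $\Aut(\Qpos)$. The kernels are $\Iso(\U)$ and $\IsoM(\clo{\U})$ by definition, and continuity of the projections is Observation~\ref{thm:canonical_projection_continuous}. So the only real work is to produce the sections, and for this I will use the concrete representations from Section~\ref{section:concrete}.

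For (1), fix a countable infinite $S \ni 0$. By Proposition~\ref{thm:concrere}(1) there is a dc-isomorphism $\phi\maps V_S \to \U$. Let $\psi = D_\phi\maps \Qpos \to D_\U$, and identify $D_\U$ with $\Qpos$ (as in Theorem~\ref{thm:U_Fraisse}). Conjugation by $\phi$ gives an isomorphism of topological groups $c_\phi\maps \Aut(V_S) \to \Aut(\U)$, $g \mapsto \phi \cmp g \cmp \phi^{-1}$; it is a homeomorphism because both topologies are the pointwise ones on discrete two-sorted structures and $\phi$ is a bijection on both sorts. Under this conjugation the projections correspond via $\pi_\U(c_\phi(g)) = \psi \cmp \pi_{V_S}(g) \cmp \psi^{-1}$. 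I then define
\[
    s(h) = c_\phi\bigl(s_{V_S}(\psi^{-1} \cmp h \cmp \psi)\bigr),
\]
where $s_{V_S}$ is the section from Proposition~\ref{thm:section}. This $s$ is a composition of continuous homomorphisms, and it satisfies $\pi \cmp s = \id{}$. Alternatively, one can choose $\phi$ so that $\psi$ is the identity and skip the conjugation of $h$.

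For (2), I do the same with $\clo{V_S}$, which is the Cauchy completion of $V_S$ by Proposition~\ref{thm:concrere}(2). Here the first step is to extend $\phi$ to a dc-isomorphism $\bar\phi\maps \clo{V_S} \to \clo{\U}$. This follows from Observation~\ref{exttoCauchy} applied to $\phi$ and $\phi^{-1}$, which are uniformly continuous by Lemma~\ref{thm:dc-continuity}. The second step is to check that conjugation by $\bar\phi$ is a homeomorphism $\AutM(\clo{V_S}) \to \AutM(\clo{\U})$. For this I will use that $\bar\phi$ maps the basic neighbourhoods $N_{A,r}$ onto $N_{\bar\phi[A], \psi(r)}$, since $\bar\phi$ carries open balls of radius $r$ to open balls of radius $\psi(r)$. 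The third step is to compose with the continuous section $s_{\clo{V_S}}\maps \Aut(\Qpos) \to \AutM(\clo{V_S})$ from Proposition~\ref{thm:section}.

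I expect the main care to be needed in the metric-topology case (2): verifying that the conjugated section is continuous into $\AutM$, and that the section, the projection and the kernel interact correctly with the $\AutM$ topology. The latter is what guarantees that the map $\IsoM(\clo{\U}) \rtimes \Aut(\Qpos) \to \AutM(\clo{\U})$ is a homeomorphism. This is covered by the general remark before the theorem, because $\bar\pi$ is continuous (Observation~\ref{thm:canonical_projection_continuous}; $\clo{\U}$ is precise by Lemma~\ref{thm:dense_embedding}) and $\AutM(\clo{\U})$ is a topological group by Lemma~\ref{thm:AutM}. The splitting of the short exact sequences is then immediate, consistent with Corollary~\ref{short}.
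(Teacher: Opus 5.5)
Your proposal is correct and follows the same route as the paper. Both arguments take the concrete representations $V_S$ and $\clo{V_S}$ of $\U$ and $\clo{\U}$ from Proposition~\ref{thm:concrere}, use the continuous sections from Proposition~\ref{thm:section}, and apply the general semidirect-product discussion; your explicit transport of the section along the dc-isomorphism (including the check that conjugation by $\bar\phi$ sends $N_{A,r}$ to $N_{\bar\phi[A],\psi(r)}$) fills in details that the paper leaves implicit.
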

\begin{proof}
    By Proposition~\ref{thm:concrere}, $V_\Q$ is a concrete representation of $\U$, and $\clo{V_\Q}$ is a concerete representation of $\clo{\U}$.
    By Proposition~\ref{thm:section}, the canonical projections admit  continuous sections, so we can use the theory above to get the conclusion.
\end{proof}

We now proceed to the class  $\ultraconf$ of convexly  ordered ultrametric spaces, we denoted 
 its Fraïssé limit by 
$\U^\prec$. Let $\Iso(\U^\prec)$ be the group of isometries of
$\U^\prec$, let $\pi^\prec\colon \aut(\U^\prec)\to \aut(\qplus)$ be the canonical projection, and let $i^\prec \colon \Iso(\U^\prec)\to \aut(\U^\prec)$ be the embedding. Then  $i^\prec[\Iso(\U^\prec)]$ is a closed subgroup of $\aut(\U^\prec)$.  We can identify $\Iso(\U^\prec)$
with the isometry group of the classical convexly ordered ultrametric Urysohn space $\U_\Q^\prec$, that is, the isometry group of the Fraïssé limit of the class of finite  convexly ordered ultrametric spaces with rational distances.

\begin{tw}\label{shortprec}
\[
1 \xrightarrow{} \Iso(\U^\prec) \xrightarrow{i^\prec} \aut(\U^\prec) \xrightarrow{\pi^\prec}
 \aut(\qplus) \xrightarrow{} 1
\]
is a short exact sequence. 
Moreover, the canonical projection $\pi^\prec$ induces an isomorphism $\Aut(\U^\prec) \cong \Iso(\U^\prec) \rtimes \Aut(\Qpos)$
\end{tw}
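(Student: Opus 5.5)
The plan is to follow the pattern of Theorem~\ref{thm:semidirect}, relying on the concrete representation $V_\Q^\prec$ of $\U^\prec$ from Proposition~\ref{thm:concrere}(1). Once we have a continuous homomorphic section $s\maps \Aut(\Qpos) \to \Aut(\U^\prec)$ of the continuous homomorphism $\pi^\prec$, the general discussion preceding Theorem~\ref{thm:semidirect} does the rest. It shows that $\pi^\prec$ is an open surjection (a homomorphism with a continuous section is a quotient map), and that $\pi^\prec$ induces $\Aut(\U^\prec) \cong \ker(\pi^\prec) \rtimes \Aut(\Qpos)$.

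The kernel of $\pi^\prec$ consists exactly of the dc-automorphisms with $D_f = \id{}$, i.e.\ the order-preserving isometries, so it equals $i^\prec[\Iso(\U^\prec)]$. Moreover, $i^\prec$ is a continuous embedding onto this closed subgroup, since $\Iso(\U^\prec)$ carries the subspace topology. Continuity of $\pi^\prec$ is as in Observation~\ref{thm:canonical_projection_continuous}: the distance sort is part of the structure. Together these give the short exact sequence.

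The main step is the section. Following Proposition~\ref{thm:section}, I define $s(h)(f) = \seq{f \cmp h^{-1}, h}$ on $V_\Q^\prec$. That proof already shows that $s(h)$ is a dc-automorphism of $V_\Q$ with $D_{s(h)} = h$, that $s$ is a homomorphism with $\pi \cmp s = \id{}$, and that $s$ is continuous.

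It remains to check that $s(h)$ preserves the convex order $f \prec g \iff f(r) < g(r)$ for $r = d(f,g)$. Put $r' = d(f\cmp h^{-1}, g \cmp h^{-1}) = h(r)$. Then $(f\cmp h^{-1})(r') = f(r)$ and $(g\cmp h^{-1})(r') = g(r)$, so the comparison is literally unchanged. Continuity of $s$ into $\Aut(\U^\prec)$ is the same as in Proposition~\ref{thm:section}, because the topology on $\Aut(\U^\prec)$ is the subspace topology from $\Aut(\U)$: both are pointwise convergence on the point and distance sorts, and the order adds no new sort.

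I do not expect a real obstacle. The only point needing care is that we must use the ordered representation $V_\Q^\prec$, with $S = \Q$ densely ordered, rather than an arbitrary $V_S$, and that the translation $f \mapsto f \cmp h^{-1}$ acts only on the argument of $f$ and never on the values in $S$. This is precisely why it respects the convex order.
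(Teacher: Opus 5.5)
Your proposal is correct and matches the paper's proof of the semidirect decomposition: it uses the section $s(h)(f) = \seq{f \cmp h^{-1}, h}$ on $V_\Q^\prec$ from Proposition~\ref{thm:section}, and checks that it preserves the convex order via $d(f \cmp h^{-1}, g \cmp h^{-1}) = h(d(f,g))$. The only difference is the exactness part: the paper gets it from the strong amalgamation of Lemma~\ref{thm:C_one_point_amalgamation} (a back-and-forth as in Proposition~\ref{basic_image}), whereas you read off surjectivity and openness of $\pi^\prec$ from the continuous section, which is equally valid and is also remarked in the paper just before Theorem~\ref{thm:semidirect}.
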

\begin{proof}
We proceed as in the proofs of Theorems~\ref{short} and \ref{thm:semidirect}.
The first part follows from the fact that $\ultraconf$ has the strong amalgamation property for isometries (Lemma~\ref{thm:C_one_point_amalgamation}).

In the second part we use that by Proposition~\ref{thm:concrere}, $V_\Q^\prec$ is a concrete representation of $\U^\prec$, and that 
the section $s = s_{V_{\Q}}\maps \Aut(\Qpos) \to \Aut(V_{\Q})$ defined in Proposition~\ref{thm:section} by $s(h)(f) = \seq{f \cmp h^{-1}, h}$ preserves the convex order, i.e. $s\maps \Aut(\Qpos) \to \Aut(V_{\Q}^\prec)$.
Indeed, it simply follows from the definition of the convex order:
$
f \prec g \iff f(x) < g(x) \text{ where } x = d(f, g) = \max\set{q \in \Qpos: f(q) \neq g(q)}.
$
\end{proof}

Let $G$ be a Hausdorff topological group. 
We say that $G$ is {\it extremely amenable} if  every continuous action of $G$ on a compact Hausdorff space has a fixed point.
It is {\it amenable} if every continuous affine action of $G$ on a compact convex subspace of a locally convex topological vector space admits a fixed point.

A {\it minimal flow} of  $G$ on a compact Hausdorff space $X$ is a continuous action of $G$ on $X$ such that all orbits are dense. We say that it is metrizable if $X$ is metrizable.
A minimal flow $M(G)$ of $G$ is a {\it universal minimal flow} if for any minimal flow $Y$ of $G$ there is a $G$-invariant continuous surjection $\rho\colon M(G)\to Y$. It is well known that for any topological Hausdorff group there is a unique up to isomorphism universal minimal flow.

We now apply results of Jahel--Zucker. Theorem \ref{jahzuc}(1) is Theorem 1.1 in \cite{JZ} and parts (2) and (3) of Theorem \ref{jahzuc} are sketched on page 15 of  \cite{JZ}.
 For a  proof of \ref{jahzuc}(2), see also
 Pestov~\cite[Corollary 6.2.10]{Pe2}.

\begin{tw}[Jahel--Zucker]\label{jahzuc}
Let \[ 1 \xrightarrow{} G_1 \xrightarrow{i} H \xrightarrow{\pi} G_2   \xrightarrow{} 1\]
be a short exact sequence of Polish groups. 
\begin{enumerate}
\item If both $G_1, G_2$  have metrizable universal minimal flows, then $H$ has metrizable universal minimal flow.
\item If both $G_1, G_2$  are extremely amenable, then $H$ is extremely amenable.
\item If both $G_1, G_2$  are amenable, then $H$ is amenable.
\end{enumerate}
\end{tw}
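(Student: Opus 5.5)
Parts (2) and (3) are the ones not proved in \cite{JZ} itself, so the plan treats those, taking (1) as the cited result. Both (2) and (3) go through the fixed-point characterizations and the identification $G_2 \cong H/i[G_1]$. Since $\pi$ is a continuous open surjection with kernel $N = i[G_1]$, it induces an isomorphism of topological groups $H/N \cong G_2$. Also, $i$ is a continuous injective homomorphism of Polish groups onto the closed subgroup $N$, so by the open mapping theorem for Polish groups $G_1 \cong N$ topologically.

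For (2), let $H$ act continuously on a compact Hausdorff space $X$. Restrict the action to $N \cong G_1$; by extreme amenability of $G_1$ the fixed-point set $X^N = \set{x \in X : nx = x \text{ for all } n \in N}$ is nonempty. It is closed, being an intersection of the closed sets $\set{x : nx = x}$, and hence compact. It is $H$-invariant because $N$ is normal: for $x \in X^N$, $h \in H$ and $n \in N$ we have $n h x = h (h^{-1} n h) x = h x$. Thus $H$ acts on $X^N$ with $N$ acting trivially, so the action factors through $H/N \cong G_2$. This factored action is continuous because $\pi$ is open, so the map $G_2 \times X^N \to X^N$ is obtained from a quotient map ($\pi \times \id{}$ is open, hence a quotient). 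Extreme amenability of $G_2$ now yields a point fixed by all of $H$.

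For (3) the argument is identical, with compact convex sets and continuous affine actions in place of compact spaces. The set $K^N$ is a nonempty closed convex subset, hence compact convex, and it is $H$-invariant. The induced $G_2$-action on it is still affine, so amenability of $G_2$ gives a fixed point.

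The only delicate step is the continuity of the factored action, which is exactly where openness of $\pi$ (part of the definition of a short exact sequence) is used. That is the point I would verify most carefully.
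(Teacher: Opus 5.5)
Your argument is correct and matches the paper, which gives no proof of its own: it cites Theorem~1.1 of \cite{JZ} for (1), just as you do, and for (2) and (3) it cites the sketch on page~15 of \cite{JZ} and Pestov~\cite[Corollary 6.2.10]{Pe2}, which rest on the same standard argument you give (the set of $i[G_1]$-fixed points is a nonempty, closed, $H$-invariant set on which the action factors through $G_2$, continuously because $\pi$ is open). One minor simplification: you do not need the open mapping theorem, since composing the $H$-action with the continuous homomorphism $i$ already gives a continuous $G_1$-action, and its fixed points are exactly the points fixed by $i[G_1]$.
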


\begin{tw}\label{examen} \hfill
\begin{enumerate}
\item The Polish group $\aut(\U)$ has metrizable universal minimal flow.
\item The Polish group $\aut(\U^\prec)$ is extremely amenable.
\item The Polish group $\aut(\U)$ is  amenable.
\end{enumerate}
\end{tw}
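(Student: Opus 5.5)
The plan is to apply Theorem~\ref{jahzuc} to the short exact sequences already established: Corollary~\ref{short} for $\aut(\U)$ and Theorem~\ref{shortprec} for $\aut(\U^\prec)$. All groups involved are Polish, since $\U$, $\U^\prec$ and $\Qpos$ are countable structures. So the work reduces to verifying the hypotheses on the two ends, namely the properties of $\aut(\Qpos)$ and of the isometry groups $\Iso(\U)$ and $\Iso(\U^\prec)$.

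For the quotient $\aut(\Qpos)$: since $\Lin^0$ is equivalent to $\Lin$, the group $\aut(\Qpos)$ is topologically isomorphic to $\aut(\Q)$, as the least element $0$ is fixed anyway. Pestov's theorem says that $\aut(\Q)$ is extremely amenable. In particular it is amenable, and its universal minimal flow is a single point, hence metrizable.

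For the kernels, the plan is to use the Kechris--Pestov--Todorčević correspondence, legitimate here by Remark~\ref{rem:many-sorted}, applied to the classical rational ultrametric Fraïssé classes. By Theorem~\ref{thm:CU_Fraisse} and the subsequent remark, $\Iso(\U^\prec)$ is the automorphism group of the Fraïssé limit of finite convexly ordered rational ultrametric spaces with isometric embeddings. This class is known to be a Ramsey class (Nguyen Van Thé). Since the convex order is a linear order, KPT gives that $\Iso(\U^\prec)$ is extremely amenable. For $\Iso(\U)$, the class of convexly ordered finite rational ultrametric spaces is a reasonable, precompact expansion of the isometric class with the ordering property, again by Nguyen Van Thé. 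Reasonability is immediate from Lemma~\ref{thm:forget_convex}, since convex orders always extend. Hence the universal minimal flow of $\Iso(\U)$ is the compact metrizable space of convex orders on $\U$. The isometry groups also have to be treated as automorphism groups of countable (one-sorted, rational-valued) structures; this is unproblematic because for isometries the distance sort is fixed pointwise.

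Then Theorem~\ref{jahzuc}(2) applied to Theorem~\ref{shortprec} gives (2). Theorem~\ref{jahzuc}(1) applied to Corollary~\ref{short} gives (1). For (3), $\Iso(\U)$ is amenable because it has a metrizable universal minimal flow carrying an invariant measure. Alternatively, it contains the extremely amenable group $\Iso(\U^\prec)$ as a co-precompact subgroup, and that suffices. Combining this with the amenability of $\aut(\Qpos)$, Theorem~\ref{jahzuc}(3) gives (3). The main obstacle is the amenability of $\Iso(\U)$. The cleanest route I would try first is to cite the known result that the ordered ultrametric class has the ordering property, together with the existence of a unique consistent random convex order: choose the order uniformly at random, independently on each adjacency class. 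This is an invariant measure on the universal minimal flow, giving amenability via the Angel--Kechris--Lyons criterion.
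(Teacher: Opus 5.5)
Your proof of (1) and (2), and your reduction of (3) to amenability of $\Iso(\U)$ and $\Aut(\Qpos)$, match the paper. Both apply Theorem~\ref{jahzuc} to Corollary~\ref{short} and Theorem~\ref{shortprec}, using Pestov for $\Aut(\Qpos)$ and Nguyen Van Th\'e for the isometry groups. You merely unpack his Theorems~20 and~21 into the Ramsey property, the ordering property and KPT.

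The genuine difference is how you get amenability of $\Iso(\U)$. The paper uses EPPA for finite rational ultrametric spaces (Malicki) and Kechris--Rosendal to find an increasing chain of compact subgroups with dense union. You instead exhibit an invariant probability measure on the universal minimal flow $\mathrm{CLO}(\U)$.

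Your measure works once made precise. Every adjacency class of $\U$ is countably infinite, so ``uniform'' has to mean the exchangeable random linear order, i.e.\ the Kolmogorov extension of the uniform orders on finite subsets. The product of these over all classes is invariant, because $\mathrm{CLO}(\U)$ is the product of the spaces of linear orders of the classes, and every isometry maps adjacency classes bijectively onto adjacency classes.

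The two routes buy different things:
\begin{itemize}
\item The paper's route avoids the ordering property and the identification of the flow.
\item Yours extends further. Dc-automorphisms also preserve adjacency (Proposition~\ref{prop:thebee}), so the same measure is $\Aut(\U)$-invariant. Combined with the description of the universal minimal flow of $\Aut(\U)$ as $\mathrm{CLO}(\U)$ (proved right after this theorem), it gives (3) directly, without Theorem~\ref{jahzuc}(3).
\item The compact-subgroup argument cannot be run in $\Aut(\U)$ itself. Since $\Aut(\Qpos)$ has no nontrivial compact subgroups, all compact subgroups of $\Aut(\U)$ lie in the closed proper subgroup $\Iso(\U)$, so their union cannot be dense.
\end{itemize}
The uniqueness of the consistent random convex order, which you also assert, is true but not needed.

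You should, however, delete the sentence claiming that containing the extremely amenable co-precompact subgroup $\Iso(\U^\prec)$ ``suffices'' for amenability. That is false in general. For instance, $\mathrm{Homeo}(2^\N)$, the automorphism group of the countable atomless Boolean algebra, contains the extremely amenable stabilizer of a generic natural ordering as a co-precompact subgroup (Graham--Rothschild plus KPT). Yet it is not amenable, since its action on $2^\N$ has no invariant probability measure. A co-precompact extremely amenable subgroup $H \leq G$ gives amenability of $G$ exactly when the completion of $G/H$ carries a $G$-invariant measure, which is precisely what your random convex order supplies.
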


\begin{proof}
    Pestov \cite[Theorem 5.4]{Pe} proved that $\aut(\Q)$, which is isomorphic to $\aut(\qplus)$, is extremely amenable. In particular, it is amenable and it has a metrizable universal minimal flow. 
    On the other hand, Nguyen Van Thé \cite[Theorem 20]{N} proved that the group $\Iso(\U^\prec)$ is extremely amenable and that $\Iso(\U)$ has  metrizable universal minimal flow \cite[Theorem 21]{N}. Therefore Theorem \ref{short} implies that $\aut(\U)$ has metrizable universal minimal flow and Theorem \ref{shortprec} implies that $\aut(\U^\prec)$ is extremely amenable. 
    
    It follows from Lemma~4.2 in Malicki \cite{Mal} (applied to $X=\U_\Q$) that for any $Y\in\UltIso^{\Qpos}_\fin$ there is $Y \leq Z\in\UltIso^{\Qpos}_\fin$ such that any partial isometry of $Y$ extends to an isometry of $Z$ (i.e. the domain and range are equal to $Z$). Then Proposition~6.4 in Kechris--Rosendal \cite{KR} implies that $\Iso(\U_\Q)$ contains a countable chain of compact subgroups whose union is dense in $\Iso(\U_\Q)$. 
    As compact groups are amenable, we immediately conclude 
    that $\Iso(\U_\Q)=\Iso(\U)$ is amenable. Therefore by Theorem~\ref{short} $\aut(\U)$ is amenable.
\end{proof}

\begin{uwgi}
The property mentioned in the proof above is called the \emph{extension property for partial automorphisms} (EPPA), see \cite{HerLas} and \cite{Hru}. In the special case when $\mathcal{C}$ is a Fraïssé class it says that
 for any $Y\in\mathcal{C}$ there is $Y \leq Z\in\mathcal{C}$ such that any partial automorphism of $Y$ extends to an automorphism of $Z$.
We note that another difference between the isometric and dc categories is that $\UltIso^{\Qpos}_\fin$ has EPPA as shown by Malicki, while $\Ult_\fin$ does not: no partial dc-isomorphism that moves a distance can be extended to a total dc-automorphism of a finite space.
\end{uwgi}

Using the Kechris--Pestov--Todorčević correspondence~\cite[Theorem~4.8]{KPT} translating between extreme amenability and the Ramsey property, we immediately obtain the following corollary.
\begin{wn}
    $\UltConv_\fin$ is a Ramsey class.
\end{wn}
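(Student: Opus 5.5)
We derive the statement from the Kechris--Pestov--Todorčević correspondence \cite[Theorem~4.8]{KPT}, applied to the Fraïssé class $\UltConv_\fin$ and its limit $\U^\prec$. KPT states that the automorphism group of the Fraïssé limit of a Fraïssé class $\K$ of finite structures is extremely amenable if and only if $\K$ has the Ramsey property and consists of rigid structures, typically guaranteed by a linear order in the signature. So the plan is to verify the hypotheses of KPT for $\UltConv_\fin$ and then invoke Theorem~\ref{examen}(2).

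First, by Remark~\ref{rem:many-sorted}, we replace the two-sorted structures of $\UltConv_\fin$ by equivalent one-sorted structures with the same embeddings. One choice is the disjoint union $X \cup D_X$ with unary predicates for the sorts, the order on $D_X$, the convex order on $X$, and the ternary relation $d(x,y)=r$. Dc-embeddings preserving the convex order then correspond exactly to embeddings of these one-sorted structures. Under this translation, Theorem~\ref{thm:CU_Fraisse} gives that $\UltConv_\fin$ is a Fraïssé class with limit $\U^\prec$, with automorphism group $\Aut(\U^\prec)$ carrying the pointwise convergence topology. HP is not needed for KPT in the form used, since the general statement works for Fraïssé classes; if a hereditary class is required, note that substructures of the one-sorted translations are again convexly ordered ultrametric spaces.

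Second, we check rigidity. Every object $X \in \UltConv_\fin$ has trivial automorphism group, because an automorphism preserves the linear order $\preceq$ on the finite point sort and the linear order on the finite distance sort $D_X$, so it is the identity on both sorts. (If the one-sorted encoding requires a global linear order, we take the order in which points precede distances.) Hence KPT applies. Since $\Aut(\U^\prec)$ is extremely amenable by Theorem~\ref{examen}(2), $\UltConv_\fin$ has the Ramsey property, i.e.\ it is a Ramsey class.

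The main obstacle is purely bookkeeping: making sure the one-sorted encoding yields a class whose embeddings and Fraïssé limit match those of $\UltConv_\fin$, so that the topological group to which KPT is applied is exactly $\Aut(\U^\prec)$. The encoding must also include the distance sort. Using only the point sort would produce the isometric class $\IsoConv_\fin$ instead, whose Ramsey property is the known result of Nguyen Van Thé rather than the statement here.
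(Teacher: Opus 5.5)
Your core argument is the paper's own: $\Aut(\U^\prec)$ is extremely amenable by Theorem~\ref{examen}(2), and the Kechris--Pestov--Todorčević correspondence converts this into the Ramsey property of $\UltConv_\fin$. The gap is in the bookkeeping you yourself call the main obstacle: the relational encoding you chose does not have the properties you claim.

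First, there is no constant or predicate for $0$. A one-sorted embedding of $\seq{\emptyset,\set{0,3}}$ into $\seq{\emptyset,\set{0,3,5}}$ may then send $0\mapsto 3$ and $3\mapsto 5$, so embeddings do not correspond exactly to dc-embeddings.

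Second, and more seriously, coding $d$ by a ternary relation lets a substructure omit some of the distances $d(a,a')$. So the class is not hereditary, contrary to your remark. Worse, the encoded $\U^\prec$ is not ultrahomogeneous with respect to its age. Take points $a\prec a'$ with $d(a,a')=5$ together with the distance elements $0,3$, and points $b\prec b'$ with $d(b,b')=2$ together with $0,3$. The map $a\mapsto b$, $a'\mapsto b'$, $0\mapsto 0$, $3\mapsto 3$ is an isomorphism of the induced relational substructures. Yet no dc-automorphism extends it, since it would have to send $5>3$ to $2<3$.

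Fraïssé classes in KPT are hereditary by definition, and their theorem concerns the ultrahomogeneous limit of such a class. So KPT does not apply verbatim to your encoding. Your claim that ``HP is not needed for KPT in the form used'' conflates the paper's non-hereditary notion of Fraïssé class with KPT's.

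The fix is the encoding implicit in Remark~\ref{rem:many-sorted}. Code $d$ as a binary function (with a dummy value on non-point arguments) and $0$ as a constant, and, if you use the order-class version of KPT, add the definable global order. Then the finitely generated substructures are exactly the finite subspaces $\seq{A,E}$ with $E\supseteq\set{0}\cup d[A\times A]$. The class is hereditary, embeddings are exactly the order-preserving dc-embeddings, and ultrahomogeneity of $\U^\prec$ is precisely the homogeneity given by Theorem~\ref{thm:CU_Fraisse}. With this encoding KPT applies as you intend.

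Finally, rigidity is not a hypothesis for the direction you use. KPT yields both the Ramsey property and rigidity as consequences of extreme amenability, so your rigidity check is harmless but unnecessary.
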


Using the general theory developed by Kechris–Pestov–Todorčević \cite{KPT} we can strengthen Theorem \ref{examen}(1) by explicitly describing the universal minimal flow of $\aut(\U)$.
Let ${\rm{CLO}}(\U)$ be the space of all convex linear orderings on $\U$. This is a closed subspace of $\{0,1\}^{\U\times \U}$, hence it is compact. $\aut(\U)$ acts continuously on  $\{0,1\}^{\U\times \U}$ via $x\mathrel{(g\cdot {<})}y$ iff $g^{-1}(x)<g^{-1}(y)$.
Let $\prec^*$ denote the generic convex order of $\U^\prec$, i.e. $\U^\prec=(\U,\prec^*)$, and
let $X^*= \overline{\aut(\U)\cdot {\prec^*}}$. Then $X^*= {\rm{CLO}}(\U)$. 

\begin{tw}\label{tw: umf-desrip}
   The universal minimal flow of $\aut(\U)$ is 
   $\aut(\U) \curvearrowright {\rm{CLO}}(\U)$.
\end{tw}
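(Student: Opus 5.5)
We apply the Kechris--Pestov--Todorčević theory in its standard form. If $G = \Aut(\U)$ and $G^* = \Aut(\U^\prec)$ is an extremely amenable closed subgroup (Theorem~\ref{examen}(2)), then $G \curvearrowright X^* = \overline{G\cdot{\prec^*}}$ is the universal minimal flow as soon as two conditions hold. First, $\Ult^\prec_\fin$ is a reasonable precompact expansion of $\Ult_\fin$. Second, $\Ult^\prec_\fin$ has the ordering property relative to $\Ult_\fin$. Equivalently, the flow $X^*$ is minimal, and $G/G^*$ is precompact in the appropriate uniformity. Since the paper already records $X^* = {\rm CLO}(\U)$, the proof reduces to verifying these two combinatorial properties and then invoking \cite[Theorem~7.5/10.8]{KPT} or the Nguyen Van Thé version for expansions. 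One caveat is that our structures are two-sorted. By Remark~\ref{rem:many-sorted} we pass to a one-sorted encoding, and the convex order lives only on the point sort.

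\textbf{Reasonability and precompactness.} Reasonability means that every convex order on $A \leq B$ extends to one on $B$. This is exactly the star-surjectivity of the forgetful functor in Lemma~\ref{thm:forget_convex}: linearly extend the partial orders on adjacency classes. Precompactness holds because a finite space $A$ carries only finitely many convex orders. In particular, $\Ult^\prec_\fin$ is the Fraïssé class whose limit, after forgetting the order, is $\U$ (Theorem~\ref{thm:CU_Fraisse}). Hence $\prec^*$ is a generic expansion, and $X^*$ is the space of all convex orders, confirming $X^* = {\rm CLO}(\U)$: a linear order lies in the closure iff its restriction to each finite set is convex.

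\textbf{Ordering property (main step).} We must show the following: for every $A \in \Ult_\fin$ there is $B \in \Ult_\fin$ such that for any convex orders $\prec_A$ on $A$ and $\prec_B$ on $B$, the structure $(A,\prec_A)$ dc-embeds into $(B,\prec_B)$. Since dc-embeddings are allowed, we may use isometric copies (via Lemma~\ref{lm:inclusion}), so the question reduces to the isometric case. That case is known from Nguyen Van Thé's computation of the universal minimal flow of $\Iso(\U_\Q)$ \cite{N}, which used exactly convex orders on ultrametric spaces. I would therefore either cite it or reprove it by induction on the tree of precise balls. For a node with $k$ branching classes in $A$, the corresponding node of $B$ is given $k$ copies' worth of branching, say $N$ classes with $N$ large. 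Each class is replaced recursively by a witness structure for the subtree. Under any ordering of the $N$ classes of $B$, one can choose $k$ classes in any prescribed order (take $N \ge k$ and a suitable permutation), and recursion handles the inner levels. The main obstacle is making this recursion respect the levels, which is where the isometric form $\Iso(\U)$ result helps. With the ordering property established, KPT gives minimality of ${\rm CLO}(\U)$. Extreme amenability of $\Aut(\U^\prec)$ then gives universality, yielding the theorem.
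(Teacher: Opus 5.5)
Your proposal is correct and follows essentially the same route as the paper: extreme amenability of $\Aut(\U^\prec)$ (Theorem~\ref{examen}(2)) combined with the KPT correspondence, reasonability obtained from the star-surjectivity in Lemma~\ref{thm:forget_convex}, and the ordering property as the remaining combinatorial input. The only difference is that the paper proves the ordering property directly, by isometrically embedding $A_0$ into an expansion-invariant space whose tree of precise balls is full over all levels of $D_{A_0}$ with uniform branching $n$. Your recursive sketch becomes exactly this once all branching classes at a node are made interchangeable, so there is no real obstacle with levels and no need to appeal to the $\Iso(\U)$ result.
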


\begin{proof}
We apply Theorem \ref{examen}(2) and work of  Kechris–Pestov–Todorčević \cite{KPT} (Theorems 4.8 and 10.8), which provide a description of the universal minimal flow. The
\cite[Theorem 4.8]{KPT} implies that $\ultraf$ has Ramsey property, needed to apply \cite[Theorem 10.8]{KPT}. We have to check that the expansion $\ultraconf$ of $\ultraf$
is reasonable (for any embedding $f\colon A_0\to B_0$ in $\ultraf$ and an expansion $A\in\ultraconf$ of $A_0$ there is an expansion $B\in\ultraconf$ of $B_0$ such that $f\colon A\to B$ is an embedding in $\ultraconf$), and has the ordering property (for any $A_0\in \ultraf$ there is a $B_0\in \ultraf$ such that for all expansions 
$A\in\ultraconf$ of $A_0$ and  $B\in\ultraconf$ of $B_0$, $A$ embeds into $B$). 

Note that reasonability of the expansion is exactly star-surjectivity of the forgetful functor $F^\prec_\fin\maps \Ult^\prec_\fin \to \Ult_\fin$, which we already proved in Lemma~ \ref{thm:forget_convex}.
As observed in \cite[Section 2.3.2]{Nmem}, the ordering property follows from reasonability and from the fact that every $A_0 \in \Ult_\fin$ can be embedded in an \emph{expansion-invariant} $B_0 \in \Ult_\fin$ (meaning that all expansions $B \in (F^\prec_\fin)^{-1}(B_0)$ are isomorphic).
In $\Ult_\fin$ expansion-invariant spaces correspond to regularly branching leveled meet trees $T \in \LevTree^0_\fin$, i.e. every $t, t' \in T$ on the same level have the same number of successors.
It is easy to see that every $A_0$ isometrically embeds into an expansion-invariant $B_0$ with $D_{B_0} = D_{A_0}$: take $n$ the maximal number of immediate successors appearing in the corresponding tree $\Beebar_{A_0}$ and let $B_0$ be the space whose tree is such that the number of immediate successors of every non-maximal vertex is equal to $n$.
\end{proof}

\subsection{Katětov functors and universality}

While the Fraïssé limit $U$ is universal for the class $\sigma\C_\fin$ of countable structures, its automorphism group $\Aut(U)$ can also be (topologically) universal for the collection of Polish groups $\{\Aut(X) : X \in \sigma\C_\fin\}$.
Furthermore, this universality is sometimes realized via an embedding of $X$ into $U$.
Specifically, an embedding $\phi\maps \Aut(X) \to \Aut(U)$ can be an extension operator with respect to an embedding $e\maps X \to U$: we have $\phi(f) \cmp e = e \cmp f$ for every $f \in \Aut(X)$.
Such extension operators can be obtained using \emph{Katětov functors}.

Let $\C$ be a $\sig$-complete category of structures and suppose that $\C_\fin$ is generated by a family of morphisms $\T \subs \C_\fin$, i.e. every $\C_\fin$-map is a finite composition of $\T$-maps.
Typically, $\T$ consists of all one-point extensions.

A \emph{(small) Katětov functor} is an endofuctor $F\maps \C_\fin \to \C_\fin$ together with a natural transformation $\eta\maps \id{\C_\fin} \to F$ such that for every $\C_\fin$-object $X$, $\eta_X$ absorbs $\T(X, {\to})$, i.e. for every $\T$-map $f\maps X \to Y$ there is a $\C_\fin$-map $g\maps Y \to F(X)$ such that $g \cmp f = \eta_X$.

If $\C_\fin$ has a Katětov functor $F$, then $\C_\fin$ has the amalgamation property, and (if $\C$ is also directed) $X \to^{\eta_X} F(X) \to^{\eta_{F(X)}} F^2(X) \to \cdots$ is a Fraïssé sequence for every $\C_\fin$-object $X$.
Moreover, the colimit $U = F^\omega(X) \in \sig\C_\fin$ is the Fraïssé limit, and for every $\C_\fin$-map $f\maps X \to Y$ we have a functorial $\sig\C_\fin$-extension $F^\omega(f)\maps F^\omega(X) \to F^\omega(Y)$.
In particular, for every $\C_\fin$-object $Y$ we have a (topological) group embedding $\Aut(Y) \to \Aut(F^\omega(Y)) \cong \Aut(U)$.
Moreover, $F$ admits an essentially unique $\sig$-continuous extension $\sig\C_\fin \to \sig\C_\fin$, and we can obtain $U$ also as the colimit of $X \to^{\eta_X} F(X) \to^{\eta_{F(X)}} F^2(X) \to \cdots$ for every $\sig\C_\fin$-object $X$.
Similarly, for every $\sig\C_\fin$-object $Y$ we have a (topological) group embedding $\Aut(Y) \to \Aut(U)$.
For more details see \cite{KubMas}.

\begin{ex}
There is a Katětov functor $F_\Lin\maps \Lin_\fin \to \Lin_\fin$.
It is easy to see that for every linear order $X = \set{x_0 < x_1 < \cdots < x_{k - 1}}$ the inclusion $\eta_X\maps X \to F_\Lin(X) := \set{x'_0 < x_0 < x'_1 < x_1 < \cdots < x'_{k - 1} < x_{k - 1} < x'_k}$ absorbs every one-point extension of $X$.
For an embedding $f\maps X \to Y$ we put $F_\Lin(f)(x_i) = f(x_i) =: y_{g(i)}$ so that $F_\Lin(f) \cmp \eta_X = \eta_Y \cmp f$, and $F_\Lin(f)(x'_i) := y'_{g(i)} = \max\set{y \in Y: y < x$ for every $x > x'_i}$ for $i < k$ and $F_\Lin(f)(x'_k) := \max(Y)$.
This definition turns $F_\Lin$ into a functor.

Similarly we obtain a Katětov functor $F_{\Lin^0}\maps \Lin^0_\fin \to \Lin^0_\fin$.
\end{ex}

Next we consider the category $\Ult$ of two-sorted ultrametric spaces and dc-embeddings.
For $\T$ we take the class of all one-point extensions of finite spaces, i.e. of all dc-embeddings that either add a new distance while keeping the same set of points, or add a new point without introducing new distances.
Clearly, $\T$ generates $\Ult_\fin$.

\begin{prop} \label{thm:Katetov}
	There is a Katětov functor $F\maps \Ult_\fin \to \Ult_\fin$.
\end{prop}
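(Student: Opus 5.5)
We construct $F$ on a space $X \in \Ult_\fin$ in two stages: first we enlarge the distance set with $F_{\Lin^0}$, then we add, for every possible one-point extension type, a new point realizing it, and we glue all new points canonically. Concretely, put $D_{F(X)} = F_{\Lin^0}(D_X)$. This is $D_X$ with a fresh element $r'$ inserted immediately below each positive $r \in D_X$ and a fresh maximum added; we write $r'_0$ for the new element just above $0$ (i.e.\ below the least positive distance of $X$). A one-point extension that adds a point $a$ is determined, up to isomorphism over $X$, by its \emph{Katětov function} $k(x) = d(a,x)$. By the ultrametric inequality, $k$ has values in $D_X \setminus \set{0}$ and satisfies $d(x,y) \leq \max\set{k(x),k(y)}$ and $k(x) \leq \max\set{k(y), d(x,y)}$. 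Let $E(X)$ be the finite set of all such functions. We set the point sort of $F(X)$ to be $X \sqcup E(X)$, with $d(x,k) = k(x)$ and
\[
	d(k,l) = \max\set{k(x),l(x) : k(x) \neq l(x)} \ \text{ if } k \neq l \text{ on some point},
\]
where the nonempty-disagreement case is forced by Lemma~\ref{thm:one_point_amalgamation}. The remaining case $k = l$ does not occur since the $k$'s are distinct as functions. For $X = \emptyset$ we take $E(X) = \set{k_\emptyset}$, a single point. A one-point extension adding a distance is absorbed by $D_{F(X)}$ directly, since $F_{\Lin^0}$ absorbs one-point extensions of $\Lin^0_\fin$; this also covers extensions where a point appears with a new distance, because $\T$ separates these two kinds. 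Checking that $X \cup E(X)$ is an ultrametric space reduces to triangles with two or three new points. This is the standard Katětov-space computation for ultrametrics: it follows from the uniqueness part of Lemma~\ref{thm:one_point_amalgamation} applied pairwise, together with a direct check of triangles $k,l,m$, using that the disagreement maxima behave like an ultrametric on functions.

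Next I define $F$ on morphisms. Given a dc-embedding $f\maps X \to Y$, we let $D_{F(f)} = F_{\Lin^0}(D_f)$. On points we put $F(f)(x) = f(x)$ for $x \in X$. For $k \in E(X)$ we need a canonical extension $\hat k \in E(Y)$ of the transported function $D_f \cmp k \cmp f^{-1}$ on $f[X]$. The classical choice is
\[
	\hat k(y) = \min_{x \in X} \max\set{D_f(k(x)), d(y, f(x))},
\]
which takes values in $D_Y$. This is where the mismatch between $D_f$ and $F_{\Lin^0}(D_f)$ must be handled: we must verify that $\hat k$ is a Katětov function over $Y$ and that $F(f)$ is a dc-embedding with respect to $F_{\Lin^0}(D_f)$. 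The dc-embedding part must hold already on $X \cup E(X)$, where all distances are old, so only $D_f$ matters there. Empty $X$ is special: $k_\emptyset$ must map to the function constantly equal to the maximal new distance of $F_{\Lin^0}(D_Y)$. To accommodate this uniformly, I would instead allow Katětov functions to take values in $D_{F(X)}$. One then restricts to those functions that are either old-valued or constantly the top new distance, and similarly redefines $\hat k$ with $F_{\Lin^0}(D_f)$ in place of $D_f$.

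The main obstacle is functoriality, $F(g \cmp f) = F(g) \cmp F(f)$, which amounts to $\widehat{\hat k} = \hat k$ computed along $g \cmp f$. Here the $\min$-$\max$ formula helps. We have
\[
	\min_{y}\max\Bigl\{\min_x \max\bigl\{D_gD_f k(x),\, d(gy, gfx)\bigr\},\ d(z, gy)\Bigr\},
\]
and by the ultrametric inequality the minimum over $y$ is attained at $y = f(x)$, which gives the formula along $g\cmp f$ directly. Naturality of $\eta_X$ (the inclusion $X \subs F(X)$) is immediate. Absorption of $\T$-maps follows: a point extension $X \cup \set{a}$ maps into $F(X)$ by sending $a \mapsto d(a,\cdot)$, and a distance extension is handled by $F_{\Lin^0}$. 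Injectivity of $F(f)$ on the new points, together with the fact that it preserves the prescribed distances $d(k,l)$, is the step I would check most carefully.
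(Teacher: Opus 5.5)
Your construction is correct, and it packages the same idea differently from the paper. The paper takes $F = H \cmp G$. Here $G$ enlarges only the distance set by the Katětov functor $L$ of $\Lin^0_\fin$. Then $H(X) = (X \times D_X)/{\sim}$ codes a one-point type by a pair $\seq{x, r}$ (a nearest point and the nearest distance), with pseudo-ultrametric $0$ if $r = r'$ and $d(x,x') \leq r$, and $\max\set{d(x,x'), r, r'}$ otherwise. On morphisms it is simply $\seq{x, r} \mapsto \seq{f(x), D_f(r)}$.

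Your Katětov function $k$ is the type $\seq{x_0, k(x_0)}$ for any minimizer $x_0$ of $k$, since $k = \max\set{d(\cdot, x_0), k(x_0)}$. Your min--max extension of such a $k$ along $f$ equals $\max\set{d(\cdot, f(x_0)), D_f(k(x_0))}$, which is exactly the paper's $\seq{f(x_0), D_f(k(x_0))}$. So the two functors agree on old-valued types; the paper also adds types at the new distances of $L(D_X)$, which is harmless.

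What the paper's coding buys is that functoriality and the dc-embedding property of $F(f)$ are immediate. You instead need the min--max computation, which you did correctly, and the distance check you flagged. That check goes through. If $k(x_0) \neq l(x_0)$, then $\hat k(f(x_0)) \neq \hat l(f(x_0))$, which gives injectivity. By the forcing in Lemma~\ref{thm:one_point_amalgamation}, every disagreement point yields the same value $\max\set{k(x), l(x)}$, so evaluating at $f(x_0)$ gives $d(\hat k, \hat l) = D_f(d(k, l))$.

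What your version buys is the empty space. The paper's argument tacitly assumes $X \neq \emptyset$: its $H(G(\emptyset))$ has no points, so the extension $\emptyset \to \set{a}$ is not absorbed as written. Your constant-$\top$ type handles this case functorially, because $L(D_g)$ sends the new top to the new top, and the min--max extension of a constant-$\top$ function is again constant $\top$.
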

\begin{proof}
    We proceed in two steps.
    First, we consider a functor $G\maps \Ult_\fin \to \Ult_\fin$ accomodating all one-point extensions of the distance sets, i.e. for every finite space $\seq{X, D_X}$ we put $G(\seq{X, D_X})) = \seq{X, L(D_X)}$ and $\eta^G_X := \seq{\id{X}, \eta^L_{D_X}}\maps \seq{X, D_X} \to \seq{X, L(D_X)}$ where $L$ is a Katětov functor $\Lin^0_\fin \to \Lin^0_\fin$.
    For an $\Ult_\fin$-map $f\maps X \to Y$ we put $G(f) := \seq{f, L(D_f)}\maps \seq{X, L(D_X)} \to \seq{Y, L(D_Y)}$.
    It is easy to see that $G$ is a functor $\Ult_\fin \to \Ult_\fin$ and that $\eta^G\maps \id{\Ult_\fin} \to G$ is a natural transformation.
    
    Next, we define a functor $H\maps \Ult_\fin \to \Ult_\fin$ accommodating all one-point extensions of the space while not changing the distance set.
    Let $X$ be a finite space.
    For every $\seq{x, r}, \seq{x', r'} \in X \times D_X$ we put
    \[
        d(\seq{x, r}, \seq{x', r'}) := \begin{cases}
            0 & \text{if $r = r'$ and $d_X(x, x') \leq r$,} \\
            \max\set{d_X(x, x'), r, r'} > 0 & \text{otherwise.}
        \end{cases}
    \]
    Intutively, a pair $\seq{x, r}$ represents a one-point extension with the closest point $x$ at distance $r$.
    We show that $d$ is a $D_X$-valued pseudo-ultrametric on $X \times D_X$.
    Clearly, $d$ is symmetric and $d(\seq{x, r}, \seq{x, r}) = 0$.
    We show \[
        q := d(\seq{x, r}, \seq{x'', r''}) \leq \max\set{d(\seq{x, r}, \seq{x', r'}), d(\seq{x', r'}, \seq{x'', r''})} =: \max\set{q_1, q_2}.
    \]
    If $q_1 = 0 = q_2$, we have $r = r' = r''$ and $d(x, x'') \leq \max\set{d(x, x'), d(x', x'')} \leq r$, and so $q = 0$.
    If $q_1 \neq 0 \neq q_2$, we have 
    \[
        q \leq \max\set{d(x, x''), r, r''} \leq \max\set{d(x, x'), d(x', x''), r, r', r''} = \max\set{q_1, q_2}.
    \]
    If without loss of generality $q_1 = 0 \neq q_2$, then $d(x, x') \leq r = r'$ and so 
    \[
        q \leq \max\set{d(x, x''), r, r''} \leq \max\set{d(x, x'), d(x', x''), r, r''} = \max\set{d(x', x''), r', r''} = q_2.
    \]
    
    Let $X' := (X \times D_X)/{\sim_d}$ be the induced $D_X$-valued ultrametric space.
    We put $H(X) := X'$, and for any $\Ult_\fin$-map $f\maps X \to Y$ we put $H(f)(\seq{x, r}) := \seq{f(x), D_f(r)}$.
    This is a valid definition of a functor $H\maps \Ult_\fin \to \Ult_\fin$.
    We have 
    \begin{gather*}
        d_X(x, x') \leq r = r' \text{ if and only if }d_Y(f(x), f(x')) \leq D_f(r) = D_f(r'), \\
        D_f(\max\set{d_X(x, x'), r, r'}) = \max\set{d_Y(f(x), f(x')), D_f(r), D_f(r')}.
    \end{gather*}
    Therefore, $H(f)$ is well-defined on the pseudo-ultrametric spaces $X \times D_X \to Y \times D_Y$, and hence induces a dc-embedding $X' \to Y'$ with $D_{H(f)} = D_f$.
    Moreover, we have an isometric embedding $\eta^H_X\maps X \to X'$ defined by $\eta^H_X(x) = \seq{x, 0}$, and these embeddings together form a natural transformation $\eta^H\maps \id{\Ult_\fin} \to H$.
    
    Finally, we put $F := H \cmp G$ and $\eta^F := \eta^H_G \cmp \eta^G$ (i.e. the horizontal composition of $\eta^H$ and $\eta^G$).
    It remains to show that $F\maps \Ult_\fin \to \Ult_\fin$ is a Katětov functor.
    Consider a finite ultrametric space and its one-point extension (without loss of generality an inclusion) $i\maps X \to Y$.
    We have either $Y = X$ and $D_Y = D_X \cup \set{q}$, or $Y = X \cup \set{y}$ and $D_Y = D_X$.
    In the first case, there is an $\Lin^0_\fin$-map $g\maps D_Y \to L(D_X)$ such that $g \cmp D_i = \eta^L_{D_X}$ since $L$ is a Katětov functor $\Lin^0_\fin \to \Lin^0_\fin$, and so for $f = \seq{\id{X}, g}$ we have $f \cmp i = \eta^G_X$.
    Hence, $\eta^H_{G(X)} \cmp f \cmp i = \eta^F_X$.
    
    In the second case, let $r_0 := \min\set{d_Y(x, y): x \in X} \in D_X \setminus \set{0}$ and let $x_0 \in X$ be any point such that $d_Y(x_0, y) = r_0$.
    Then $f\maps Y \to F(X)$ defined by $f(x) := \eta^F_X(x)$ for $x \in X$, $D_f(r) := \eta^F_X(r)$ for $r \in D_X$, and $f(y) := \seq{x_0, L(r_0)} \in H(G(X))$ clearly satisfies $f \cmp i = \eta^F_X$.
    It remains to show that $f$ is a dc-embedding.
    It is enough to check that $q_1 := d_{F(X)}(f(x), f(y)) = D_f(d_Y(x, y)) =: q_2$ for every $x \in X$.
    We have 
    \[
        q_1 = d_{F(X)}(\seq{x, 0}, \seq{x_0, L(r_0)}) = \max\set{d_{G(X)}(x, x_0), L(r_0)} = L(\max\set{d_X(x, x_0), r_0}).
    \]
    By the definition of $x_0$ and $r_0$ and by the ultrametric triangle inequality, we have $d_Y(x, y) = \max\set{d_X(x, x_0), d_X(x_0, y)}$, and hence $q_2 = D_f(\max\set{d_X(x, x_0), r_0)}) = q_1$.
\end{proof}

\begin{wn}\label{wn-katetov}
     There is a topological embedding $\Aut(X) \to \Aut(\U)$ for every countable ultrametric space $X$.
\end{wn}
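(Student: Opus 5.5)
The plan is to deduce the statement from Proposition~\ref{thm:Katetov} together with the general theory of Katětov functors recalled above (following \cite{KubMas}). By Proposition~\ref{thm:Katetov} we have a Katětov functor $F\maps \Ult_\fin \to \Ult_\fin$ with $\eta\maps \id{\Ult_\fin} \to F$. The first step is to check that the general machinery applies to this category. Every countable ultrametric space $X$ lies in $\sig\Ult_\fin$, since it is the union of an increasing chain of finite subspaces, each taken together with the finite set of distances it realizes. The category $\Ult$ is $\sig$-complete. It is also directed, since the empty space $\seq{\emptyset,\set{0}}$ embeds into everything.

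Next, I will extend $F$ to a $\sig$-continuous functor $F\maps \sig\Ult_\fin \to \sig\Ult_\fin$. Write $X$ as the colimit of a chain $(X_n, f_n)$ of finite spaces, and set $F(X) = \operatorname{colim} F(X_n)$. Naturality of $\eta$ gives an embedding $\eta_X\maps X \to F(X)$. The iterated colimit $F^\omega(X)$ of $X \to F(X) \to F^2(X) \to \cdots$ is then the Fraïssé limit of $\Ult_\fin$, and by Theorem~\ref{thm:U_Fraisse} it is dc-isomorphic to $\U$. The reason is that the sequence obtained by diagonalizing the finite pieces $F^k(X_n)$ is Fraïssé: it is absorbing because every $\eta_{F^k(X_n)}$ absorbs all one-point extensions, and these generate $\Ult_\fin$. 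By functoriality, each $f\in\Aut(X)$ yields $F^\omega(f)\in\Aut(F^\omega(X))$. The assignment $f\mapsto F^\omega(f)$ is a homomorphism, and it is injective because $F^\omega(f)$ extends $f$ along the embedding $X\to F^\omega(X)$.

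The main obstacle is continuity of this map and of its inverse on its image, because $\Aut$ carries the discrete pointwise topology in both sorts. For the inverse, the restriction to the copy of $X$ recovers $f$, including the distance part $D_f$, since $D_{F^\omega(f)}$ extends $D_f$. For continuity, the key is that every element of $F^\omega(X)$, whether a point or a distance, lies in the image of some finite stage $F^k(X_n)$. The action of $F^\omega(f)$ on that element is determined by $F^k$ applied to the restriction of $f$ to a finite subspace $A$, with $D_A$ containing $D_{X_n}$ and the images. Concretely, for $F = H\cmp G$, a point of $F(A)$ is a class $\seq{x,r}$ with $x\in A$ and $r$ in $L(D_A)$, and $L(D_f)$ depends only on $D_f\restr{D_A}$. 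So fixing $f$ on a finite subspace fixes $F^\omega(f)$ on the given finitely many elements, which gives the required topological embedding.
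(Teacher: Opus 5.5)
Your proposal is correct and is essentially the paper's argument. The paper derives the corollary directly from Proposition~\ref{thm:Katetov} and the general Katětov-functor theory it recalls from \cite{KubMas}: the $\sig$-continuous extension of $F$, $F^\omega(X)\cong\U$, and $f\mapsto F^\omega(f)$ as the embedding. Your continuity check through the finite stages $F^k(X_n)$ is an explicit unpacking of that theory.

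Two small repairs are needed. First, the finite substructures exhausting $X$ must eventually contain every element of $D_X$, not only the realized distances; otherwise, for non-precise $X$ such as $\seq{\set{*},\Qpos}$, the colimit is not $X$ and injectivity fails. Second, the paper's $F$ sends $\seq{\emptyset,D}$ to a space with no points, so $\eta$ does not absorb the one-point extension of the empty space. Take the stages nonempty, and handle $X=\seq{\emptyset,D_X}$ by passing to $\seq{\set{*},D_X}$, which has the same automorphism group.
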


The analogous statement for the completion remains open.
\begin{question} \label{que:complete_aut_universal}
    Is there a topological embedding $\AutM(X) \to \AutM(\clo{\U})$ for every separable ultrametric space $X$?
\end{question}


\end{document}